\title{The Quantum Bruhat Graph for $\widehat{SL}_2$ and Double Affine Demazure Products}
\author{Lewis Dean}
\newtheorem{defn}{Definition}[section]
\newtheorem{thm}[defn]{Theorem}
\newtheorem{prop}[defn]{Proposition}
\newtheorem{cor}[defn]{Corollary}
\newtheorem{conj}[defn]{Conjecture} 
\theoremstyle{definition}
\theoremstyle{remark}
\newtheorem{rmk}[defn]{Remark}
\newcommand{\Tits}{\mathcal{T}}
\newcommand{\WTits}{W_\mathcal{T}}
\newcommand{\lev}{\textrm{\normalfont lev} }
\newcommand{\Phifin}{\Phi_{\textrm{\normalfont fin}}}
\newcommand{\Deltafin}{\Delta_{\textrm{\normalfont af}}}
\newcommand{\Wfin}{W_\textrm{\normalfont fin}}
\newcommand{\Waff}{W_\textrm{\normalfont aff}}
\newcommand{\Pfin}{P_\textrm{\normalfont fin}}
\newcommand{\Pdom}{P_\textrm{\normalfont dom}}
\newcommand{\half}{\frac{1}{2}}
\newcommand{\Inv}{\textrm{\normalfont Inv}}
\newcommand{\LP}{\textrm{\normalfont LP}}
\newcommand{\QBG}{\textrm{\normalfont QBG}}
\newcommand{\wt}{\textrm{\normalfont wt}}
\newcommand{\height}{\textrm{\normalfont ht}}
\newcommand{\sgn}{\textrm{\hspace{0.5mm}\normalfont sgn}}
\newcommand{\vbls}[1][1]{\vspace{{#1}\baselineskip}}
\newcommand{\numberthis}{\addtocounter{equation}{1}\tag{\theequation}}
\numberwithin{equation}{section}
\begin{document}

\begin{abstract}
We investigate the Demazure product in a double affine setting. Work by Muthiah and Pusk\'as gives a conjectural way to define this in terms of the $q=0$ specialisation of these Hecke algebras. We instead take a different approach generalising work by Felix Schremmer, who gave an equivalent formula for the (single) affine Demazure product in terms of the quantum Bruhat graph. We focus on type $\widehat{SL}_2$, where we prove that the quantum Bruhat graph of this type satisfies some nice properties, which allows us to construct a well-defined associative Demazure product for the double affine Weyl semigroup $\WTits$ (for level greater than one). We give results regarding the Demazure product and Muthiah and Orr's length function for $\WTits$, and we verify that our proposal matches specific examples computed by Muthiah and Pusk\'as using the Kac-Moody affine Hecke algebra.
\end{abstract}

\maketitle

\section{Introduction}
\subsection{Overview}

Affine Kac-Moody geometry has become a topic of great interest over recent years. Braverman, Kazhdan and Patnaik first constructed the Kac-Moody affine Hecke algebra \cite{BKP15} and gave it a $T$-basis indexed by a certain affine extension of the Weyl group, generalised further by Bardy-Panse, Gaussent and Rousseau \cite{BPGR16}. In the case that the underlying Weyl group is itself affine, this extension is called the \textit{double affine Weyl semigroup}, $\WTits$. 

$\WTits$ fails to be a Coxeter group, however much progress has been made in giving it Coxeter-like structures. Braverman, Kazhdan and Patnaik proposed a certain Bruhat order for $\WTits$ {\cite[Section~B]{BKP15}}, which was studied by Muthiah \cite{Mut18} who gave an equivalent definition in terms of a particular length function. This length function was further studied by Muthiah and Orr \cite{Muthiah-Orr19}, who also studied covers and cocovers in type ADE, further studied by Welch \cite{Welch19} and generalised to all types by Philippe \cite{philippe24}.

\vbls

Another structure of interest, which arises from Coxeter theory, is the Demazure product on a Weyl group $W$. This product is defined via the Bruhat order, but also describes multiplication in certain Kac-Moody Hecke algebras. It has many other interesting applications, describing the closure of the product of Iwahori double cosets \cite{He-Nie21}, as well as having uses in affine Deligne-Lusztig varieties \cite{He21}.

Recently, Muthiah and Pusk\'as have conjectured the existence of a Demazure product in $\WTits$ \cite{Muthiah-Puskas24}, arising from the corresponding Kac-Moody affine Hecke algebra. Our goal in this paper is to study an alternative approach to define this product, generalising work by Schremmer \cite{SchremABODP} to the double affine setting. This method relies on the (affine) quantum Bruhat graph, studied by Welch \cite{Welch19} for its uses with the Bruhat order, who generalised work by Milicevic \cite{Milicevic21}.

In this paper, we focus on $\WTits$ of type $\widehat{SL}_2$ and achieving results on the corresponding quantum Bruhat graph, and we end by matching up examples with those computed by Muthiah and Pusk\'as \cite{Muthiah-Puskas24}.

\subsection{Demazure Products}

First, we recall the combinatorial definition of the Demazure product. Consider the Coxeter group of either finite or affine type $W$ with simple roots $S$, length function $\ell(\cdot)$, and Bruhat order $\leq$. The \textit{Demazure product} of $x, y$, denoted $x*y$, is defined to be the maximal element with respect to the Bruhat order of the form $x'y'$, where $x' \leq x$ and $y' \leq y$. It has an equivalent iterative definition in terms of the simple reflections (cf. Equation \ref{eq:dem-def}). 

\vbls

The Demazure product also arises as the $q = 0$ specialisation of the Hecke algebra. Fix a $T$-basis $\{T_x\}_{x \in W}$ of the Hecke algebra, where $W$ is of finite or affine type. We then have the following:

\vbls[-0.5]

\begin{equation}\label{eqn:hecke-prod-mod}
    T_x T_y \equiv (-1)^{\ell(x) + \ell(y) - \ell(x*y)}T_{x*y}\ (\textup{mod}\ q), \quad x,y \in W
\end{equation}

In the affine case, this $T$-basis arises from the realisation of $W$ as a Coxeter group for some affine root system, however we can also write $W = \Wfin \ltimes \mathbb{Z}\Phi^{\vee}$, a semidirect product of the corresponding finite Weyl group $\Wfin$ with the coweight lattice $\mathbb{Z}\Phi^{\vee}$, which gives a Bernstein-type presentation.

\subsection{Kac-Moody Affine Hecke Algebras}

In the last few years, there has been significant progress in constructing affinisations of Kac-Moody algebras arising from considering the Kac-Moody group $G$ over $p$-adic fields. Let $G$ be an (untwisted) affine Kac-Moody group over a $p$-adic field $F$ with Weyl group $W$, now fixed to be of affine type. Let $G^+$ be the semigroup of $G$ where the Cartan decomposition holds. The full Kac-Moody affine Hecke algebra $\hat{\mathcal{H}}$ is the convolution algebra of $\mathbb{C}$-valued functions of $I \backslash G^+ / I$, defined by Braverman, Kazhdan and Patnaik \cite{BKP15} for $G$ untwisted affine, and by Bardy-Panse, Gaussent and Rousseau \cite{BPGR16} in the general case. 

They showed that there is a basis $\{T_x\}_{x \in \WTits}$, where the indexing set is the double affine Tits semigroup $W_\mathcal{T} = W \ltimes \mathcal{T}$, with respect to the Tits cone $\mathcal{T} = \bigcup_{w \in \Waff} wX$, where $X$ is the set of dominant coweights of $G$.

Note that, since $G$ is already affine, the Kac-Moody affine Hecke algebra is in fact double affine. We use the terms Kac-Moody affine and double affine synonymously when $G$ is itself affine. If instead $G$ were finite, then the corresponding Tits semigroup is exactly the affine Weyl group.

\vbls

For $T_x, T_y \in \hat{\mathcal{H}}$, we can write the product as:

\begin{equation}
    T_x T_y\ =\ \sum_{z \in W_\mathcal{T}} c^{z}_{x, y} T_z,\quad c_{x,y}^z \in \mathbb{Z}_{\geq 0} 
\end{equation}

Bardy-Panse, Gaussent and Rousseau \cite{BPGR16}, and independently Muthiah \cite{Mut18}, proved a conjecture by Braverman, Kazhdan and Patnaik \cite{BKP15}that the structure constants $c_{x,y}^z$ are integer-coefficient polynomials in $q$.

In their recent preprint \cite{Muthiah-Puskas24}, Muthiah and Pusk\'as further conjecture the following, a generalisation of equation \ref{eqn:hecke-prod-mod}:

\begin{conj}[Muthiah, Pusk\'as {\cite[Conjecture~7.3]{Muthiah-Puskas24}}]\label{conj:mp}
    There is exactly one coefficient $c_{x,y}^z \in \mathbb{Z}[q]$ in the product $T_xT_y$ which is non-zero modulo $q$. In particular, $T_x T_y \equiv (-1)^{\ell(x) + \ell(y) - \ell(z)}T_z\ (\text{\textup{mod}}\ q)$ for some $z \in W_\mathcal{T}$.
\end{conj}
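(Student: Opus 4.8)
The plan is to reduce Conjecture \ref{conj:mp}, in type $\widehat{SL}_2$, to a purely combinatorial assertion about a candidate Demazure product on $\WTits$, following the strategy Schremmer used in the single affine case, and then to match this product against the $q = 0$ specialisation of the Kac-Moody affine Hecke algebra. First I would introduce a candidate product $x * y$ on $\WTits$ defined through shortest paths in the affine quantum Bruhat graph $\QBG$, arranged so that the length defect $\ell(x) + \ell(y) - \ell(x*y)$ can be read off from such a path and its parity produces the sign $(-1)^{\ell(x)+\ell(y)-\ell(z)}$. The first block of work is to establish the structural properties of $\QBG$ for $\widehat{SL}_2$ — a weight-additivity property for concatenated shortest paths together with a diamond/exchange property — and to use these to prove that $x * y$ is well-defined and associative for level greater than one. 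This is precisely the package of results the paper advertises, and it is the engine that makes an \emph{iterative} construction of the product consistent.

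With the combinatorial product in hand, I would translate the statement through the Hecke algebra relations. Working modulo $q$, the quadratic relation degenerates to $T_s^2 \equiv -T_s$, so for a simple reflection $s$ one has $T_s T_x \equiv T_{sx}$ when the length increases and $T_s T_x \equiv -T_x$ when it decreases; in both cases the surviving index is $s * x$ and the coefficient is exactly $(-1)^{\ell(s)+\ell(x)-\ell(s*x)}$. The aim is then to propagate this single-generator identity along a Bernstein-type presentation of $\WTits$, writing a general element as a finite part times a translation and peeling off factors one at a time. The associativity secured in the first block is what guarantees the recursion is independent of the order in which factors are removed, so that $T_x T_y$ collapses modulo $q$ to the single term $T_{x*y}$ with the predicted coefficient.

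The principal obstacle is that $\WTits$ is not a Coxeter group, so there is no reduced-word calculus and no \emph{a priori} control over the structure constants $c_{x,y}^z$ across the full double affine range; in particular the Bernstein commutation relations are not available in closed form in the Kac-Moody setting. I therefore expect the argument to be genuinely two-sided: the combinatorial product must be shown to obey the same recursion that the Hecke relations impose modulo $q$, and the two descriptions must be anchored at enough base cases for the induction to close. Since a complete verification of every structure constant is beyond the reach of current tools, the concluding step is to confirm the proposal against the explicit products $T_x T_y$ computed by Muthiah and Pusk\'as, which simultaneously checks the combinatorial definition and provides evidence that the single-term, signed-coefficient phenomenon of Conjecture \ref{conj:mp} persists beyond the cases the structural results cover directly.
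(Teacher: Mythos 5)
The statement you are asked to prove is labelled a \emph{conjecture} in the paper, and the paper does not prove it: it is Conjecture 7.3 of Muthiah--Pusk\'as, quoted verbatim, and the paper's actual contribution is to sidestep it by constructing an independent candidate for the Demazure product on $\WTits$ via the quantum Bruhat graph, proving that candidate is well-defined and associative in type $\widehat{SL}_2$ for level greater than one, and then checking in Section \ref{sec:examples} that it agrees with the Hecke-algebra computations in the two examples Muthiah and Pusk\'as worked out. Your proposal reconstructs essentially this same program (QBG structural lemmas, well-definedness, associativity, comparison with examples), which is fine as a description of the paper's strategy, but it is presented as a proof outline for the conjecture, and it is not one.

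The genuine gap is the second block of your argument: the claim that the single-generator identity $T_sT_x \equiv \pm T_{s*x} \pmod q$ can be ``propagated along a Bernstein-type presentation of $\WTits$, peeling off factors one at a time.'' This is exactly the step that is open. $\WTits$ is a semigroup, not a Coxeter group; a general basis element $T_x$ for $x \in \WTits$ does not factor as a length-additive product of $T_s$'s and translation elements in a way that reduces $T_xT_y$ to iterated applications of the rank-one relation, and the structure constants $c_{x,y}^z$ are only known to be integer polynomials in $q$ (Bardy-Panse--Gaussent--Rousseau, Muthiah) with no closed-form recursion. Associativity of the combinatorial product $*$, which you invoke as the engine that makes the recursion order-independent, does not by itself supply the missing identification of $*$ with the $q=0$ Hecke multiplication; that identification is precisely the content of the conjecture. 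You name this obstacle yourself and then resolve it by ``confirming the proposal against the explicit products computed by Muthiah and Pusk\'as,'' which is evidence, not proof. The honest conclusion is that neither your proposal nor the paper proves this statement; the paper correctly leaves it as a conjecture and offers the QBG product as a consistent candidate.
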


\subsection{Schremmer's Work and the Quantum Bruhat Graph}

Conjecture \ref{conj:mp} gives one candidate for the Demazure product, but we instead take another approach of generalising work by Schremmer, who gave a more explicit formula for calculating the affine Demazure product in the extended affine Weyl group $\widehat{W}$. This approach relies on the (finite) quantum Bruhat graph and the notion of \textit{length positivity}. These concepts are easier to define and to work with than the Kac-Moody affine Hecke algebra, and generalise very naturally to the double-affine case. We state Schremmer's formula for the Demazure product below (Thm. \ref{schremform} in the main text).

\vbls

\begin{thm}
    Fix a (finite) Weyl group $W$ with coweight lattice $Q^{\vee}$. Construct the extended affine Weyl group $\widehat{W} = W \ltimes Q^{\vee}$, and let $x = w_x \varepsilon^{\mu_x}, y = w_y \varepsilon^{\mu_y} \in \widehat{W}$, where $w_x, w_y \in W$, $\mu_x, \mu_y \in Q^{\vee}$. Choose a pair of length positive elements $(u, v) \in W \times W$ such that $d(u \Rightarrow w_yv)$ is minimal amongst all such pairs, with some shortest path $p : u \Rightarrow w_yv$ in the quantum Bruhat graph $\QBG(W)$. Then the Demazure product $x*y$ is given as
    \begin{equation}\label{eqn:schremdeminit}
        x * y := w_x u v^{-1} \varepsilon^{vu^{-1} \mu_x + \mu_y - v\wt(p)}.
    \end{equation}
\end{thm}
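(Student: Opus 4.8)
The plan is to prove the two comparisons that together pin down the Demazure product. Writing $z := w_x u v^{-1}\varepsilon^{vu^{-1}\mu_x + \mu_y - v\wt(p)}$ for the proposed element, I would first show that $z$ is itself of the form $x'y'$ with $x' \leq x$ and $y' \leq y$, so that $z \leq x*y$ by the maximality defining $x*y$; and then show that every product $x'y'$ with $x' \leq x$, $y' \leq y$ satisfies $x'y' \leq z$, forcing $x*y \leq z$ and hence equality. The two tools I would build the argument on are an absolute-value-free length formula valid for length positive pairs, and the characterisation of the Bruhat order on $\widehat{W}$ in terms of paths in $\QBG(W)$ (as used by Welch, generalising Milićević and Lam--Shimozono). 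As a sanity check, when $d(u \Rightarrow w_yv)=0$ one has $u = w_yv$ and $p$ trivial, and a short computation collapses the formula to the ordinary product $xy = w_xw_y\varepsilon^{w_y^{-1}\mu_x+\mu_y}$; this is the regime where multiplication is already length additive and $x*y=xy$.

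First I would record the length computation. For a length positive pair $(u,v)$, multiplication on the right by $u$ (for $x$) and by $v$ (for $y$) in these privileged directions is length additive, and the weight of a path in the quantum Bruhat graph measures exactly the failure of additivity when one composes the two directions. The target identity is
\begin{equation}
    \ell(z) = \ell(x) + \ell(y) - 2\, d(u \Rightarrow w_y v),
\end{equation}
so that minimising the quantum Bruhat graph distance over length positive pairs coincides with maximising $\ell(z)$. This identity is what forces the correction $-v\wt(p)$ in the coweight part: a shortest path $p$ uses the minimal number of quantum (down) steps, and their coroot weights, transported by $v$, are precisely the translation defect.

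For the lower bound $z \leq x*y$, I would reconstruct $x'$ and $y'$ from the shortest path $p : u \Rightarrow w_yv$. Each edge of $p$ corresponds to a reflection, and reading these reflections off against $x$ and $y$ produces $x' \leq x$ and $y' \leq y$ (obtained by deleting the appropriate inversions) with $x'y' = z$, length positivity of $(u,v)$ guaranteeing that these deletions genuinely descend in the Bruhat order. For the upper bound I would argue contrapositively: given arbitrary $x' \leq x$, $y' \leq y$, the subword/lifting description of $\leq$ on $\widehat{W}$ yields a path in $\QBG(W)$ connecting the direction of $x'$ to that of $w_yv$ of length at least $d(u \Rightarrow w_yv)$, and the displayed length formula then gives both $\ell(x'y') \leq \ell(z)$ and $x'y' \leq z$.

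I expect the main obstacle to be the upper bound, specifically proving that length positivity makes $(u,v)$ optimal: one must rule out that some other, non length positive, direction yields a strictly longer product, and this is exactly where minimality of $d(u \Rightarrow w_yv)$ over \emph{length positive} pairs must be used rather than minimality over all of $W \times W$. The delicate bookkeeping is controlling the interplay between the quantum (wrap-around) edges, whose coroot weights feed the translation part, and the finite Bruhat order on directions; handling the extended (non-Coxeter) part of $\widehat{W}$, and setting up the associativity needed for an induction on $\ell(y)$ if the direct two-sided comparison proves unwieldy, are the secondary technical points.
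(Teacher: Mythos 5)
First, a point of orientation: the paper does not prove this statement itself. It is Schremmer's theorem (Thm.~5.11 and Prop.~5.12(b) of \cite{SchremABODP}), quoted as Theorem~\ref{schremform} in the main text, together with the remark that Schremmer proves it by analysing the Bruhat order on $\widehat{W}$ directly (via the quantum-Bruhat-graph description of that order) and deduces the uniqueness statements as corollaries. So your two-sided comparison against the defining maximum is the right general shape of argument, and your $d=0$ sanity check is correct. However, there are two concrete problems with the proposal as written.

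The central length identity you propose, $\ell(z)=\ell(x)+\ell(y)-2\,d(u\Rightarrow w_yv)$, has the wrong constant: the correct statement (Theorem~\ref{demlen} of the paper, and Schremmer's original) is $\ell(x*y)=\ell(x)+\ell(y)-d(u\Rightarrow w_yv)$. You appear to have imported the factor of $2$ from the formula $\ell(xy)=\ell(x)+\ell(y)-2\,|\Inv(x)\cap\Inv(y^{-1})|$ for the \emph{ordinary} product, but the quantum Bruhat graph distance is not that inversion count. The simplest example already falsifies your version: for $W=S_2$ and $x=y=s\varepsilon^0$ one has $\LP(x)=\{e\}$, $d(e\Rightarrow s)=1$, and $x*y=s$, so $\ell(x*y)=1=1+1-1$, whereas your formula would force $d=\tfrac12$. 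The correct derivation goes through $\langle\wt(w_1\Rightarrow w_2),2\rho\rangle=d(w_1\Rightarrow w_2)+\ell(w_1)-\ell(w_2)$ combined with $\ell(x)=\langle u^{-1}\mu_x,2\rho\rangle-\ell(u)+\ell(w_xu)$ for $u\in\LP(x)$ (Props.~5.3 and~\ref{lplen} in the paper); the Bruhat and quantum edges contribute to the defect asymmetrically, not at a uniform cost of $2$ per edge. The second problem is in your upper bound: from a length inequality $\ell(x'y')\le\ell(z)$ you cannot conclude $x'y'\le z$ in the Bruhat order, and the sentence ``the displayed length formula then gives both $\ell(x'y')\le\ell(z)$ and $x'y'\le z$'' elides exactly the hard step. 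Establishing $x'y'\le z$ for arbitrary $x'\le x$, $y'\le y$ requires the path-theoretic characterisation of the affine Bruhat order itself (this is where Schremmer's real work lies), not merely the length formula; as written, the proposal asserts the conclusion rather than proving it.
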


To explain what this means, we first need to understand the quantum Bruhat graph for $W$, denoted $\QBG(W)$. This is some directed, weighted graph, which has vertices in $W$ and weights in $\Phi^{\vee}$, the coroots. The `upwards', or weight-zero edges, align with the Hasse diagram for $W$. The quantum Bruhat graph is then a particular extension of this, with downwards edges corresponding to quantum roots. 

The quantum Bruhat graph was formally constructed by Brenti, Fomin and Postnikov \cite{BFP98}, originally studied for its use in quantum cohomology \cite{Postnikov05}.

We remark that in the finite case, any two shortest paths have the same weight, and so we have a well-defined distance and weight function on paths in $\QBG(W)$. This was proven by Postnikov, and generalised to the parabolic quantum Bruhat graph by Lenart, Naito, Sagako, Schilling, and Shimozono, \cite{Len+15} who used the quantum Bruhat graph to study the affine Bruhat order, motivated by the study of Kirillov-Reshetikhin crystals \cite{Len+17}. 

\vbls

Furthermore, the notion of \textit{length positivity} in $W$ or a given element $x \in \widehat{W}$ is defined by some conditions on the length functional $\ell(x, u\alpha)$ (cf. Section \ref{sec:prelim}). We say that some $u \in \Wfin$ (the underlying finite Weyl group) is \textit{length positive} if $\ell(x, u\alpha)$ is non-negative for every positive root $\alpha$ (cf. Eqn 2.2). The set of length positive elements is then some subset of $W$, denoted $\LP(x)$.

\vbls

Note that the right hand side of equation \ref{eqn:schremdeminit} appears to depend on the choice of pair $(u,v)$, which is not necessarily unique, as well as the choice of path $p$. When considering the double-affine case, we will need to show that the generalised product we define is independent of these choices.

\subsection{Results for the Double Affine Weyl Semigroup}

It is easy to see that $W_\mathcal{T}$ is no longer a Coxeter group (it is no longer a group), however Muthiah and Orr, Welch, and Philippe have given it a well-defined length function and Bruhat order, both Coxeter-like structures. The previous conjecture gives us a candidate for a Demazure product in $\WTits$, suggesting that such a product should exist.

In the case of affine $SL_2$, we extend results regarding the finite quantum Bruhat graph to the affine one, finding the following theorem.

\begin{thm}[Thm. \ref{thm:same-weight}, Thm. \ref{thm:vertical-edge}]
    Let $W$ be the Weyl group for $\widehat{SL}_2$ with quantum Bruhat graph $\QBG(W)$. Let $u,v \in W$. Then:
    \begin{enumerate}[label=(\roman*)]
        \item Any two shortest paths from $u$ to $v$ have the same total weight.
        \item The length of a path between $u$ and an element of $\LP(x) \subseteq W$ for some $x \in \WTits$, in either direction, is minimised by a unique element of $\LP(x)$.
    \end{enumerate}
\end{thm}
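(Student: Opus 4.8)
The plan is to run everything off the completely explicit picture of $\QBG(W)$ for $\widehat{SL}_2$. Since $W$ is the infinite dihedral group, its non-identity elements are the alternating words $a_n = s_0 s_1\cdots$ and $b_n = s_1 s_0 \cdots$ of length $n$, and I will record a vertex by its level together with a sign distinguishing the two chains. Two structural facts do all the work. First, every $w \neq e$ has a unique right descent, so it has a \emph{unique} outgoing down-edge $w \to w s_{i(w)}$, of weight $\alpha_{i(w)}^\vee$; this lowers the level by one and preserves the sign. (Under the affine $\rho$, only simple reflections satisfy the quantum edge condition, so there are no longer down-edges, which is the point that makes the statement both true and nontrivial.) Second, the two up-edges out of a level-$n$ vertex are the Bruhat covers to the two level-$(n+1)$ vertices, each of weight $0$, so an up-step may switch sign freely. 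Consequently the total weight of any path equals $p_0\,\alpha_0^\vee + p_1\,\alpha_1^\vee$, where $p_i$ counts the down-steps using $s_i$.

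For part (i), note that a path from $u$ to $v$ with $U$ up-steps and $D$ down-steps has length $U+D$ and satisfies $U - D = \lev(v) - \lev(u)$; hence along shortest paths $D = p_0 + p_1$ is constant, and it suffices to prove $p_0$ is path-independent. When $u$ and $v$ lie in the same chain the shortest path is forced to be a pure ascent or a pure descent, so there is nothing to check. When they lie in opposite chains, I will show that every shortest path is a descent, one sign-switching up-step at some level $j$, and a further descent, and that these are exactly parametrised by $j$ ranging over an interval (the endpoint $j=0$ being the route through $e$). The heart of the matter is then a parity computation: as $j$ increases by one, the down-step deleted from the first descent and the one inserted into the second descent use the \emph{same} simple reflection, precisely because the two chains carry opposite sign, so $p_0$ is unchanged. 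This pins down $\wt$ on all shortest paths.

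For part (ii) the first task is to make $\LP(x) \subseteq W$ explicit. Feeding Muthiah and Orr's double affine length functional into the positivity conditions $\ell(x,u\alpha)\ge 0$ over all positive affine roots $\alpha$, I expect these infinitely many inequalities to collapse to a bounded family cutting out a convex segment of consecutive vertices in the linear order on $W$. Granting this, I combine it with the distance formula read off from the edges above: for fixed $u$, both $v \mapsto d(u \Rightarrow v)$ and $v \mapsto d(v \Rightarrow u)$ are piecewise linear, integer-valued, and strictly unimodal along each chain, turning over exactly once near the sign change. Restricting such a function to the segment $\LP(x)$ and using strict unimodality shows the minimum cannot be attained at two adjacent vertices.

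The only way a tie could survive is if two candidate minimisers sit symmetrically on opposite sides of the turning point; I will exclude this with a parity argument of the same flavour as in part (i), using that consecutive vertices differ in distance by exactly one except across the sign change. I expect this tie-breaking, together with the precise determination of the two endpoints of the segment $\LP(x)$ from the Muthiah--Orr length function, to be the main obstacle of the whole theorem; by contrast part (i) is essentially mechanical once the graph has been drawn.
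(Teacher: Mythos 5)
Your part~(i) is essentially the paper's own proof of Theorem~\ref{thm:same-weight}: quantum edges exist only for the two simple affine roots, so every $w\neq e$ has a unique outgoing downward edge, and a shortest path between incomparable opposite-sided vertices is a string of downward edges with one diagonal Bruhat step whose position can be slid; sliding it by one step trades a downward edge of weight $\tilde{\alpha}_i^{\vee}$ for another of the same weight. (One small repair: your ``opposite chains'' case must first split off $u\leq v$, where the shortest paths are pure Bruhat paths with possibly several up-steps, all of weight zero; your ``descent, one up-step, descent'' normal form only applies when $u\nleq v$.)

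Part~(ii) has a genuine gap, in two places. First, you defer the entire determination of $\LP(x)$, and your guess at its shape is not what is needed: for $\lev(x)>0$ the paper proves (Prop.~\ref{prop:LP-algorithm}, Cors.~\ref{cor:LPx-size}--\ref{cor:no-degen-3lp}) that $\LP(x)$ has at most two elements (three only at level one), is connected by edges $w\to ws_i$, and is entirely one-sided --- in particular it is never the ``convex segment'' $\{s_1,e,s_0\}$. Establishing this occupies all of Section~3 and is where most of the content of the theorem lives. Second, and more seriously, the distance function is \emph{not} strictly unimodal along a chain: reading off Fig.~\ref{fig:full-QBG}, $d(s_1\Rightarrow e)=1$ (quantum edge), $d(s_1\Rightarrow s_0)=2$ (there is no edge $s_1\to s_0$, since $s_1s_0$ is not a reflection), and $d(s_1\Rightarrow s_0s_1)=1$ (diagonal Bruhat edge), so the distances from $s_1$ along the $s_0$-chain read $1,2,1,2,3,\dots$, with two local minima. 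The resulting tie is between vertices two steps apart in the chain, whose distances have the same parity, so no parity argument can break it. Nor is this configuration hypothetical: $\LP(s_1\tau^{-\alpha^{\vee}}\varepsilon^{\alpha^{\vee}-\delta+\Lambda_0})=\{e,s_0,s_0s_1\}$ (Fig.~\ref{fig:LP-examples} and Section~\ref{sec:examples}), and for $u=s_1$ the minimum of $d(u\Rightarrow\cdot)$ over this set is attained at both $e$ and $s_0s_1$. So your plan cannot close the three-element (level-one) case; be aware that this is also precisely the configuration where the paper's own proof of Theorem~\ref{thm:vertical-edge} relies on a ``clear from the figure'' biconditional that this example strains, so it needs explicit, careful treatment rather than a unimodality-plus-parity appeal.
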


This theorem, amongst others, leads us to the main focus of this paper.

\begin{thm}[Thm. \ref{thm:well-defined}, Thm. \ref{thm:assoc}]
    In type $\widehat{SL}_2$, Schremmer's formula for the Demazure product in $\WTits$ is well-defined for non-zero level, and associative for level greater than one.
\end{thm}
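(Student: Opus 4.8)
The plan is to treat Schremmer's formula
\[
x*y = w_x u v^{-1}\varepsilon^{vu^{-1}\mu_x + \mu_y - v\wt(p)}
\]
as a definition and verify, in order, that its output is independent of the choice of shortest path $p$, then of the minimising pair $(u,v)$, and finally that the resulting binary operation on $\WTits$ is associative once the level exceeds one. Here $W = \Waff$ is the infinite dihedral Weyl group of $\widehat{SL}_2$ and $\QBG(W)$ is its affine quantum Bruhat graph, so the two properties recorded in the preceding theorem --- that shortest paths share a common weight, and that distance to a length-positive set is minimised uniquely --- are precisely the inputs these verifications require.

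For well-definedness I would first fix a minimising pair $(u,v)$ and observe that, by part (i), every shortest path $p : u \Rightarrow w_y v$ carries the same total weight; hence $\wt(p)$, and with it the right-hand side of the formula, depends only on $(u,v)$. The remaining task is to show the formula is constant across the set of minimising pairs. I would use part (ii) to pin down this set: holding $u$ fixed, the length-positive partner $v$ nearest to $u$ in $\QBG(W)$ is unique, so each admissible $u$ determines its partner $v = v(u)$ and the ambiguity collapses to the choice of $u$. It then remains to check the two equalities
\[
u_1 v_1^{-1} = u_2 v_2^{-1}, \qquad v_1 u_1^{-1}\mu_x - v_1\wt(p_1) = v_2 u_2^{-1}\mu_x - v_2\wt(p_2)
\]
for any two minimising pairs $(u_1,v_1), (u_2,v_2)$. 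Both reduce, via the explicit dihedral structure of $\Waff$ and the description of $\LP(x)$ at non-zero level, to comparing weights of shortest paths differing by a single reflection: part (i) again forces these weights to agree, yielding the translation identity, while the Weyl-part identity follows from how neighbouring length-positive elements are related in the graph. Non-zero level is what keeps $\LP(x)$ in the regular regime where this comparison is valid.

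For associativity I would expand $(x*y)*z$ and $x*(y*z)$ using the formula and compare. Writing $x*y = w_{xy}\varepsilon^{\mu_{xy}}$ with $w_{xy} = w_x u_1 v_1^{-1}$, the outer product demands a minimising pair for $(x*y,\,z)$ together with a shortest path in $\QBG(W)$, and the heart of the argument is a concatenation statement: a shortest path realising $(x*y)*z$ should be assembled from the path $p_1$ used for $x*y$ and a shortest path for the second factor, with weights and length-positive choices adding consistently. I would formalise this as a lemma asserting that, for level greater than one, the length-positive element realising the inner product is simultaneously a valid endpoint for a minimal path of the outer product, so that both associations produce the same concatenated path up to the common-weight ambiguity controlled by part (i). The symmetry of the resulting expression in the three factors then yields $(x*y)*z = x*(y*z)$.

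The main obstacle I anticipate is exactly this compatibility of choices in the associativity step. Unlike Schremmer's finite setting, $\QBG(W)$ here is infinite, so there is no longest element and no finiteness shortcut: the uniqueness of part (ii) and the common-weight property of part (i) must do all of the work, and they are only strong enough when the level is at least two. At level one I expect the minimising pairs for the inner and outer products to become misaligned --- the nearest length-positive element for $x*y$ need not be compatible with that for $x$ --- which is the structural reason associativity is claimed only for level greater than one; isolating and ruling out precisely these low-level degeneracies is where I expect the bulk of the effort to lie.
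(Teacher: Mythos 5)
Your treatment of well-definedness follows the paper's route: path-independence from the common-weight property, then a reduction to showing $uv^{-1}$ and $v\wt(u\Rightarrow w_yv)$ agree across minimising pairs, settled by comparing two pairs of the form $(u,v)$ and $(us_i,vs_i)$ using the explicit dihedral structure and the classification of $\LP(x)$. That half is an accurate (if compressed) outline of what the paper does, including the separate handling needed when one factor lies on the boundary of the Tits cone.

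The associativity half has a genuine gap. You propose to prove $(x_1*x_2)*x_3 = x_1*(x_2*x_3)$ by a concatenation lemma for shortest paths, so that both bracketings ``produce the same concatenated path up to the common-weight ambiguity.'' This cannot work as stated, because the discrepancy between the two bracketings is not purely a path-weight phenomenon: writing out both triple products, the coweight $\mu_2$ of the \emph{middle} factor is transported by $v_{12,3}u_{12,3}^{-1}$ in one bracketing and by $v_{2,3}u_{2,3}^{-1}$ in the other, producing a term $\eta_1 = v_{12,3}u_{12,3}^{-1}\mu_2 - v_{2,3}u_{2,3}^{-1}\mu_2 = -\langle \mu_2, u_{2,3}\tilde{\alpha}_i\rangle v_{2,3}(\tilde{\alpha}_i^{\vee})$ whenever $u_{12,3}=u_{2,3}s_i$ with $v_{12,3}=v_{2,3}$. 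Associativity holds only because this $\mu_2$-dependent term exactly cancels the mismatch $\eta_2+\eta_3$ in the transported path weights, and verifying $\eta_1=\eta_2+\eta_3$ requires computing $\langle\mu_2,u_{2,3}\tilde{\alpha}_i\rangle\in\{0,\pm1\}$ case by case from the explicit classification of $\LP(x_2)$ --- none of which is visible from a path-concatenation picture. You also omit the ingredient that makes compatible choices possible at all, namely $\LP(x*y)=M^y_{x,y}$ (the paper's Thm.~\ref{thm:LP-of-demprod}), which is what licenses setting $v_{1,2}=u_{12,3}$ and $v_{2,3}=v_{1,23}$ before comparing the two sides. Finally, your structural explanation of the level-one restriction is not the paper's: the issue there is that $|\LP(x)|$ can equal $3$, multiplying the cases beyond what was checked (the paper in fact conjectures associativity still holds at level one), rather than an actual misalignment of minimising pairs.
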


Furthermore, we prove that this product satisfies other important properties that we expect of a Demazure product in the general case, assuming well-definedness. In particular, we have the following results.

\begin{thm}[Thm. \ref{bigone}, Thm. \ref{demlen}]
    Consider any Kac-Moody root datum, and let $x, y \in \WTits$. Assume that the Demazure product $x*y$ is well-defined. Then:
    \begin{enumerate}[label=(\roman*)]
        \item $\ell(x*y) = \ell(x) + \ell(y)\ \iff\ \ell(xy) = \ell(x) + \ell(y)$, in which case $x*y = xy$.
        \item For an explicitly described path $p$ in $\QBG(W)$, $\ell(x*y) = \ell(x) + \ell(y) - \ell(p)$.
    \end{enumerate}
\end{thm}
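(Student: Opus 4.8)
The plan is to establish (ii) first, since the length identity $\ell(x*y) = \ell(x)+\ell(y)-\ell(p)$ carries essentially all of the content, and then to extract (i) as a formal consequence. For (ii) I would begin from Schremmer's formula, writing $x*y = w_xuv^{-1}\varepsilon^{\lambda}$ with $\lambda = vu^{-1}\mu_x + \mu_y - v\wt(p)$ for a length-positive pair $(u,v)$ realising the minimal distance $d(u\Rightarrow w_yv)$ along a shortest path $p$. Expanding $\ell(x*y) = \sum_{\alpha>0}|\ell(x*y,\alpha)|$ through the length functional, I would use length positivity of $u$ and $v$ to collapse the absolute values on the contributions attached to $x$ and to $y$, which leaves $\ell(x)+\ell(y)$ together with a correction term coming from $v\wt(p)$ and the inversions of $uv^{-1}$.

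The heart of the argument is then a telescoping computation along $p$. Writing the path as a chain of edges $u = z_0 \to z_1 \to \cdots \to z_k = w_yv$, each labelled by a (co)root, I would show that each edge alters the relevant length functional by exactly one unit: weight-zero (up) edges and quantum (down) edges contribute with signs arranged so that the accumulated defect is precisely the number of edges $k = d(u\Rightarrow w_yv) = \ell(p)$. This is where the defining inequalities of the quantum Bruhat graph enter — the $\pm 1$ length change on Bruhat edges and the $\langle\wt(p),\alpha\rangle$ bookkeeping on quantum edges — and it is the step I expect to be the main obstacle: one must check that the length-positive pair keeps every partial sum on the correct side of zero, so that no absolute value is misresolved while traversing the path. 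The hypothesis of well-definedness, together with the same-weight and minimality results quoted earlier, guarantees that $\ell(p)$ and $\wt(p)$ are path-independent, so the resulting identity is unambiguous.

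For (i), everything follows from (ii) once one observes that $\ell(p)\geq 0$, with equality exactly when $p$ is the trivial path, i.e. $u = w_yv$. In that case Schremmer's formula simplifies directly: $uv^{-1} = w_y$, $vu^{-1} = w_y^{-1}$ and $\wt(p)=0$, whence $x*y = w_xw_y\varepsilon^{w_y^{-1}\mu_x+\mu_y} = xy$. Thus $\ell(x*y) = \ell(x)+\ell(y)$ forces $\ell(p)=0$ and hence $x*y = xy$, which gives the forward implication and the ``in which case'' clause simultaneously. For the converse I would argue that $\ell(xy)=\ell(x)+\ell(y)$ is equivalent to the existence of a length-positive pair with $u = w_yv$ — concretely, that length additivity of the honest product forces $w_y^{-1}\LP(x)\cap\LP(y)\neq\emptyset$ — so that the minimal distance vanishes and $\ell(p)=0$; equivalently, one shows $\ell(xy)\leq \ell(x*y)$ in general and combines it with the bound $\ell(x*y)\leq \ell(x)+\ell(y)$ supplied by (ii). The delicate point here, and the part most in need of care in the double affine setting where no ambient Coxeter group is available, is justifying this comparison (or the $\LP$-intersection criterion) without circularity.
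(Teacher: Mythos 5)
Your overall architecture --- prove the length identity (ii) first by combining a formula for $\ell$ in terms of a length positive element with a telescoping computation along a shortest path, then deduce (i) --- matches the paper's. The telescoping step is sound and is exactly the paper's first proposition, $\langle \wt(w_1\Rightarrow w_2),2\rho\rangle = d(w_1\Rightarrow w_2)+\ell(w_1)-\ell(w_2)$. However, your starting point for (ii) is not available here: the identity $\ell(z)=\sum_{\alpha>0}|\ell(z,\alpha)|$ is a feature of the extended affine Weyl group and fails in the double affine setting, since for an element of positive level the pairings $\langle \mu, v\tilde{\alpha}\rangle$ grow linearly in the imaginary part of $\tilde{\alpha}$, so the sum over $\Phi^+$ diverges. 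The paper replaces it with Prop.~\ref{lplen}, namely $\ell(x)=\langle u^{-1}\mu_x,2\rho\rangle-\ell(u)+\ell(w_xu)$ for $u\in\LP(x)$, whose proof goes through the Muthiah--Orr definition of the double affine length function and a count of inversion sets, using the identity $|\Inv(u^{-1})\cap\Inv(w)| = \tfrac{1}{2}\left(\ell(w)+\ell(u)-\ell(wu)\right)$. This substitute lemma is the genuinely new ingredient needed for (ii), and your proposal does not supply it; once it is in place, the computation of $\ell(x*y)$ with $v\in\LP(x*y)$ (itself a fact that needs the distance-minimality of $(u,v)$) collapses to $\ell(x)+\ell(y)-d(u\Rightarrow w_yv)$ exactly as you intend.

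The second gap is in the converse direction of (i). The forward direction and the ``in which case'' clause do follow formally from (ii) as you say: distance zero forces $u=w_yv$, and the formula collapses to $xy$. But the equivalence $\ell(xy)=\ell(x)+\ell(y)\iff w_y^{-1}\LP(x)\cap\LP(y)\neq\emptyset$, which you invoke without proof, is precisely the content of the paper's proof of Thm.~\ref{bigone} and is not a formality. It is established via the Muthiah--Pusk\'as formula $\ell(xy)=\ell(x)+\ell(y)-2|\Inv(x)\cap\Inv(y^{-1})|$ for the honest product, by showing that a positive double affine root in $\Inv(x)\cap\Inv(y^{-1})$ obstructs simultaneous length positivity of any $v\in w_y^{-1}\LP(x)$ for $y$, and conversely by manufacturing such a double affine root (with imaginary coefficient $m=-\Phi^+(w_yv\tilde{\beta})$) from a finite-side root $\tilde{\beta}$ witnessing $\ell(x,w_yv\tilde{\beta})<0$ for a suitably adjusted $v\in\LP(y)$. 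Your fallback route, comparing $\ell(xy)\leq\ell(x*y)$, is the circularity you yourself flag: it would require a Bruhat-order characterisation of $*$ in $\WTits$, which is exactly what is unavailable in the double affine setting.
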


In section \ref{sec:examples}, we go on to calculate some examples of Demazure products using Schremmer's formula, which match up with those computed by Muthiah and Pusk\'as from the Hecke algebra definition.

\subsection{Acknowledgements}

We thank Dinakar Muthiah for his supervision during this project, and many helpful discussions, as well as Dan Orr for comments on an earlier version of this paper. We also thank Cristian Lenart, Paul Philippe, and Anna Pusk\'as for interesting conversations on the topics included and their surrounding areas.

\vbls
\section{Notation and Preliminaries}\label{sec:prelim}
\subsection{Set-Up}

Fix a Kac-Moody root datum, and denote $\Phi$ for the set of roots. Choose a set of simple roots $\Delta \subset \Phi$, and write $\Phi^+$ for the set of positive roots. Let $P$ be the weight lattice, and $P^{\vee}$ the corresponding coweight lattice. Denote $\langle \cdot, \cdot \rangle$ for the natural pairing between $\mathbb{Z}\Phi$ and $P^{\vee}$. 

Let $W$ be the Weyl group, with its natural action on $\mathbb{Z}\Phi$. Write $e \in W$ for the identity element. Let $\Pdom$ be the dominant weights. The (integral) Tits cone $\Tits \subset P^{\vee}$ is defined to be the Weyl orbit of the dominant coweights, such that $\Tits = \bigcup_{w \in W}w(\Pdom^{\vee})$.

We construct $\WTits := W \ltimes \Tits$, the \textbf{affine Weyl semigroup} associated to $G$. In the case that $G$ is affine, we refer to $\WTits$ as the \textit{double} affine Weyl semigroup. A generic element $x \in \WTits$ is written as $x = w\varepsilon^{\mu}$, where $w \in W$ and $\mu \in P^{\vee}$. Note that in the finite case, $\Tits = P^{\vee}$, and so $\WTits = \widehat{W}$, the extended affine Weyl group.

\subsection{Demazure Product}

Consider $W$, or more generally any Coxeter group. For $w_1, w_2 \in W$, let $w_1 \leq w_2$ denote the Bruhat order. We define the \textbf{Demazure product} $*$ on $W$ to be the associative product satisfying the following, for $w, s \in W$ with $s$ a simple reflection.

\begin{equation} \label{eq:dem-def}
    w * s = \begin{cases}
        ws\ \ \text{if}\ ws > w, \\
        w\phantom{s}\ \ \text{if}\ ws < w.
    \end{cases}
\end{equation}

Note the similarity here to the relations satisfied by Demazure operators \cite{Demazure1973}. It is also well known that the Demazure product of two elements $w_1, w_2 \in W$ coincides with the set $\max\{w_1'w_2'\ |\ w_1' \leq w_1, w_2' \leq w_2\}$. This also holds in the extended affine Weyl group (see e.g. \cite{He09}).

Schremmer provides an alternate method of calculating the Demazure product in $\widehat{W}$ \cite{SchremABODP}, and therefore any finite or affine Weyl group, which we outline below with a goal of generalising the setup to a double affine $\WTits$.

\subsection{Quantum Bruhat Graph}

We introduce the quantum Bruhat graph, initially defined by Brenti, Fomin and Postnikov \cite{BFP98} for a finite Weyl group, and generalised to affine Weyl groups by Welch \cite{Welch19}. Let $\rho$ be the sum of the fundamental weights for $G$, equal to the half sum of the positive roots if finite. Let $\ell : W \rightarrow \mathbb{Z}_{\geq 0}$ denote the usual length function.

\begin{defn}
    The \textbf{quantum Bruhat graph} for $W$, denoted $\QBG(W)$, is the directed, weighted graph with vertex set $W$, weights in $\mathbb{Z}\Phi^{\vee}$, and an edge $w \rightarrow ws_{\alpha}$ for $\alpha \in \Phi^+$ if one of the following conditions is satisfied.
    \begin{enumerate}[label=\textup{(\roman*)}]
        \item $\ell(ws_{\alpha}) = \ell(w) + 1$.
        \item $\ell(ws_{\alpha}) = \ell(w) + 1 - \langle 2\rho, \alpha^{\vee} \rangle$.
    \end{enumerate}
    Edges of type \textup{(i)} are called \textbf{Bruhat edges} and have weight $0$. Edges of type \textup{(ii)} are \textbf{quantum edges} and have weight $\alpha^{\vee}$.
\end{defn}

We refer to Bruhat edges as ``upward" and quantum edges as ``downward", due to the length difference between the two relevant vertices. A \textbf{path} $p$ in $\QBG(W)$ is a sequence of consecutive edges. The \textbf{length} of a path is the number of edges, and its \textbf{weight} is the sum of the weights along those edges. We will also use the fact that $\langle 2\rho, \alpha^{\vee} \rangle = 2\height(\alpha)$.

\begin{prop}
    For $w_1, w_2 \in W$, there exists a path $w_1 \Rightarrow w_2$.
\end{prop}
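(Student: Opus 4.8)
The plan is to establish strong connectivity of $\QBG(W)$ by routing every path through the identity $e$: I will construct a directed (upward) path $e \Rightarrow w_2$ and a directed (downward) path $w_1 \Rightarrow e$, and then concatenate them to obtain $w_1 \Rightarrow w_2$. The upward half is essentially free, since following a reduced expression gives a chain of Bruhat edges out of $e$; the content is producing the downward half, for which I need to know that descents by simple reflections are always realised by quantum edges.

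The key observation is that for a simple root $\alpha_i \in \Delta$ one has $\langle 2\rho, \alpha_i^\vee\rangle = 2$, since $\rho$ is the sum of the fundamental weights and hence $\langle \rho, \alpha_i^\vee\rangle = 1$ (equivalently $\height(\alpha_i) = 1$). Consequently, for any $w \in W$ there is always a directed edge $w \to w s_i$ in $\QBG(W)$: if $\ell(w s_i) = \ell(w) + 1$ it is a Bruhat edge, while if $\ell(w s_i) = \ell(w) - 1$ then $\ell(w s_i) = \ell(w) + 1 - \langle 2\rho, \alpha_i^\vee\rangle$, so it is a quantum edge. Since right multiplication by a simple reflection always changes length by exactly one in a Coxeter group, one of these two cases always holds.

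I then assemble the two paths explicitly. For the upward path, fix a reduced expression $w_2 = s_{i_1}\cdots s_{i_k}$ and set $v_\ell = s_{i_1}\cdots s_{i_\ell}$; each step $v_{\ell} \to v_{\ell} s_{i_{\ell+1}} = v_{\ell+1}$ increases length by one, hence is a Bruhat edge, giving $e = v_0 \Rightarrow v_k = w_2$. For the downward path, fix a reduced expression $w_1 = s_{j_1}\cdots s_{j_m}$ and set $u_\ell = s_{j_1}\cdots s_{j_\ell}$; peeling reflections off the right, $u_\ell s_{j_\ell} = u_{\ell-1}$ has length one less than $u_\ell$, so by the observation above $u_\ell \to u_{\ell-1}$ is a quantum edge, giving $w_1 = u_m \Rightarrow u_0 = e$. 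Concatenating yields $w_1 \Rightarrow e \Rightarrow w_2$, as required.

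This argument is uniform across finite and affine $W$ — and in fact any Kac-Moody $W$ — because it uses only that $(W, \Delta)$ is a Coxeter system together with the normalisation $\langle \rho, \alpha_i^\vee\rangle = 1$ on simple roots; in particular it makes no use of a longest element, so it applies verbatim to the infinite-dihedral Weyl group of $\widehat{SL}_2$. I do not expect a substantive obstacle here: the only point requiring care is confirming that normalisation in the (possibly affine) setting, but this is immediate from $\rho = \sum_i \Lambda_i$ and $\langle \Lambda_i, \alpha_j^\vee\rangle = \delta_{ij}$. The one thing worth flagging is that this proof certifies only the \emph{existence} of a path, not its minimality or weight; the finer questions of shortest-path length and weight uniqueness are exactly what the later theorems must address.
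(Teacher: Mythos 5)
Your proof is correct and follows essentially the same route as the paper's: both descend from $w_1$ to $e$ via quantum edges at simple reflections and ascend from $e$ to $w_2$ via Bruhat edges, then concatenate. You merely make explicit the computation $\langle 2\rho,\alpha_i^{\vee}\rangle = 2$ justifying the downward quantum edges, which the paper leaves implicit.
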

\begin{proof}
    Following Postnikov's proof \cite{Postnikov05} for finite Weyl groups, let $w \neq e \in W$ and consider a simple reflection $s$ such that $ws < w$. Then $ws \rightarrow w$ is a Bruhat edge, and $w \rightarrow ws$ is a quantum edge. Iterating this, we find paths $w \rightarrow e$ and $e \rightarrow w$. Concatenating these paths for $w_1$ and $w_2$ respectively yields a path $w_1 \rightarrow w_2$.
\end{proof}

We say the \textbf{distance} between two vertices $d(w_1 \Rightarrow w_2)$ is the length of a shortest path between them. Note that $w_1 \leq w_2$ in the Bruhat order if and only if there exists a weight $0$ path between them. In the case that $W$ is finite, we also have the following:

\begin{prop}\textup{\cite[Lemma 1]{Postnikov05}}\label{postlem1}
    Assume $W$ is finite, and let $w_1, w_2 \in W$. Then all shortest paths from $w_1$ to $w_2$ have the same weight, denoted $\wt(w_1 \Rightarrow w_2)$.
\end{prop}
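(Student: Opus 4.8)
The plan is to reduce the statement to a local ``weighted diamond'' property together with one global linear constraint, and then run an induction on the distance $d(w_1 \Rightarrow w_2)$. First I would record the piece of information about path weights that comes for free. Along a Bruhat edge the length increases by $1$ and the weight by $0$, while along a quantum edge $x \to xs_\alpha$ the length changes by $1 - \langle 2\rho, \alpha^\vee\rangle$ and the weight by $\alpha^\vee$. Summing these increments over the edges of any path $p : w_1 \Rightarrow w_2$ of length $\ell(p)$ gives
\begin{equation}
\ell(w_2) - \ell(w_1) = \ell(p) - \langle 2\rho, \wt(p)\rangle,
\end{equation}
so that $\langle 2\rho, \wt(p)\rangle = \ell(p) + \ell(w_1) - \ell(w_2)$ depends only on $\ell(p)$ and the endpoints. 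In particular any two paths of equal length, a fortiori any two shortest paths, have weights with the same pairing against $2\rho$. When $W$ has rank one this already determines $\wt(p)$, so the entire content lies in controlling the remaining components of the weight in higher rank.

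Next I would set up the induction. Let $p, p'$ be two shortest paths $w_1 \Rightarrow w_2$. If they begin with the same edge $w_1 \to x$, then deleting it leaves two shortest paths $x \Rightarrow w_2$, whose distance is strictly smaller, and I conclude by the inductive hypothesis. If instead they begin with distinct edges $w_1 \to a$ and $w_1 \to b$, I would invoke a local lemma: given two distinct edges out of $w_1$ that each start a shortest path to $w_2$, there is a vertex $c$ with edges $a \to c$ and $b \to c$ completing a weighted square,
\begin{equation}
\wt(w_1 \to a) + \wt(a \to c) = \wt(w_1 \to b) + \wt(b \to c),
\end{equation}
and such that $c$ still lies on a shortest path from $w_1$ to $w_2$. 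Granting this, let $W_a$ and $W_b$ denote the (by induction, well-defined) weights of shortest paths $a \Rightarrow w_2$ and $b \Rightarrow w_2$; since $c$ sits on a geodesic from each of $a$ and $b$, the square relation yields $W_a - W_b = \wt(a \to c) - \wt(b \to c) = \wt(w_1 \to b) - \wt(w_1 \to a)$, hence $\wt(p) = \wt(w_1 \to a) + W_a = \wt(w_1 \to b) + W_b = \wt(p')$, completing the inductive step.

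The main obstacle is proving the local diamond lemma in a form compatible with shortest paths, i.e. guaranteeing that the closing vertex $c$ can be chosen to remain on a geodesic to $w_2$ rather than merely completing some square. This is where the finiteness of $W$ is essential, and it is exactly the combinatorial heart of Brenti--Fomin--Postnikov's analysis. I would fix a reflection ordering of $\Phi^+$ and use their structural result that between any two vertices there is a \emph{unique} label-increasing and a unique label-decreasing shortest path; this organises the geodesics enough to produce the required squares. The diamond itself I would verify by a case check according to whether each of the two outgoing edges $w_1 \to a$, $w_1 \to b$ is Bruhat or quantum, locating $c$ via the covering and quantum-covering relations in $W$ and confirming the weight bookkeeping in each case.
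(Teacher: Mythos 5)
The paper does not prove this proposition itself: it cites Postnikov's Lemma 1 and remarks only that the proof relies on Brenti--Fomin--Postnikov's Section 6 result that any two shortest paths are related by weight-preserving switches of adjacent pairs of edges. Your sketch reconstructs exactly that argument (induction on $d(w_1 \Rightarrow w_2)$ plus a local weight-preserving diamond), correctly identifies that the diamond lemma --- which you, like the paper, ultimately defer to the Brenti--Fomin--Postnikov reflection-ordering analysis --- is the entire combinatorial content, and your preliminary identity $\langle 2\rho, \wt(p)\rangle = \ell(p) + \ell(w_1) - \ell(w_2)$ is consistent with what the paper later proves in its Proposition on pairing weights with $2\rho$.
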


Postnikov's proof of Prop. \ref{postlem1} relies on \cite[Section 6]{BFP98}, who show that any two shortest paths $w_1 \rightarrow w_2$ can be obtained by applying weight-preserving switches of pairs of edges, however this does not hold in the affine case \cite{Welch19}.

\subsection{Schremmer's Formula}

Let $\Phi^+( \cdot )$ denote the indicator function for a positive root, returning 1 if the argument is positive, and 0 otherwise. We also write $\alpha > 0$ or $\alpha < 0$ if $\alpha$ is a positive or negative root respectively. The \textbf{length functional} for $x = w \varepsilon^{\mu} \in \WTits$ and $\alpha \in \Phi$ is defined as follows:

\vbls[-1]

\begin{equation}
    \ell(x, \alpha) = \langle \alpha, \mu \rangle + \Phi^+(\alpha) - \Phi^+(w \alpha).
\end{equation}

\vbls[0.5]

We say $v \in W$ is \textbf{length positive} for $x \in \WTits$ if $\ell(x, v\alpha) \geq 0$ for every $\alpha \in \Phi^+$. We write $\LP(x)$ for the set of length positive elements for $x$. 

\begin{defn}
    Let $x, y \in \WTits$, and let $w_y \in W$ be the Weyl component of $y$, so that $y = w_y \varepsilon^{\mu_y}$ for some $\mu_y \in P^{\vee}$. The \textbf{distance minimising set} for the pair $x,y$ is
    \begin{equation}
        M_{x,y} := \{ (u, v) \in \LP(x) \times \LP(y)\ |\ d(u \Rightarrow w_y v) \leq d(u' \Rightarrow w_y v')\ \forall u', v' \in W \}.
    \end{equation}
\end{defn}

We see that $M_{x,y}$ is always non-empty, as a shortest path between any two elements of $W$ always exists. The following definition is adapted from Thm. 5.11 of \cite{SchremABODP}.

\begin{defn}[Generalised Demazure Product]\label{def:gen-dem-prod}
    Let $x = w_x \varepsilon^{\mu_x}, y = w_y \varepsilon^{\mu_y} \in \WTits$, and let $(u,v) \in M_{x,y}$. Fix a shortest path $p : u \rightarrow w_y v$ in $\QBG(W)$. We define the \textbf{generalised Demazure product} $*_{u,v}^p$ on $\WTits$ as
    \begin{equation}
        x *_{u,v}^p y := w_x u v^{-1} \varepsilon^{vu^{-1} \mu_x + \mu_y - v\wt(p)}.
    \end{equation}
\end{defn}

Recall that in the finite case, $\wt(p)$ is independent on the choice of $p$, and so we write $*_{u,v} := *_{u,v}^p$. We have the following theorem due to Schremmer.

\begin{thm}\textup{\cite[Thm. 5.11, Prop. 5.12(b)]{SchremABODP}}\label{schremform}
    Assume $W$ is a finite Weyl group. Let $x,y \in \WTits = \widehat{W}$, and $(u,v) \in M_{x,y}$. 
    \begin{enumerate}[label=\textup{(\roman*)}]
        \item $x *_{u,v} y = x * y$, the Demazure product of $x$ and $y$.
        \item $uv^{-1}$ and $v\wt(u \Rightarrow w_y v)$ are independent of the choice of $(u,v) \in M_{x,y}$.
    \end{enumerate}
\end{thm}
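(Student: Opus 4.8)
The plan is to establish (i) for an arbitrary choice of $(u,v)\in M_{x,y}$ and then read off (ii) as a formal consequence. For (i) I would work from the combinatorial description $x*y = \max_{\leq}\{x'y' : x'\leq x,\ y'\leq y\}$ and its iterative form \eqref{eq:dem-def}, together with the basic bookkeeping supplied by the length functional: for any $z=w\varepsilon^{\mu}\in\widehat W$ one has $\ell(z)=\sum_{\alpha>0}|\ell(z,\alpha)|$, and because $\ell(z,\cdot)$ is odd this equals $\sum_{\alpha>0}\ell(z,u\alpha)$ whenever $u\in\LP(z)$, i.e. a sum of non-negative terms. Thus the role of the length-positive representatives $u\in\LP(x)$ and $v\in\LP(y)$ is to present $x$ and $y$ in compatible Weyl chambers so that, away from a controlled defect, their lengths add under multiplication.

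The heart of the matter is to show that this defect is recorded by a shortest path in the finite quantum Bruhat graph from $u$ to $w_yv$. I would prove the key identity that the ``straightened'' product $x\cdot y = w_xw_y\varepsilon^{w_y^{-1}\mu_x+\mu_y}$ gets corrected to $w_xuv^{-1}\varepsilon^{vu^{-1}\mu_x+\mu_y-v\wt(p)}$, with each edge of the path $p:u\Rightarrow w_yv$ accounting for exactly one unit of length cancellation and each quantum edge contributing its coroot to the translation through $\wt(p)$. The cleanest sanity check, and the base case, is the situation $\ell(xy)=\ell(x)+\ell(y)$: here one may take $u=w_yv$, so $p$ has length and weight $0$, $uv^{-1}=w_y$, and the formula returns $x\cdot y$ exactly. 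The general case is then obtained by resolving the cancellation one Bruhat/quantum step at a time and matching each step against the recursion \eqref{eq:dem-def}. Since $W$ is finite, the weight $\wt(p)$ is the same for all shortest paths by Prop. \ref{postlem1}, so no path-dependence arises and $*_{u,v}$ is genuinely well defined for each fixed $(u,v)$.

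Part (ii) then follows with no further geometry. The left-hand side $x*y$ depends only on $x$ and $y$; comparing finite (Weyl) components in $w_xuv^{-1}\varepsilon^{(\cdots)}=x*y$ and cancelling the fixed factor $w_x$ shows that $uv^{-1}$ is independent of the choice of $(u,v)\in M_{x,y}$. Comparing translation components then shows that $vu^{-1}\mu_x+\mu_y-v\wt(u\Rightarrow w_yv)$ is independent, and since $vu^{-1}=(uv^{-1})^{-1}$, $\mu_x$, and $\mu_y$ are all already fixed, the quantity $v\wt(u\Rightarrow w_yv)$ must be independent as well.

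The step I expect to be the main obstacle is the exact coweight bookkeeping in the middle paragraph: verifying that, as the cancellation is resolved edge by edge, the accumulated translation shift is precisely $-v\wt(p)$, with the correct $v$-twist and the correct passage from $w_y^{-1}\mu_x$ to $vu^{-1}\mu_x$. This is where length-positivity is used in an essential way, to guarantee that each edge of the quantum Bruhat graph corresponds to a genuine non-cancelling Demazure step and that the chambers remain aligned throughout the process. It is also precisely the part of the argument that does not transfer verbatim once $W$ is itself affine, which is what forces the finer study of $\QBG(W)$ undertaken in the remainder of the paper.
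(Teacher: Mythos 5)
The paper does not actually prove Theorem \ref{schremform}: it is imported verbatim from Schremmer, and the only commentary offered is the remark immediately following it, namely that Schremmer establishes (i) by a direct analysis of the Bruhat order on $\widehat{W}$, and that (ii) is then a formal corollary of (i) by uniqueness of the Demazure product. Your derivation of (ii) from (i) --- compare Weyl components to pin down $uv^{-1}$, then compare translation components to pin down $v\wt(u\Rightarrow w_yv)$ --- is exactly this observation and is correct.

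For (i), however, your proposal is an outline rather than a proof, and the gap sits where you locate it but is larger than ``coweight bookkeeping.'' The assertion that each edge of $p:u\Rightarrow w_yv$ accounts for exactly one unit of length cancellation, with each quantum edge contributing its coroot to the translation, \emph{is} the content of the theorem; asserting it and then ``matching each step against the recursion \eqref{eq:dem-def}'' does not explain (a) why the relevant path is a shortest path in $\QBG(W)$ between \emph{length-positive} representatives, (b) why one must minimize $d(u\Rightarrow w_yv)$ over all of $\LP(x)\times\LP(y)$ rather than take an arbitrary length-positive pair, and (c) how the maximum in $\max\{x'y' : x'\le x,\ y'\le y\}$ is detected by this distance. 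The missing bridge is a characterization of the Bruhat order on $\widehat{W}$ in terms of paths in the quantum Bruhat graph (the criterion of Lenart--Naito--Sagaki--Schilling--Shimozono, alluded to in the paper's citation of \cite{Len+15}); without it, the induction you describe cannot start, because the recursion \eqref{eq:dem-def} lives in the Coxeter presentation while the formula lives in the Bernstein presentation. Your base case ($\LP(x)\cap w_y\LP(y)\neq\emptyset$ forcing $p$ trivial and $x*y=xy$) is correct, and the appeal to Prop.\ \ref{postlem1} for path-independence of $\wt(p)$ is the right use of finiteness, but as written the argument for (i) is a restatement of the theorem plus a promissory note for its hardest step.
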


This gives a method of calculating the Demazure product of two extended affine Weyl elements without directly using the Bruhat order, and instead considering $\QBG(W)$, which is finite. Our goal now is to generalise this to the double affine case, using it as a definition of a Demazure product in $\WTits$.

\begin{rmk}
    Schremmer proves Thm \ref{schremform}(i) by considering the Bruhat order on $\widehat{W}$ directly. Uniqueness of the Demazure product then immediately proves Thm \ref{schremform}(ii) as a corollary. \textit{A priori}, there is no direct proof of part (ii) that does not depend on the Demazure product.
\end{rmk}

\subsection{Double Affine Case}

Assume $W$ is an affine Weyl group, and let $\Wfin$ be the associated finite Weyl group with finite roots $\Phifin$, such that $W \cong \Wfin \ltimes \mathbb{Z}\Phifin^{\vee}$. Note that we can decompose the set of roots as $\Phi = \Phifin \times \mathbb{Z}\delta$, where $\delta$ is the minimal imaginary root. Let $\Deltafin$ be the finite simple roots, and let $\theta \in \Phifin$ be the longest root. Then $\Delta = \Deltafin \cup \{-\theta + \delta\}$.

We have positive roots $\Phi^+ = \{\tilde{\alpha} = \sgn(n)(\alpha + n\delta)\ |\ \alpha \in \Phifin^+, n \in \mathbb{Z} \}$, where $\sgn(n) = 1$ for $n \geq 0$, and is $0$ otherwise. We also have the decomposition $P^{\vee} = \Pfin^{\vee} \oplus \mathbb{Z}\delta \oplus \mathbb{Z}\Lambda_0$, where $\Pfin$ is the weight lattice of $\Wfin$, and $\Lambda_0$ is a fundamental (affine) coweight chosen such that, for $\tilde{\beta} \in \Delta$, $\langle \tilde{\beta}, \Lambda_0 \rangle = 0$ and $\langle \delta, \Lambda_0 \rangle = 1$. 

\begin{prop}\textup{\cite[Prop 5.8(b)]{KacIDLA}}
    \begin{equation}
        \Tits = \{\lambda + m\delta + l\Lambda_0\ |\ \lambda \in \mathbb{Z}\Phifin^{\vee}, m \in \mathbb{Z}, l \in \mathbb{Z}_{> 0} \}  \cup  \mathbb{Z}\delta.
    \end{equation} 
\end{prop}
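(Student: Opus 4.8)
The plan is to prove both inclusions by exploiting the fact that the \emph{level} $\langle \delta, \mu \rangle$ is a $W$-invariant of $\mu \in P^{\vee}$. Since the pairing is $W$-invariant and the imaginary root $\delta$ is fixed by $W$, we have $\langle \delta, w\mu \rangle = \langle w^{-1}\delta, \mu \rangle = \langle \delta, \mu \rangle$; and for $\mu = \lambda + m\delta + l\Lambda_0$ one computes $\langle \delta, \mu \rangle = l$ using $\langle \delta, \Lambda_0 \rangle = 1$ together with $\langle \delta, \lambda \rangle = \langle \delta, \delta \rangle = 0$. Hence $\Tits$ is a union of level sets, and every element of $\Tits$ shares the level of the dominant coweight to which it is $W$-conjugate. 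This reduces the problem to determining, level by level, which coweights of $P^{\vee}$ are $W$-conjugate to a dominant one.

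First I would establish the easy containment $\Tits \subseteq \mathrm{RHS}$ together with non-negativity of the level. Writing $\delta = \sum_i a_i \alpha_i$ with all marks $a_i > 0$ in untwisted affine type, any $\mu \in \Pdom^{\vee}$ satisfies $l = \langle \delta, \mu \rangle = \sum_i a_i \langle \alpha_i, \mu \rangle \geq 0$, since $\langle \alpha_i, \mu \rangle \geq 0$ for every simple root by dominance. By level-invariance this forces the level to be non-negative on all of $\Tits$, ruling out negative levels. For $l = 0$, evaluating the pairings against $\Deltafin$ and against $-\theta + \delta$ gives $\langle \alpha_i, \lambda \rangle \geq 0$ and $\langle \theta, \lambda \rangle \leq 0$; since $\langle \theta, \lambda \rangle = \sum_i a_i \langle \alpha_i, \lambda \rangle$, dominance forces $\lambda = 0$, so the level-zero part of $\Tits$ is contained in $\mathbb{Z}\delta$. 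Conversely each $m\delta$ is visibly dominant and $W$-fixed, recovering the $\mathbb{Z}\delta$ summand exactly.

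The substance of the proof is the reverse containment at positive level: every $\mu = \lambda + m\delta + l\Lambda_0$ with $l > 0$ lies in $\Tits$. Here I would use the standard characterization of the Tits cone (Kac, Ch.~3): $\mu \in \Tits$ if and only if $\langle \alpha, \mu \rangle < 0$ for only finitely many positive real roots $\alpha$. Taking the positive real roots to be $\beta + n\delta$ with $\beta \in \Phifin^+,\ n \geq 0$ and $-\beta + n\delta$ with $\beta \in \Phifin^+,\ n \geq 1$, and using $\langle \pm\beta + n\delta, \mu \rangle = \pm\langle \beta, \lambda \rangle + nl$, the pairing tends to $+\infty$ as $n \to \infty$ precisely because $l > 0$, so it is negative for only finitely many roots. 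Thus every such $\mu$ lies in $\Tits$ for arbitrary finite part $\lambda$ and arbitrary $m \in \mathbb{Z}$, which fills out the entire positive-level slice. The same computation at $l < 0$ produces infinitely many negative roots, reconfirming that negative levels are excluded, and at $l = 0$ recovers the condition $\lambda = 0$.

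The main obstacle is this reverse containment, and in particular justifying the finiteness characterization of the Tits cone if one prefers a self-contained argument to citing Kac; the cleanest route is a descent showing that whenever $\mu$ is not dominant some simple reflection strictly decreases the quantity $\sum_{\alpha > 0,\ \langle \alpha, \mu \rangle < 0} |\langle \alpha, \mu \rangle|$, which is finite at positive level, terminating at a dominant conjugate. A secondary point requiring care is the lattice bookkeeping: one must check that the finite parts occurring in $\Tits$ are exactly the coroot lattice $\mathbb{Z}\Phifin^{\vee}$, which follows from the translation lattice in $W \cong \Wfin \ltimes \mathbb{Z}\Phifin^{\vee}$ together with the admissible finite parts of the elements of $\Pdom^{\vee}$.
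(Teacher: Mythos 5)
Your argument is correct, but note that the paper offers no internal proof of this proposition at all: it is quoted verbatim from Kac's book (Prop.\ 5.8(b)), so the only comparison available is with the cited source. What you have written is essentially a reconstruction of that proof: the $W$-invariance of the level $\langle \delta, \cdot \rangle$, the computation $\langle \pm\beta + n\delta, \mu \rangle = \pm\langle \beta, \lambda \rangle + nl$ over the positive real roots, and the characterisation of the Tits cone as the locus where only finitely many positive real roots pair negatively (Kac, Prop.\ 3.12) are exactly the ingredients used there, and your descent on $\sum_{\langle \alpha, \mu \rangle < 0} |\langle \alpha, \mu \rangle|$ is a clean self-contained substitute for the latter. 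Two small points deserve the care you already flag. First, Kac's statement concerns the real Tits cone, so you should add the one-line observation that an integral coweight lying in the real cone is $W$-conjugate to a dominant \emph{integral} coweight and hence lies in $\Tits = \bigcup_{w} w(\Pdom^{\vee})$ as defined in the paper. Second, your positive-level argument shows that the finite parts range over all of $\Pfin^{\vee}$, whereas the proposition asserts $\mathbb{Z}\Phifin^{\vee}$; these coincide only because of the paper's choice of root datum (it writes $P^{\vee} = \Pfin^{\vee} \oplus \mathbb{Z}\delta \oplus \mathbb{Z}\Lambda_0$ and later takes $\Pfin^{\vee} = \mathbb{Z}\alpha^{\vee}$ for $\widehat{SL}_2$), so this is a convention to record rather than something your orbit computation needs to establish. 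Your level-zero analysis, combining $\langle \alpha_i, \lambda \rangle \geq 0$ for the finite simple roots with $\langle \theta, \lambda \rangle \leq 0$ to force $\lambda = 0$, is also correct.
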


Given $\mu = \lambda + m\delta + l\Lambda_0 \in \Tits$ and $x = w\epsilon^\mu$, we say that the level of $\mu$ (or the level of $x$) is $\lev(\mu) = \lev(x) = l$. Note that all elements of $\Tits$ have non-negative level, and those with zero level are a multiple of $\delta$. We say that an element of the form $m\delta \in \Tits$ is on the \textit{boundary} of the Tits cone.

A generic element $x \in \WTits$ is written as $x = w \varepsilon^{\mu} = w_0 \tau^{\lambda} \varepsilon^{\mu}$ for some $w \in W, \mu \in P^{\vee}, w_o \in \Wfin, \lambda \in \Pfin^{\vee}$. Recall the action of $W$ on $\Phi$ induced by the standard action of $\Wfin$ on $\Phifin$, given explicitly as

\vbls[-0.5]

\begin{equation}\label{eqn:w-action}
    w_0 \tau^{\lambda}( \alpha + n\delta) = w_0\alpha + (n - \langle \alpha, \lambda \rangle) \delta.
\end{equation} 

\vbls[-0.5]

\subsection{\texorpdfstring{$\mathbf{\widehat{SL}_2}$}{}}

Our goal is to show that Schremmer's formula defines a product on $\WTits$, resembling the Demazure product, which is independent of the choice of $(u,v) \in M_{x,y}$ and the path between them. We present a proof for type $\widehat{SL}_2$, which we fix for the rest of the paper unless stated otherwise.

We have $\Phifin = \{ \pm \alpha \}$ and $\Wfin = \langle s_1\ |\ s_1^2 = e \rangle \cong S_2$. The roots are $\Phi = \{\sgn(\alpha + n\delta)\ |\ n \in \mathbb{Z} \}$ with Weyl group $W = \langle s_0, s_1\ |\ s_0^2 = s_1^2 = e \rangle \cong \widetilde{S}_2$, the infinite Dihedral group. We also have the presentation $W \cong \Wfin \ltimes \mathbb{Z}\Phifin^{\vee} = \langle w_0 \tau^{\lambda}\ |\ w_0 \in \Wfin, \lambda \in \mathbb{Z}\alpha^{\vee} \rangle$, linked by the relation $s_0 = s_1 \tau^{-\alpha^{\vee}}$.
\vbls
\section{Classifying Length Positive Elements}
\subsection{The Length Functional}

Let $x,y \in \WTits$. In order to prove that the product $x *_{u,v}^p y$ as defined in equation \ref{schremform} is independent of the choice of $(u,v) \in M_{x,y}$, we need to be able to describe the length positive sets $\LP(x), \LP(y)$ from which $M_{x,y}$ is defined. To do this, we fix an arbitrary $x \in \WTits$ and expand on the conditions required for $u \in W$ to be length positive for $x$.

Let $x = w\varepsilon^{\mu} = w_0 \tau^{r\alpha^{\vee}} \varepsilon^{\mu}$ where $\mu = k\alpha^{\vee} + m\delta + l\Lambda_0$. Let $v = v_0 \tau^{t \alpha^{\vee}} \in \LP(x) \subseteq W$ for some $v_0 \in \Wfin, t \in \mathbb{Z}$. We seek the conditions on $v$ that force $v \in \LP(x)$. Fix $\tilde{\alpha} = \sgn(n)(\alpha + n\delta) \in \Phi^+$ as above for positive roots. Let $\sgn(v_0)$ denote the parity of $v_0$, such that $\sgn(e) = 1$ and $\sgn(s_1) = -1$. Expanding the action of $W$ on $\Phi$, we obtain the following:

\vbls[-1]

\begin{align}
    v\tilde{\alpha}
    &= \sgn(n)(v_0 \alpha + (n - 2t)\delta),\\
    wv\tilde{\alpha} 
    &= \sgn(n)( w_0 v_0 \alpha + (n - 2t - 2r\sgn(v_0))\delta),\\
    \langle \mu, v\tilde{\alpha} \rangle
    &= \sgn(n)(2k \sgn(v_0) + (n-2t)l). \label{expansion:bracket}
\end{align}

Each of these appear as terms in the length functional that we wish to consider:

\vbls[-0.5]

\begin{equation}\label{lfv}
     \ell(x, v\tilde{\alpha}) 
     = \langle \mu, v\tilde{\alpha} \rangle + \Phi^+(v\tilde{\alpha}) - \Phi^+(wv\tilde{\alpha})
     \geq 0,\ \ \forall \tilde{\alpha} \in \Phi^+.
\end{equation}

\begin{rmk}
    The indicator component of equation \ref{lfv}, $\Phi^+(v\tilde{\alpha}) - \Phi^+(w\tilde{\alpha})$, has absolute value at most 1. Hence, we must have $\langle \mu, v\tilde{\alpha} \rangle \geq -1$ for all $\tilde{\alpha} \in \Phi^+$. In particular, if we have $\langle \mu, v\tilde{\alpha} \rangle \geq 1$ for every positive root $\tilde{\alpha}$, then we immediately find that $v \in \LP(x)$.
\end{rmk}

The above remark suggests that we should split the calculation into cases concerning the minimum value attained by $\langle \mu, v\tilde{\alpha}\rangle$ as $\tilde{\alpha}$ varies over $\Phi^+$, denoted $\min\{\langle \mu, v\tilde{\alpha} \rangle\}_{\tilde{\alpha}}$. We deal with the boundary case separately, as $\mu$ behaves differently. Our cases to consider are:

\begin{enumerate}[label=(\alph*)]
    \item $l > 0, \
    \begin{cases}
        \text{(i)\phantom{ii}}\ \min\{\langle \mu, v\tilde{\alpha} \rangle\}_{\tilde{\alpha}} \geq 1. \\
        \text{(ii)\phantom{i}}\ \min\{\langle \mu, v\tilde{\alpha} \rangle\}_{\tilde{\alpha}} = 0. \\
        \text{(iii)}\ \min\{\langle \mu, v\tilde{\alpha} \rangle\}_{\tilde{\alpha}} = -1.
    \end{cases}$
    \item $l = 0\ $ (boundary case).
\end{enumerate}

In each case, we want to find equivalent conditions in terms of $t$ and $v_0$ which define $v$. It will be convenient to instead first find conditions on $\sgn(v_0)k - tl$, where $k, l, t, v_0$ are defined as before.

\subsection{Case (ai)} We require $\langle \mu, v\tilde{\alpha} \rangle \geq 1$ for any $\tilde{\alpha} = \sgn(n)(\alpha + n\delta) \in \Phi^+$. Each root is given by the corresponding integer $n$, and so we can equivalently enforce this condition for every $n \in \mathbb{Z}$. From equation \ref{expansion:bracket}, this condition becomes

\begin{equation}\label{condition:1}
    \sgn(n)(2k\sgn(v_0) + (n-2t)l) \geq 1,\ \ \forall n \in \mathbb{Z}.
\end{equation}

Note that if \ref{condition:1} holds for $n=0$, then it holds for all $n \geq 0$. Similarly, if it holds for $n=-1$, then it must hold for all $n \leq -1$. Hence, we can reduce this to just two inequalities.

\vbls[-0.5]

\begin{align}\label{condition:2}
    2k\sgn(v_0) -2tl &\geq 1, \\
    -2k\sgn(v_0) + (1+2t)l &\geq 1.
\end{align}

\vbls[-1]

\begin{equation}\label{eqn:inequality-ai}
    \implies\quad \half\ \leq\ k\sgn(v_0) - tl\ \leq\ \frac{l-1}{2}.
\end{equation}

\vbls[0.5]

Therefore, $\sgn(v_0)k - tl \in \{1, 2, ..., \lfloor \frac{l-1}{2} \rfloor\}$ in order to have $v \in \LP(x)$.

\subsection{Case (aii)} We now require $\langle \mu, v\tilde{\alpha} \rangle \geq 0$ for any $\tilde{\alpha} = \sgn(n)(\alpha + n\delta) \in \Phi^+$, attained for at least one root. As in the previous case, the inequalities for all $n$ can be reduced to just checking the cases $n=0, -1$, and so we get the following:

\begin{equation}
    0\ \leq\ k\sgn(v_0) - tl\ \leq\ \frac{l}{2}.
\end{equation}

Note the $n=0$ and $n=1$ cases correspond to the left and right hand inequalities respectively. One of these must be attained to ensure that $\min\{\langle \mu, v\tilde{\alpha}\rangle\}_{\tilde{\alpha}} = 0$, and so we get that $k \in \{\sgn(v_0)tl, \sgn(v_0)(\frac{l}{2} + tl)\}$ for $l$ even, and $k = \sgn(v_0)tl$ for $l$ odd. However, we need to check that the length positivity condition is also satisfied:

\vbls[-1]

\begin{align*}
    \ell(x, v\tilde{\alpha}) &= \langle \mu, v\tilde{\alpha} \rangle + \Phi^+(v\tilde{\alpha}) - \Phi^+(wv\tilde{\alpha}) \\
    &= \Phi^+(v\tilde{\alpha}) - \Phi^+(wv\tilde{\alpha})\ \geq\ 0
\end{align*}

\vbls[-1]

\begin{equation}
    \iff \Phi^+(v\tilde{\alpha}) = 1\ \ \text{or}\ \ \Phi^+(wv\tilde{\alpha}) = 0 .
\end{equation}

Expanding our form for $v\tilde{\alpha}$, we find that $\Phi^+(v\tilde{\alpha}) = 1 \iff \sgn(n)(v_0\alpha + (n - 2t)\delta) > 0$, where $n \in \{0,-1\}$ by the above. If $n=0$, then in order to ensure that $v_0\alpha - 2t\delta > 0$, we must have that $t \leq \half (\Phi^+(v_0 \alpha) - 1)$, or equivalently $t \leq \Phi^+(v_0\alpha) - 1$. If $n = -1$, we similarly obtain $t \geq \half (\Phi^+(v_0 \alpha) - 1)$, equivalent to $t \geq 0$.

Applying the same method to the requirement $\Phi^+(wv\tilde\alpha) = 0$, where $w = w_0\tau^{r\alpha^\vee}$, for $n=0$, $t$ must satisfy $t \geq \Phi^+(w_0v_0\alpha) - r\sgn(v_0)$, and for $n=-1$, $t$ must satisfy $t \leq -1 -r\sgn(v_0)$.

\vbls

Summarising this case, we find that $v \in \LP(x)$ if one of the following holds:

\begin{align}
    &\sgn(v_0)k - tl = 0,\ \textrm{and either}\ t \leq \Phi^+(v_0\alpha) - 1\ \textrm{or}\ t \geq \Phi^+(w_0v_0\alpha) - r\sgn(v_0) \label{rmk:case-aii-1} \\
    &\sgn(v_0)k - tl = \frac{l}{2}\ \textrm{(with $l$ even),\ and either}\ t \geq 0\ \textrm{or}\ t \leq -1 - r\sgn(v_0). \label{rmk:case-aii-2}
\end{align}

\subsection{Case (aiii)} Continuing on as before, we require $\langle \mu, v\tilde{\alpha} \rangle \geq -1$, attained for at least one root, which leads to the following double-sided inequality.

\begin{equation}
    -\half\ \leq\ \sgn(v_0)k - tl\ \leq\ \frac{l+1}{2}
\end{equation}

This must be attained on one side, and so we must have that $\sgn(v_0)k - tl = \frac{l+1}{2}$, with $l$ odd, corresponding to $n=-1$. For $n=0$, length positivity is ensured, since $\sgn(v_0)k - tl = \frac{l+1}{2} \geq \frac{1}{2}$ and so $\langle \mu, v\tilde{\alpha}\rangle \geq 1$ in this case. Checking the length positivity condition for $n=-1$, we must enforce:

\vbls[-0.5]

\begin{align*}
     &\Phi^+(v\tilde{\alpha}) - \Phi^+(wv\tilde{\alpha}) \geq 1 \\
     \iff\ &\Phi^+(v\tilde{\alpha}) = 1\ \text{and}\ \Phi^+(wv\tilde{\alpha}) = 0.
\end{align*}

Using calculations done in case (aii), this occurs for $n=-1$ when $0 \leq t \leq -1 - r\sgn(v_0)$, giving the requirements for $v \in \LP(x)$ in this case.

\subsection{Case (b)} Now let $l = 0$, and so $\mu \in \mathbb{Z}\delta$, giving us that $k = 0$ and hence $\langle \mu, v\tilde{\alpha} \rangle = 0$. The length positivity condition then becomes the same as in case (aii), namely that either $\Phi^+(v\tilde{\alpha}) = 1$ or $\Phi^+(wv\tilde{\alpha}) = 0$, however now we must have that at least one of these holds for every $n \in \mathbb{Z}$. Instead of looking at inequalities as above, we can classify the length positive set in this case using inversion sets, defined for $u \in W$ as $\Inv(u) = \{ \tilde{\alpha} \in \Phi^+\ |\ u\tilde{\alpha} < 0 \}$. We use these to prove a simple description of $\LP(x)$.

\begin{prop}\label{prop:boundary-lp}
    Let $x = w\varepsilon^{m\delta} \in \WTits$ be a boundary element of the Tits cone, where $w \in W$ and $m \in \mathbb{Z}$. If $w \neq e$, write $w = w's_i$, with $w' < w$ and $i \in \{0,1\}$. Then
    \begin{equation}
        \LP(x) = 
        \begin{cases}
            \{ v \in W\ |\  s_iv > v \}, & \text{if}\ w \neq e. \\
            W, & \text{otherwise.}
        \end{cases}
    \end{equation}
\end{prop}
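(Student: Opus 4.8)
The plan is to reduce length positivity to a condition on inversion sets, and then exploit the rigidity of reduced words in the infinite dihedral group $W \cong \widetilde{S}_2$. Since $x$ lies on the boundary, $\mu = m\delta$ has $k = l = 0$, so Equation \ref{expansion:bracket} gives $\langle \mu, v\tilde{\alpha} \rangle = 0$ for every $\tilde{\alpha} \in \Phi^+$. The length functional therefore collapses to $\ell(x, v\tilde{\alpha}) = \Phi^+(v\tilde{\alpha}) - \Phi^+(wv\tilde{\alpha})$, and $v \in \LP(x)$ precisely when there is no $\tilde{\alpha} \in \Phi^+$ with $v\tilde{\alpha} < 0$ and $wv\tilde{\alpha} > 0$; equivalently, $w(v\tilde{\alpha}) < 0$ whenever $v\tilde{\alpha} < 0$.

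First I would rewrite this in terms of inversion sets. Using the standard identity $\Inv(v^{-1}) = \{-v\tilde{\alpha} : \tilde{\alpha} \in \Inv(v)\}$, as $\tilde{\alpha}$ ranges over $\Inv(v)$ the positive root $\gamma := -v\tilde{\alpha}$ ranges over $\Inv(v^{-1})$, and the requirement $w(v\tilde{\alpha}) < 0$ becomes $\gamma \notin \Inv(w)$. Hence
\[
v \in \LP(x) \iff \Inv(v^{-1}) \cap \Inv(w) = \emptyset.
\]
I would then invoke the general Coxeter identity $\ell(wv) = \ell(w) + \ell(v) - 2\,|\Inv(w) \cap \Inv(v^{-1})|$, valid in any Coxeter group since each inversion set is finite of size equal to the length. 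This converts the emptiness condition into $\ell(wv) = \ell(w) + \ell(v)$, so that $v \in \LP(x)$ exactly when the concatenation of reduced words for $w$ and $v$ is reduced.

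Finally I would specialise to $\widetilde{S}_2$. For $w = e$ the equality $\ell(ev) = \ell(e) + \ell(v)$ is automatic, giving $\LP(x) = W$. For $w \neq e$, the crucial feature is that every nonidentity element of $\widetilde{S}_2$ has a \emph{unique} reduced word, which alternates in $s_0, s_1$; consequently $\ell(wv) = \ell(w) + \ell(v)$ holds if and only if the last letter of $w$ differs from the first letter of $v$. Writing $w = w's_i$ with $w' < w$ identifies $s_i$ as this last letter, and the first letter of $v$ equals $s_i$ exactly when $s_iv < v$. Therefore $v \in \LP(x) \iff s_iv > v$, as claimed.

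I expect the main obstacle to be precisely the step that makes the conclusion clean: \emph{a priori} the condition $\Inv(v^{-1}) \cap \Inv(w) = \emptyset$ depends on all of $w$, yet the final answer depends only on its last letter $s_i$. The reason this succeeds is the uniqueness and alternating shape of reduced words in the infinite dihedral group, so I would take care to justify the concatenation criterion — noting that a repeated letter $s_is_i$ forces a cancellation that then propagates through the remainder of the word — which is exactly the dihedral phenomenon that has no analogue in higher rank.
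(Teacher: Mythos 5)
Your proof is correct and follows essentially the same route as the paper's: reduce the boundary case to the inversion-set condition $\Inv(v^{-1}) \cap \Inv(w) = \emptyset$, translate this into length additivity $\ell(wv) = \ell(w) + \ell(v)$, and then use the uniqueness of (alternating) reduced words in the infinite dihedral group to conclude that this holds exactly when $s_i v > v$. The only cosmetic difference is that you cite the general identity $\ell(wv) = \ell(w) + \ell(v) - 2\lvert\Inv(w) \cap \Inv(v^{-1})\rvert$ where the paper invokes the equivalence more tersely.
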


\begin{proof}
Note that $v \notin \LP(x)$ if and only if $\Phi^+(v\tilde{\alpha}) = 0$ and $\Phi^+(wv\tilde{\alpha}) = 1$ for some $\tilde{\alpha} \in \Phi^+$. This occurs if and only if  $v\tilde{\alpha} < 0$ and $w(-v\tilde{\alpha}) < 0$, equivalent to $\tilde{\alpha} \in \Inv(v)$ and $-v\tilde{\alpha} \in \Inv(w)$, or $-v\tilde{\alpha} \in -v\Inv(v) \cap \Inv(w)$.  Hence $v \notin \LP(x) \iff \exists \tilde{\alpha} \in \Inv(w) \cap -v\Inv(v) \iff \Inv(w) \cap -v\Inv(v) \neq \emptyset$. Negating this statement, and noting that from \cite{Humphreys_1990} we have $-v\Inv(v) = \Inv(v^{-1})$, we find the description $\LP(x) = \{v \in W\ |\ \Inv(w) \cap \Inv(v^{-1}) = \emptyset \}$. 
We have that $\Inv(w) \cap \Inv(v^{-1})\ \iff\ \ell(wv) = \ell(v) + \ell(w)$, i.e. $v, w$ are length additive, and so we have

\begin{equation}
    \LP(x) = \{v \in W\ |\ \ell(wv) = \ell(v) + \ell(w)\}.
\end{equation}

\vbls[0.5]

Note that this equation holds for any general $\WTits$, and not necessarily just the case of $\widehat{SL}_2$. First note that if $w = e$, then this condition holds for all $v \in W$, and so $\LP(x) = W$. 

If we now fix $\WTits$ to be of type $\widehat{SL}_2$ with $w \neq e$, then we have an easy description of length additivity. In particular, as there are no braid relations in $W$, we have that $v$ and $w$ are length additive if and only if, in reduced expressions for $v$ and $w$, $v = s_j v'$ and $w = w's_i$ with $v > v', w > v'$, and $j \neq i$. This occurs if and only if $s_iv > v$, proving the claim.
\end{proof}

\begin{rmk}\label{rmk:boundary-lp-details}
    We can describe $\LP(w\varepsilon^{m\delta})$ as consisting of the half of $W$ whose first simple reflection in a reduced expression is not the last simple reflection in a reduced expression for $w$. In particular, it is clear that $|\LP(x)| = \infty$.
\end{rmk}

\begin{cor}
    Let $x = w\epsilon^{m\delta} \in \WTits$. Then $\LP(x)$ is connected in $\QBG(W)$.
\end{cor}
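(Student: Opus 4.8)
The plan is to reduce the statement entirely to the explicit description of $\LP(x)$ already obtained in Proposition \ref{prop:boundary-lp} and Remark \ref{rmk:boundary-lp-details}, and then exhibit a concrete connecting path inside $\LP(x)$ built only from Bruhat edges. Since the weight-zero edges of $\QBG(W)$ coincide with the covering relations of the Bruhat order, it suffices to produce, for the subgraph induced on $\LP(x)$, a sequence of covering relations all of whose endpoints lie in $\LP(x)$ and which links every vertex of $\LP(x)$ to a single base vertex. I would first dispose of the case $w = e$: here Proposition \ref{prop:boundary-lp} gives $\LP(x) = W$, and the proposition guaranteeing a path $w_1 \Rightarrow w_2$ between any two vertices shows that the entire graph, and hence $\LP(x)$, is connected.

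For $w \neq e$, write $w = w's_i$ as in the proposition, so that $\LP(x) = \{v \in W : s_i v > v\}$. By Remark \ref{rmk:boundary-lp-details} this is exactly $\{e\}$ together with every element of $W$ whose reduced word begins with $s_{1-i}$. Because every nonidentity element of the infinite dihedral group $W$ has a unique alternating reduced word, I can list $\LP(x)$ concretely as the chain $v_0 = e,\ v_1 = s_{1-i},\ v_2 = s_{1-i}s_i,\ v_3 = s_{1-i}s_i s_{1-i},\dots$, where $v_k$ has length $k$. The key observation is that $v_{k+1} = v_k s_j$ for the appropriate alternating simple reflection $s_j$, with $\ell(v_{k+1}) = \ell(v_k) + 1$; each such step is therefore a Bruhat edge $v_k \to v_{k+1}$ of $\QBG(W)$ (condition (i), taken with the simple and hence positive root $\alpha_j$). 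Since all the $v_k$ lie in $\LP(x)$, these weight-zero edges already connect every element of $\LP(x)$ within the induced subgraph, giving connectivity.

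I do not expect a genuine obstacle, as the substantive work is carried by Proposition \ref{prop:boundary-lp}; the only point requiring care is confirming that the list in Remark \ref{rmk:boundary-lp-details} is totally ordered with consecutive elements forming covering relations, which is immediate from uniqueness of reduced words in $\widetilde{S}_2$. The one place where the argument could need strengthening is if ``connected'' is intended in the strong, directed sense, in which case I would additionally have to supply downward (quantum) edges returning from the higher $v_k$ into $\LP(x)$; however, the weight-zero chain constructed above already yields weak connectivity of the induced subgraph, which I take to be the intended meaning.
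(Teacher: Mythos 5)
Your proof is correct and follows essentially the same route as the paper, which justifies the corollary by observing graphically that $\LP(x)$ is an entire left or right side of $\QBG(W)$ connected by vertical edges; you simply make that observation rigorous by writing out the chain $e, s_{1-i}, s_{1-i}s_i,\dots$ and checking that consecutive elements are joined by Bruhat edges. Your closing caveat about directed versus undirected connectivity is reasonable, and the paper's usage (here and in Corollary \ref{cor:LPx-connected}) is consistent with the weak reading you adopt.
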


This can be seen graphically in Fig. \ref{fig:full-QBG}, where $\LP(x)$ consists of an entire left or right side, which are connected by vertical edges.

\subsection{Algorithm for computing \texorpdfstring{$\LP(x)$}{}}

We can combine the results from above to give a short algorithm for computing $\LP(x)$. Firstly, if $l = 0$, we use Prop. \ref{prop:boundary-lp}, giving us $\LP(x)$ immediately. However, we are more interested in the non-boundary cases. Here we state and prove an algorithm using the above results for computing $\LP(x)$.

\begin{prop}\label{prop:LP-algorithm}
    Let $x = w_0\tau^{r\alpha^{\vee}}\epsilon^{k\alpha^{\vee} + m\delta + l\Lambda_0} \in \WTits$, with $l > 0$. Then $\LP(x)$ can be computed using the following algorithm.
    \begin{enumerate}
        \item Compute $t \in \mathbb{Z}$ such that $-\frac{l}{2} < k - tl \leq \frac{l}{2}$, and write $j = k - tl$.
        \item If $1 \leq \sgn(v_0)j \leq \frac{l-1}{2}$ for some choice of $v_0 \in \Wfin$, then $\tau^{t\alpha^{\vee}}v_0 \in \LP(x)$.
        \item If $j = 0$, then $\tau^{t\alpha^{\vee}} \in \LP(x)$ if either $t \leq 0 $ or $t \geq \Phi^+(w_0\alpha) - r$, and $\tau^{t\alpha^{\vee}}s_1 \in \LP(x)$ if either $t \geq 1$ or $t \leq \Phi^+(w_0\alpha) - r - 1$.
        \item If $j = \frac{l}{2}$, then $\tau^{t\alpha^{\vee}} \in \LP(x)$ if either $t \geq 0$ or $t \leq -1-r$, and $\tau^{t\alpha^{\vee}}s_0 \in \LP(x)$ if either $t \leq -1$ or $t \geq -r$.
        \item If $j = -\sgn(v_0)\frac{l-1}{2}$ for some choice of $v_0 \in \Wfin$, then $\tau^{t\alpha^{\vee}}s_0 \in \LP(x)$ if $v_0 = s_1$ and $-r \leq t \leq -1$, and $\tau^{t\alpha^{\vee}}s_1s_0 \in \LP(x)$ if $v_0 = e$ and $1 \leq t \leq -r$.
    \end{enumerate}
    After applying step 5, we have found all elements of $\LP(x)$.
\end{prop}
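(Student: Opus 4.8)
The plan is to recognize Proposition \ref{prop:LP-algorithm} as a bookkeeping consolidation of the case analysis already carried out in Cases (ai)--(aiii), specialized to $l > 0$, after reparametrizing each $v \in W$ and normalizing its translation part. I would first set up a dictionary between the two parametrizations in play. Writing an arbitrary $v \in W$ in the algorithm's form $v = \tau^{t\alpha^\vee}v_0$ and comparing with the form $v = v_0\tau^{t'\alpha^\vee}$ used in the preceding subsections, the relation $v_0\tau^{\lambda} = \tau^{v_0\lambda}v_0$ together with $s_1\alpha^\vee = -\alpha^\vee$ gives $t' = \sgn(v_0)t$. Hence the quantity $\sgn(v_0)k - t'l$ that governs Cases (ai)--(aiii) becomes
\[
q := \sgn(v_0)k - t'l = \sgn(v_0)(k - tl),
\]
and from Equation \ref{expansion:bracket} the minimum of $\langle \mu, v\tilde\alpha\rangle$ over $\tilde\alpha \in \Phi^+$ equals $\min(2q,\ l - 2q)$, attained at $n = 0$ and $n = -1$ respectively. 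This single computation drives the entire argument.

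Next I would record the necessary condition. Since the indicator term $\Phi^+(v\tilde\alpha) - \Phi^+(wv\tilde\alpha)$ lies in $\{-1,0,1\}$, length positivity forces $\min(2q, l - 2q) \geq -1$, i.e. $q \in \{0, 1, \ldots, \lfloor (l+1)/2\rfloor\}$, which is exactly the list of cases: $q \in \{1, \ldots, \lfloor(l-1)/2\rfloor\}$ is Case (ai) (automatically length positive), $q = 0$ and $q = l/2$ are the two branches of (aii), and $q = (l+1)/2$ (only for $l$ odd) is Case (aiii). Step 1's normalization $j = k - tl \in (-l/2, l/2]$ fixes the unique representative of $k \bmod l$ in that window, so the $q$-values realized by the various translates $\tau^{(t+s)\alpha^\vee}v_0$ are precisely $\sgn(v_0)(j + sl)$, $s \in \mathbb{Z}$.

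I would then match the steps one at a time. Step 2 is Case (ai): the only translate with $q \in \{1,\ldots,\lfloor(l-1)/2\rfloor\}$ is $s = 0$, giving the single element $\tau^{t\alpha^\vee}v_0$ with $\sgn(v_0)j = q$. Steps 3 and 4 are the two branches of (aii): for $j = 0$ both $v_0 = e$ and $v_0 = s_1$ realize $q = 0$ at $s = 0$, and for $j = l/2$ both $v_0 = e$ (at $s=0$) and $v_0 = s_1$ (at $s = +1$) realize $q = l/2$; translating the one-sided conditions on $t'$ from \ref{rmk:case-aii-1} and \ref{rmk:case-aii-2} through $t' = \sgn(v_0)t$ yields the stated inequalities, and rewriting the $v_0 = s_1$ translate via $s_0 = s_1\tau^{-\alpha^\vee}$, i.e. $\tau^{t\alpha^\vee}s_0 = \tau^{(t+1)\alpha^\vee}s_1$, produces the $\tau^{t\alpha^\vee}s_0$ of Step 4. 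Step 5 is Case (aiii): here $q = (l+1)/2$ lies outside the normalized window, so it is realized only at $s = \pm 1$ --- by $v_0 = s_1$ when $j = (l-1)/2$ and by $v_0 = e$ when $j = -(l-1)/2$ --- and the relations $\tau^{t\alpha^\vee}s_0 = \tau^{(t+1)\alpha^\vee}s_1$ and $\tau^{t\alpha^\vee}s_1s_0 = \tau^{(t-1)\alpha^\vee}$ put these in the stated form, the Case (aiii) range $0 \leq t' \leq -1 - r\sgn(v_0)$ transforming into $-r \leq t \leq -1$ and $1 \leq t \leq -r$ respectively.

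Finally, for completeness I would check that each allowed $q$ is realized by exactly the element(s) the algorithm outputs and by no others. For $v_0 = e$ the value $q = k - tl \equiv j \pmod l$, and for $v_0 = s_1$ it is $\equiv -j$; hence a given $q$ is matched by both signs of $v_0$ only when $j \equiv -j$, i.e. $j \in \{0, l/2\}$, which is precisely what accounts for the two elements listed in Steps 3 and 4, while every other allowed $q$ matches a single $v_0$ and a single translate. I expect the main obstacle to be exactly this reparametrization bookkeeping: keeping the conventions $\tau^{t\alpha^\vee}v_0$ and $v_0\tau^{t'\alpha^\vee}$ straight, and handling the Case (aiii) elements (and the $v_0 = s_1$ branches of Steps 4 and 5) whose $q$-value falls outside the fundamental window $(-l/2, l/2]$ and so must be re-expressed through $s_0 = s_1\tau^{-\alpha^\vee}$, rather than any genuinely hard estimate --- all the analytic content is already contained in the formula $\min(2q, l-2q)$.
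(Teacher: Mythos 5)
Your proposal is correct and follows essentially the same route as the paper: both reduce the algorithm to the case analysis of Cases (ai)--(aiii) via the reparametrization between $\tau^{t\alpha^\vee}v_0$ and $v_0\tau^{t'\alpha^\vee}$ (with $t'=\sgn(v_0)t$) and the normalization $j=k-tl\in(-l/2,l/2]$, then check that each allowed value of $\sgn(v_0)k-t'l$ is hit by exactly the listed elements. Your writeup is somewhat more systematic than the paper's (the explicit formula $\min(2q,\,l-2q)$ and the residue-mod-$l$ completeness check make the exhaustiveness argument cleaner), but the substance is identical.
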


\newpage 

\begin{proof}
    We first show that each step of the algorithm holds true. 
    \begin{enumerate}
        \item Such a $t$ exists, and is unique by the Euclidean algorithm.
        \item If $j > 0$, then this immediately holds by equation \ref{eqn:inequality-ai}, with $v_0 = e$. If $j < 0$, then $-j = -k - (-t)l$, and so we apply equation $\ref{eqn:inequality-ai}$ with $v_0 = s_1$, using $-t$ instead of $t$.
        \item Follows from \ref{rmk:case-aii-1}, noting $\sgn(v_0)j = 0$ regardless of the choice of $v_0$, and if we choose $v_0 = s_1$ then we need to replace $t$ with $-t$ using the same logic as in the proof of 2. 
        \item Follows from \ref{rmk:case-aii-2}, noting that if $j = \frac{l}{2}$ then $-j = -\frac{l}{2}$, and so $-k - (-t-1)l = \frac{l}{2}$ also satisfies the required conditions with $v_0 = e$ and using $-(t+1)$ instead of $t$.
        \item Follows from Case (aiii) above, applying the same logic as in step 4 to shift $t$ to meet the required conditions.
    \end{enumerate}
    As $t$ is unique, we can get at most one length positive element from each case (for a particular $v_0$). Hence, as we have covered all possible values of $j$ in the given range, we must have found all elements of $\LP(x)$.
\end{proof}

Below, we show two examples of length positive sets in type $\widehat{SL}_2$, corresponding to the elements $x = \tau^{-\alpha^{\vee}}\varepsilon^{-2\alpha^{\vee}+\delta+4\Lambda_0}$ and $y = s_1\tau^{-\alpha^{\vee}}\varepsilon^{\alpha^{\vee} - \delta + \Lambda_0}$ respectively. Using the above algorithm, it can be computed that $\LP(x) = \{s_1, s_1s_0\}$ and $\LP(y) = \{e, s_0, s_0s_1\}$. More detailed calculations of some length positive sets are given in section \ref{sec:examples}.

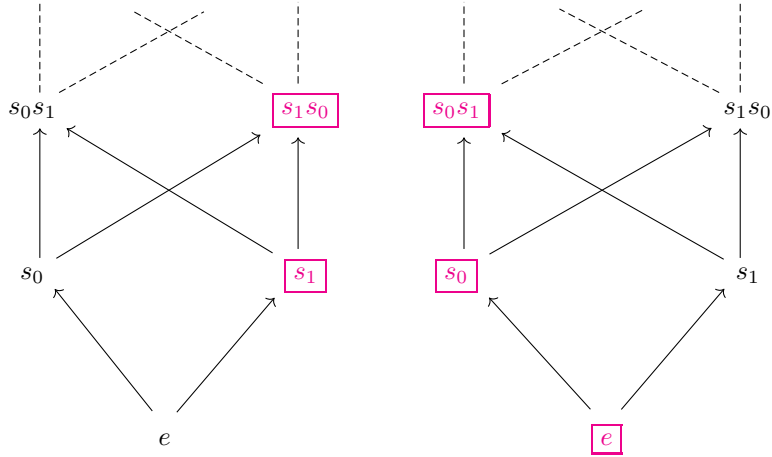
\begin{figure}[ht]
    \centering
    \begin{tikzcd}[row sep=1.5cm, column sep=1.05cm]
        \ && \ \\
        s_0 s_1
        \arrow[u, dashed, dash, shorten >= 0.4cm, shift right = 0.1cm] 
        \arrow[urr, dashed, dash, shorten >= 1.3cm]
        & & \textcolor{magenta}{\boxed{s_1 s_0}}
        \arrow[ull, dashed, dash, shorten >= 1.3cm] 
        \arrow[u, dashed, dash, shorten >= 0.4cm, shift left = 0.1cm] \\
        s_0
        \arrow[u, shift right = 0.1cm] 
        \arrow[urr]
        & & \textcolor{magenta}{\boxed{s_1}}
        \arrow[ull] 
        \arrow[u, shift left = 0.1cm] \\
        & e \vphantom{\boxed{e}}
        \arrow[ul, shift right = 0.1cm]
        \arrow[ur, shift left = 0.1cm]
    \end{tikzcd}
    \qquad 
    \begin{tikzcd}[row sep=1.5cm, column sep=1.05cm]
        \ && \ \\
        \textcolor{magenta}{\boxed{s_0 s_1}}
        \arrow[u, dashed, dash, shorten >= 0.4cm, shift right = 0.1cm] 
        \arrow[urr, dashed, dash, shorten >= 1.3cm]
        & & s_1 s_0
        \arrow[ull, dashed, dash, shorten >= 1.3cm] 
        \arrow[u, dashed, dash, shorten >= 0.4cm, shift left = 0.1cm] \\
        \textcolor{magenta}{\boxed{s_0}}
        \arrow[u, shift right = 0.1cm] 
        \arrow[urr]
        & & s_1
        \arrow[ull] 
        \arrow[u, shift left = 0.1cm] \\
        & \textcolor{magenta}{\boxed{e}}
        \arrow[ul, shift right = 0.1cm]
        \arrow[ur, shift left = 0.1cm]
    \end{tikzcd}     
    \caption{For $W$ of type $\widehat{SL}_2$, the length positive sets $\LP(\tau^{-\alpha^{\vee}}\varepsilon^{-2\alpha^{\vee}+\delta+4\Lambda_0})$ and $\LP(s_1\tau^{-\alpha^{\vee}}\varepsilon^{\alpha^{\vee} - \delta + \Lambda_0})$ respectively, each depicted within the Hasse diagram for $\widehat{SL}_2$.}
    \label{fig:LP-examples}
\end{figure}

\vbls[0.5]

This classification of $\LP(x)$ for each $x \in \WTits$ gives us some immediate corollaries.

\begin{cor}\label{cor:LPx-size}
    Let $x \in \WTits$ with $\lev(x)>0$. Then $1 \leq |\LP(x)| \leq 2$ if $\lev(x) \neq 1$, and $1 \leq |\LP(x)| \leq 3$ if $\lev(x) = 1$. 
\end{cor}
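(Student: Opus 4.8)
The plan is to read the bound directly off the classification algorithm of Prop.~\ref{prop:LP-algorithm}. I would fix $x = w_0\tau^{r\alpha^\vee}\varepsilon^{k\alpha^\vee + m\delta + l\Lambda_0}$ with $l = \lev(x) > 0$ and let $t, j$ be as in step~1, so that $j = k - tl$ is the unique representative of $k$ modulo $l$ in $(-\tfrac{l}{2}, \tfrac{l}{2}]$; the key structural point to emphasise is that $t$, and hence $j$, is \emph{uniquely} determined by $x$. Every element of $\LP(x)$ has the form $\tau^{T\alpha^\vee}v_0$ with $v_0 \in \Wfin = \{e, s_1\}$, and by the case analysis of Section~\ref{sec:prelim} it is detected by one of steps~2--5 according to the value $J = \sgn(v_0)k - Tl$ it produces, where the admissible values are exactly $J \in \{0, 1, \dots, \lceil l/2\rceil\}$ (cases (ai), (aii), (aiii) respectively). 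The lower bound $|\LP(x)| \geq 1$ I would dispatch first: for every value of $j$, at least one of the disjunctive conditions in the applicable step holds automatically (for instance, in step~3 one always has $t \leq 0$ or $t \geq 1$).

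For the upper bound when $l \neq 1$ (so $l \geq 2$), the main idea is a residue count. The admissible set $\{0, 1, \dots, \lceil l/2\rceil\}$ consists of $\lceil l/2\rceil + 1 \leq l$ consecutive integers, hence these values are pairwise distinct modulo $l$. Since for a fixed $v_0$ the quantity $J = \sgn(v_0)k - Tl$ runs through a single residue class modulo $l$ as $T$ varies, at most one $T$ can land in the admissible window for each of the two choices $v_0 \in \{e, s_1\}$. This yields at most two candidate elements, which are necessarily distinct since they have different finite parts, so $|\LP(x)| \leq 2$. Equivalently, I could argue step by step that for each fixed $j$ the steps that can fire contribute at most two elements in total (step~2 alone; step~3 alone; step~4 alone; or steps~2 and~5 together when $|j| = \tfrac{l-1}{2}$, which produce one element each with distinct finite parts).

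The case $l = 1$ is where the window argument degenerates, and I expect this to be the main obstacle. Here $j = 0$ is forced, and since all integers are congruent modulo $1$ the admissible values $J \in \{0, 1\}$ no longer separate residue classes: a priori each of $v_0 = e$ and $v_0 = s_1$ could contribute one element with $J = 0$ (case (aii)/step~3) and one with $J = 1$ (case (aiii)/step~5), giving four candidates. Step~3 does contribute up to two elements, $\tau^{k\alpha^\vee}$ and $\tau^{k\alpha^\vee}s_1$, so the crux is to show step~5 contributes at most one. Its two branches require $-r \leq k \leq -1$ (for $v_0 = s_1$) and $1 \leq k \leq -r$ (for $v_0 = e$) respectively, forcing $r \geq 1$ and $r \leq -1$; these are incompatible, so at most one branch fires. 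Finally, using the identities $\tau^{t\alpha^\vee}s_0 = \tau^{(t+1)\alpha^\vee}s_1$ and $\tau^{t\alpha^\vee}s_1 s_0 = \tau^{(t-1)\alpha^\vee}$ to rewrite the step~5 output in the $\{e, s_1\}$ normal form, I would check it is distinct from the two step~3 elements, concluding $|\LP(x)| \leq 3$.
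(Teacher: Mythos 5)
Your proof is correct and follows essentially the same route as the paper: both read the bounds directly off the classification algorithm of Prop.~\ref{prop:LP-algorithm}, using the uniqueness of $t$ (hence of $j$) to limit how many elements each step can contribute. Your two refinements — packaging the $l \geq 2$ count as a residue argument modulo $l$, and showing that the two branches of step~5 force the incompatible conditions $r \geq 1$ and $r \leq -1$ when $l = 1$ — are both sound, and the latter supplies a detail that the paper's proof asserts (``step 5 can return at most 1 more element'') without justification.
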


\begin{proof}
    Fix $j = k - tl$ as in Prop. \ref{prop:LP-algorithm}, and first assume that $l \neq 1$. Then if $j \notin \{0, \frac{l}{2}, \pm\frac{l-1}{2}\}$, step 2 gives us exactly one element of $\LP(x)$.
    
    If $j = 0$, then we certainly have a length positive element if $t \leq 0$. If instead $t \geq 0$, assume that $t \leq -(\Phi^+(w_0\alpha) - r)$. Then $\Phi^+(w_0\alpha) - r \leq 0$, and so $t \geq \Phi^+(w_0\alpha) - r$. Hence at least one condition is satisfied. There are also clearly at most two possibilities for length positive elements.
    
    If $j = \frac{l}{2}$, then similarly to before we have a length positive element if $t \geq 0$. If instead $t \leq -1$, and $t \geq -r$, then we must have $r \leq 1$ and so $t \leq -1 + r$ is guaranteed. Hence, at least one condition is satisfied, and again there are at most two possibilities.
    
    If $j = \pm\frac{l-1}{2}$, then we are guaranteed a length positive element from case 2, and can only include at most one more element from this final step, giving at most two cases.

    If instead $l = 1$, then we find no length positive elements from step 2 of Prop. \ref{prop:LP-algorithm}, however we have $j = 0 = \frac{l-1}{2}$. Step 3 returns either 1 or 2 elements depending on the values of $t, r$, and step 5 can return at most 1 more element, giving $1 \leq |\LP(x)| \leq 3$.  
\end{proof}

\begin{cor}\label{cor:LPx-connected}
    Let $x \in \WTits$ with $\lev(x) > 0$. Then $\LP(x)$ is connected in $\QBG(W)$ by edges of the form $w \rightarrow ws_i$.
\end{cor}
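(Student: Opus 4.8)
The plan is to first pin down the subgraph of $\QBG(W)$ that uses only edges of the form $w \rightarrow ws_i$ with $s_i$ a simple reflection, and then read off $\LP(x)$ from Prop. \ref{prop:LP-algorithm}. In type $\widehat{SL}_2$ every $w\neq e$ has a unique reduced word alternating in $s_0,s_1$, and right multiplication by a simple reflection changes the length by exactly $\pm 1$; moreover $w\rightarrow ws_i$ is always an edge of $\QBG(W)$, a Bruhat edge in one direction and a quantum edge in the other, since $\langle 2\rho,\beta^\vee\rangle = 2$ for either simple coroot $\beta^\vee$. Hence the subgraph retaining only these edges is the doubly-infinite path
\[
\cdots - s_0s_1 - s_0 - e - s_1 - s_1s_0 - \cdots .
\]
I would fix the resulting bijection $W\to\mathbb{Z}$ with $e\mapsto 0$, the right ray $s_1,s_1s_0,\dots\mapsto 1,2,\dots$ and the left ray $s_0,s_0s_1,\dots\mapsto -1,-2,\dots$; in a path graph a subset induces a connected subgraph precisely when its vertices are consecutive. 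So the corollary reduces to showing the output of Prop. \ref{prop:LP-algorithm} always occupies an interval of positions.

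Next I would record the positions of the four element-types in the algorithm, rewriting them as translations via $s_0 = s_1\tau^{-\alpha^\vee}$: one computes $\tau^{t\alpha^\vee}\mapsto -2t$, $\ \tau^{t\alpha^\vee}s_1\mapsto -2t+1$, $\ \tau^{t\alpha^\vee}s_0 = \tau^{(t+1)\alpha^\vee}s_1\mapsto -2t-1$, and $\tau^{t\alpha^\vee}s_1s_0 = \tau^{(t-1)\alpha^\vee}\mapsto -2t+2$. For $\lev(x)=l\neq 1$ this finishes the argument immediately: Cor. \ref{cor:LPx-size} gives $|\LP(x)|\le 2$, and in each branch the two candidates form one of the pairs $\{\tau^{t\alpha^\vee},\tau^{t\alpha^\vee}s_1\}$ (step 3), $\{\tau^{t\alpha^\vee}s_0,\tau^{t\alpha^\vee}\}$ (step 4, or steps 2 and 5 with $j=\tfrac{l-1}{2}$), or $\{\tau^{t\alpha^\vee}s_1,\tau^{t\alpha^\vee}s_1s_0\}$ (steps 2 and 5 with $j=-\tfrac{l-1}{2}$). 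In every case the two candidates differ by right multiplication by a single simple reflection, so they occupy adjacent positions and are joined by an edge $w\rightarrow ws_i$.

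The main obstacle is the level-one case, where $j=0=\tfrac{l-1}{2}$, only steps 3 and 5 contribute, and there are four candidate types $A=\tau^{t\alpha^\vee}s_0$, $B=\tau^{t\alpha^\vee}$, $C=\tau^{t\alpha^\vee}s_1$, $D=\tau^{t\alpha^\vee}s_1s_0$ at the consecutive positions $-2t-1<-2t<-2t+1<-2t+2$. Since Cor. \ref{cor:LPx-size} only bounds $|\LP(x)|\le 3$, I must forbid a gap, and I would do so by extracting three implications directly from the presence conditions of steps 3 and 5: (i) $A\in\LP(x)$ forces $-r\le t\le -1$, hence $t\le 0$ and so $B\in\LP(x)$; (ii) $D\in\LP(x)$ forces $1\le t\le -r$, hence $t\ge 1$ and so $C\in\LP(x)$; (iii) $A$ requires $r\ge 1$ while $D$ requires $r\le -1$, so $A$ and $D$ are never simultaneously present. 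These rule out every non-interval subset of $\{A,B,C,D\}$ of size at most three: a gap at $B$ would need $A$ without $B$ (contradicting (i)), a gap at $C$ would need $D$ without $C$ (contradicting (ii)), and the remaining non-interval possibilities all contain $\{A,D\}$, excluded by (iii). Thus the present elements are consecutive and therefore connected. The only routine parts are the position computations and the elementary arithmetic behind the three implications.
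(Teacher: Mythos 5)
Your proof is correct and follows essentially the same strategy as the paper's: enumerate the candidates produced by Prop.~\ref{prop:LP-algorithm} and check that whatever subset survives is joined by simple-reflection edges. Your framing of the simple-edge subgraph as a doubly-infinite path, with connectedness recast as occupying an interval of positions, is a clean way to organise the casework, and your implications (i) and (ii) in the level-one case --- that $\tau^{t\alpha^\vee}s_0 \in \LP(x)$ forces $\tau^{t\alpha^\vee} \in \LP(x)$, and that $\tau^{t\alpha^\vee}s_1s_0 \in \LP(x)$ forces $\tau^{t\alpha^\vee}s_1 \in \LP(x)$ --- make explicit a point the paper's two-element case passes over without comment, namely why the non-adjacent pairs $\{\tau^{t\alpha^\vee}s_0, \tau^{t\alpha^\vee}s_1\}$ and $\{\tau^{t\alpha^\vee}, \tau^{t\alpha^\vee}s_1s_0\}$ cannot arise at level one. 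So the argument is the paper's, carried out with slightly more care.
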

\begin{proof}
    Firstly, we take $|\LP(x)| > 1$, otherwise the statement is true trivially. 
    
    If $|\LP(x)| = 2$, then following the algorithm, the only possible pairs of elements we find are sets of the form  $\{\tau^{t\alpha^{\vee}}, \tau^{t\alpha^{\vee}}s_1\},\{\tau^{t\alpha^{\vee}}, \tau^{t\alpha^{\vee}}s_0\}$, and $\{\tau^{t\alpha^{\vee}}s_1, \tau^{t\alpha^{\vee}}s_1s_0\}$, each of which are of the form $\{w, ws_i\}$ for $w \in W,\ i \in \{0, 1\}$, which are connected in $\QBG(W)$ by a single edge.

    If $|\LP(x)| = 3$, then we must have $l = 1$ and $j = 0 = \pm\frac{l-1}{2}$, forcing $\{\tau^{t\alpha^{\vee}}, \tau^{t\alpha^{\vee}}s_1\} \subset \LP(x)$ and either $\tau^{t\alpha^{\vee}}s_0 \in \LP(x)$ or $\tau^{t\alpha^{\vee}}s_1s_0 \in \LP(x)$. Hence, the triple of elements we find are connected in $\QBG(W)$ either as $\tau^{t\alpha^{\vee}}s_0 \rightarrow \tau^{t\alpha^{\vee}} \rightarrow \tau^{t\alpha^{\vee}}s_1$ or as $\tau^{t\alpha^{\vee}} \rightarrow \tau^{t\alpha^{\vee}}s_1 \rightarrow \tau^{t\alpha^{\vee}}s_1s_0$ as required.
\end{proof}

\begin{cor}\label{cor:no-degen-3lp}
    Let $x \in \WTits$ with $|\LP(x)| = 3$. Then $\LP(x) \neq \{s_1, e, s_0\}$.
\end{cor}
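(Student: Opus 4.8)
The plan is to read the structure of $\LP(x)$ straight off the algorithm in Prop. \ref{prop:LP-algorithm}, using the case $|\LP(x)| = 3$ already isolated in Cor. \ref{cor:LPx-size} and Cor. \ref{cor:LPx-connected}. The guiding observation is a length mismatch: every element of $\{s_1, e, s_0\}$ has length at most one, whereas a three-element length positive set is forced to contain a pure translation $\tau^{t\alpha^{\vee}}$ with $t \neq 0$, which has length $2|t| \ge 2$. So the whole argument reduces to showing that $t = 0$ is incompatible with $|\LP(x)| = 3$.

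First I would invoke Cor. \ref{cor:LPx-size} to reduce to $\lev(x) = l = 1$, the only level producing $|\LP(x)| = 3$. With $l = 1$, step 1 of Prop. \ref{prop:LP-algorithm} forces $j = k - tl = 0$, and since $\tfrac{l-1}{2} = 0$ only steps 3 and 5 can contribute: step 2 would require $1 \le \sgn(v_0)j \le 0$, which is impossible, and step 4 requires the non-integer value $j = \tfrac{1}{2}$. Step 3 supplies at most the two elements $\tau^{t\alpha^{\vee}}$ and $\tau^{t\alpha^{\vee}}s_1$, while step 5 supplies at most one of $\tau^{t\alpha^{\vee}}s_0$ and $\tau^{t\alpha^{\vee}}s_1 s_0$ (these need $t \le -1$ and $t \ge 1$ respectively, hence are mutually exclusive). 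Therefore, to reach three elements, both step-3 elements must appear \emph{and} step 5 must contribute; in particular $\tau^{t\alpha^{\vee}} \in \LP(x)$.

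The crux is then the sign condition in step 5: it contributes exactly when $t \le -1$ or $t \ge 1$, so $t \neq 0$. Consequently $\ell(\tau^{t\alpha^{\vee}}) = 2|t| \ge 2$, using $\tau^{\alpha^{\vee}} = s_0 s_1$ and that alternating words are reduced in the infinite dihedral group $W$. Since no element of $\{s_1, e, s_0\}$ has length $2$ or more, $\tau^{t\alpha^{\vee}} \notin \{s_1, e, s_0\}$, and hence $\LP(x) \neq \{s_1, e, s_0\}$. I do not anticipate a genuine obstacle; the content is bookkeeping against the algorithm. The single point requiring care is ruling out the degenerate attempt to realise $\{s_1, e, s_0\}$ as the $t = 0$ instance of the configuration $\{\tau^{t\alpha^{\vee}}s_0, \tau^{t\alpha^{\vee}}, \tau^{t\alpha^{\vee}}s_1\}$ of Cor. \ref{cor:LPx-connected}: forcing the central translation to be $e$ demands $s_0 \in \LP(x)$ from step 5 with $t = 0$, which the range $-r \le t \le -1$ forbids, confirming $t \neq 0$ and closing the argument.
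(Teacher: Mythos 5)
Your proof is correct and takes essentially the same route as the paper: both reduce to $\lev(x)=1$ via Cor.~\ref{cor:LPx-size}, force $j=0$, and read the four candidates off Prop.~\ref{prop:LP-algorithm}. The only difference is the closing contradiction --- the paper notes that $e \in \LP(x)$ forces $t \in \{0,1\}$ and then excludes $s_1$ (if $t=1$) or $s_0$ (if $t=0$) from the candidate list, whereas you observe that the third element must come from step 5, which forces $t \neq 0$ and hence puts a translation of length $2|t| \geq 2$ into $\LP(x)$; both are valid one-line endgames from the same algorithm.
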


\begin{proof}
    If $|\LP(x)| = 3$, then $\lev(x) = 1$. Hence, using the notation of Prop. \ref{prop:LP-algorithm}, $j = 0$ and so $\LP(x) \subset \{\tau^{t\alpha^{\vee}}, \tau^{t\alpha^{\vee}}s_1, \tau^{t\alpha^{\vee}}s_0, \tau^{t\alpha^{\vee}}s_1s_0\}$. For $e \in \LP(x)$, we therefore require $t = 0$ or $t = 1$. If $t = 1$, then $s_1 \notin \LP(x)$. If $t = 0$, then step 5 of the classification algorithm guarantees that $s_0 \notin \LP(x)$.
\end{proof}
\vbls
\section{Generalised Demazure Product for \texorpdfstring{$\widehat{SL}_2$}{}}
\subsection{Quantum Bruhat Graph of type \texorpdfstring{$\widehat{SL}_2$}{}}

The main goal of this section is to show that the product $x *_{u,v}^p y$ is independent of the choice of $u, v$ and $p$ for $x, y \in \WTits$, $G = \widehat{SL}_2$. Firstly, we investigate the quantum Bruhat graph for $W$, $\QBG(W)$. This was studied by Amanda Welch \cite{Welch19}, who gave the following description of the edges.

\begin{prop}
    Let $W = \langle s_0, s_1\ |\ s_0^2 = s_1^2 = e \rangle$ be the Weyl group of type $\widehat{SL}_2$. Then the upwards edges are of the form $v \rightarrow vs_{\tilde{\alpha}}$ for $\tilde{\alpha} \in \Phi^+$, where $\ell(vs_{\tilde{\alpha}}) = \ell(v) + 1$, and the downwards edges are of the form $v \rightarrow vs_{\tilde{\alpha}}$ for $\tilde{\alpha} \in \Delta = \{\alpha, \alpha[-1]\}$, where $\ell(vs_{\tilde{\alpha}}) = \ell(v) - 1$.
\end{prop}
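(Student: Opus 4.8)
The plan is to split into upward and downward edges, with all the content lying in the downward case. For the upward (Bruhat) edges there is nothing to prove: by definition a type (i) edge $v \to vs_{\tilde\alpha}$ is one with $\ell(vs_{\tilde\alpha}) = \ell(v) + 1$, allowed for any $\tilde\alpha \in \Phi^+$, which is precisely the first assertion. So I would focus entirely on showing that a downward (quantum) edge $v \to vs_{\tilde\alpha}$, whose defining condition is $\ell(vs_{\tilde\alpha}) = \ell(v) + 1 - \langle 2\rho, \tilde\alpha^\vee\rangle = \ell(v) + 1 - 2\height(\tilde\alpha)$, can only occur when $\tilde\alpha$ is simple, in which case it collapses to $\ell(vs_{\tilde\alpha}) = \ell(v) - 1$.

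The key preliminary step is to compute the length of each reflection, and I would establish the identity $\ell(s_{\tilde\alpha}) = \height(\tilde\alpha)$ for every positive real root $\tilde\alpha$ of $\widehat{SL}_2$ (so in particular every such height is odd). Since $W$ is the infinite dihedral group, its reflections are exactly the odd-length alternating palindromes, two of each length $2m+1$, and the conjugation formula $s_{w\alpha_i} = w s_i w^{-1}$ lets me index them by roots. I would run an induction on $m$: passing from the palindrome of length $2m-1$ to that of length $2m+1$ conjugates by one generator $s_i$, raising the word length by $2$ and replacing the root $\tilde\beta$ by $s_i(\tilde\beta) = \tilde\beta - \langle \tilde\beta, \alpha_i^\vee\rangle\alpha_i$. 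Because the off-diagonal Cartan entry of $\widehat{SL}_2$ is $-2$ and the relevant pairing is exactly $-2$ at each stage, this adds $2\alpha_i$ to the root and hence raises its height by $2$ as well. Anchoring at the two length-one reflections $s_1 = s_\alpha$ and $s_0 = s_{\alpha[-1]}$, whose roots $\alpha$ and $\alpha[-1] = \delta - \alpha$ both have height $1$, length and height then increase in lockstep, giving the identity.

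With this in hand the conclusion is a short comparison of integers. Subadditivity of the length function, applied to $\ell(v) = \ell((vs_{\tilde\alpha})s_{\tilde\alpha}) \le \ell(vs_{\tilde\alpha}) + \ell(s_{\tilde\alpha})$, yields the lower bound $\ell(vs_{\tilde\alpha}) \ge \ell(v) - \height(\tilde\alpha)$. Writing $h = \height(\tilde\alpha)$, the quantum-edge condition demands $\ell(vs_{\tilde\alpha}) = \ell(v) + 1 - 2h$, which is strictly below $\ell(v) - h$ precisely when $h > 1$; this is incompatible with the lower bound. Hence no quantum edge can come from a non-simple root (for which $h \ge 3$), while for a simple root $h = 1$ the condition reduces to $\ell(vs_{\tilde\alpha}) = \ell(v) - 1$, which is consistent and holds exactly when $vs_{\tilde\alpha} < v$. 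This is the claimed description, with $\tilde\alpha$ ranging over $\Delta = \{\alpha, \alpha[-1]\}$.

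I expect the main obstacle to be the reflection-length identity $\ell(s_{\tilde\alpha}) = \height(\tilde\alpha)$, as everything afterwards is immediate. This identity, together with the elementary fact that $h < 2h - 1$ for $h > 1$, is exactly the point at which the rank-two structure of $\widehat{SL}_2$ is used: the Cartan entry $-2$ forces reflection length and height to grow together, so that no non-simple root ever attains the value $\ell(s_{\tilde\alpha}) = 2\height(\tilde\alpha) - 1$ needed to realise a quantum edge. In other types some non-simple roots (for instance the highest root in finite type) do attain this extremal length, which is why downward edges there are not confined to the simple roots.
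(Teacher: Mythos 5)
Your argument is correct. Note that the paper itself offers no proof of this proposition: it is quoted from Welch's description of $\QBG(W)$ for $\widehat{SL}_2$, so there is no in-paper argument to compare against. Your route is a clean, self-contained one: the upward half is indeed vacuous (it is the definition of a Bruhat edge), and the substance is your identity $\ell(s_{\tilde{\alpha}}) = \height(\tilde{\alpha})$ for every positive real root of $\widehat{SL}_2$, combined with subadditivity of length to rule out quantum edges for non-simple roots. The induction is sound: writing the root of the length-$(2m-1)$ reflection as $a\alpha_1 + b\alpha_0$ with $|a-b|=1$, the conjugating generator is always the one with the smaller coefficient, so the pairing is $2\min(a,b) - 2\max(a,b) = -2$ as you claim; spelling out that one line would make the step airtight, and likewise a sentence confirming that the two chains anchored at $s_1$ and $s_0$ exhaust all reflections (equivalently, all positive real roots, two of each odd height). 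Your closing remark correctly identifies the general mechanism — quantum roots are exactly those with $\ell(s_{\beta}) = \langle 2\rho,\beta^{\vee}\rangle - 1$, and in $\widehat{SL}_2$ only the simple roots attain this — which is consistent with the paper's later remark that quantum roots are harder to compute in general type.
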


We depict this graph in Fig. \ref{fig:full-QBG}, with upwards edges depicted in black with zero weight, and downwards edges in magenta labelled with their weight. Note that downwards edges are purely vertical (excepting at $e$), as they only appear associated to reflections with simple affine roots. Following this graphical description, we refer to an element $w \in W$ as being \textit{left-sided} if $s_1w > w$, and \textit{right-sided} if $s_0w > w$. Note that the identity $e$ is both left and right-sided. We call an edge $u \rightarrow v$ \textit{vertical} if $u, v$ are same-sided, and call it \textit{diagonal} $u, v$ are opposite-sided. Note again that paths to and from $e$ are simultaneously vertical and diagonal. In this language, we immediately find the following proposition from Corollaries \ref{cor:LPx-connected}, \ref{cor:no-degen-3lp}.

\begin{prop}
    Let $x \in \WTits$ with $\lev(x) > 0$. The $\LP(x)$ is either entirely left-sided, or entirely right-sided.
\end{prop}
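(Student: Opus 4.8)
The plan is to reduce the notion of sidedness to a purely combinatorial statement about reduced words in $W$, and then to read off the conclusion from the explicit description of $\LP(x)$ furnished by the preceding corollaries. Since $W$ is the infinite dihedral group with no braid relations, every $w \neq e$ has a unique reduced expression, namely an alternating word in $s_0$ and $s_1$. Such a $w$ is left-sided (that is, $s_1 w > w$) exactly when this word begins with $s_0$, and right-sided exactly when it begins with $s_1$, while $e$ is simultaneously both. Consequently the only way $\LP(x)$ could fail to be entirely left-sided or entirely right-sided is if it were to contain one element whose reduced word begins with $s_0$ together with a second, distinct from $e$, whose reduced word begins with $s_1$.

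Next I would invoke Corollary \ref{cor:LPx-size} to bound $|\LP(x)| \leq 2$ when $\lev(x) \neq 1$ and $|\LP(x)| \leq 3$ when $\lev(x) = 1$, combined with the explicit shapes of $\LP(x)$ extracted in the proof of Corollary \ref{cor:LPx-connected}. The key observation is that right-multiplication by a single generator $s_i$ alters the leftmost letter of a reduced word only when the element being multiplied is $e$ itself or collapses to $e$; for any $w$ with $w \neq e \neq ws_i$, the elements $w$ and $ws_i$ share their first letter and hence their side. Thus every two-element length positive set --- necessarily of the form $\{\tau^{t\alpha^{\vee}}, \tau^{t\alpha^{\vee}}s_1\}$, $\{\tau^{t\alpha^{\vee}}, \tau^{t\alpha^{\vee}}s_0\}$ or $\{\tau^{t\alpha^{\vee}}s_1, \tau^{t\alpha^{\vee}}s_1s_0\}$ --- lies on a single side: when $t \neq 0$ the common leftmost letter of $\tau^{t\alpha^{\vee}}$ dictates the side, and when $t = 0$ the three sets degenerate to $\{e, s_0\}$, $\{e, s_1\}$ and $\{s_1, s_1s_0\}$, which are left-, right- and right-sided respectively, once we recall that $e$ counts on both sides.

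The remaining case is $|\LP(x)| = 3$, which by Corollary \ref{cor:LPx-size} forces $\lev(x) = 1$; here the proof of Corollary \ref{cor:LPx-connected} shows $\LP(x)$ is either $\{\tau^{t\alpha^{\vee}}s_0, \tau^{t\alpha^{\vee}}, \tau^{t\alpha^{\vee}}s_1\}$ or $\{\tau^{t\alpha^{\vee}}, \tau^{t\alpha^{\vee}}s_1, \tau^{t\alpha^{\vee}}s_1s_0\}$. For $t \neq 0$ the same leftmost-letter argument applies verbatim, and the second family reduces at $t = 0$ to $\{e, s_1, s_1s_0\}$, which is right-sided. I expect the only genuine obstacle to be the degenerate configuration $\{s_0, e, s_1\}$ arising from the first family at $t = 0$: this is the unique shape that straddles both sides through the ambidextrous element $e$, and it can be eliminated neither by connectivity nor by the size bound alone. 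This is precisely where Corollary \ref{cor:no-degen-3lp} enters, asserting that $\LP(x) \neq \{s_1, e, s_0\}$, which rules out the last mixed configuration and completes the argument.
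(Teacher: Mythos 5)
Your proof is correct and takes essentially the same route as the paper, which deduces the proposition directly from Corollaries \ref{cor:LPx-connected} and \ref{cor:no-degen-3lp}; your leftmost-letter argument simply spells out the deduction the paper leaves implicit, correctly identifying $\{s_0, e, s_1\}$ as the unique mixed configuration that connectivity and the size bound cannot exclude and that Corollary \ref{cor:no-degen-3lp} is needed to rule out.
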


\begin{rmk}
    For $W$ of general type, upwards (Bruhat) edges in $\QBG(W)$ are easy to construct, corresponding to covers in the Bruhat order. On the other hand, downwards (quantum) edges, which correspond to \textit{quantum roots}, are seemingly much more difficult to compute. We mention that H\'ebert and Philippe have recently studied these roots in more detail \cite{Hebert-Philippe24}.
\end{rmk}

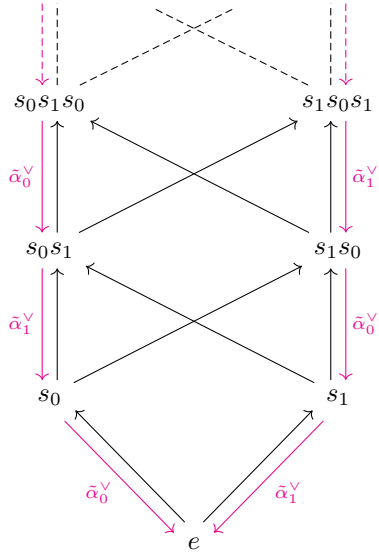
\begin{figure}
    \centering
    \begin{tikzcd}[row sep=1.5cm, column sep=1.05cm]
        \
        \arrow[d, magenta, shift right = 0.1cm, dashed, shorten <= 0.4cm]
        && \ 
        \arrow[d, magenta, shift left = 0.1cm, dashed, shorten <= 0.4cm] \\
        s_0 s_1 s_0
        \arrow[u, dashed, dash, shorten >= 0.4cm, shift right = 0.1cm] 
        \arrow[urr, dashed, dash, shorten >= 1.3cm]
        \arrow[d, magenta, shift right = 0.1cm, "\tilde{\alpha}_0^{\vee}"']
        & & s_1 s_0 s_1
        \arrow[ull, dashed, dash, shorten >= 1.3cm] 
        \arrow[u, dashed, dash, shorten >= 0.4cm, shift left = 0.1cm]
        \arrow[d, magenta, shift left = 0.1cm, "\tilde{\alpha}_1^{\vee}"] \\
        s_0 s_1
        \arrow[u, shift right = 0.1cm] 
        \arrow[urr]
        \arrow[d, magenta, shift right = 0.1cm, "\tilde{\alpha}_1^{\vee}"']
        & & s_1 s_0
        \arrow[ull] 
        \arrow[u, shift left = 0.1cm]
        \arrow[d, magenta, shift left = 0.1cm, "\tilde{\alpha}_0^{\vee}"] \\
        s_0
        \arrow[u, shift right = 0.1cm] 
        \arrow[urr]
        \arrow[dr, magenta, shift right = 0.1cm, "\tilde{\alpha}_0^{\vee}"']
        & & s_1
        \arrow[ull] 
        \arrow[u, shift left = 0.1cm]
        \arrow[dl, magenta, shift left = 0.1cm, "\tilde{\alpha}_1^{\vee}"] \\
        & e 
        \arrow[ul, shift right = 0.1cm]
        \arrow[ur, shift left = 0.1cm]
    \end{tikzcd}
    \caption{$\QBG(W)$ for $W$ of type $\widehat{SL}_2$.}
    \label{fig:full-QBG}
\end{figure}

\vbls 

\subsection{Shortest Paths in \texorpdfstring{$\QBG(W)$}{}}
We first aim to show that $*_{u,v}^p = *_{u,v}$, i.e. that the generalised Demazure product for $\widehat{SL}_2$ is independent of the path $p$ chosen. We instead prove an alternate statement, showing that the weights of any two shortest paths must be equal, from which path-independence follows.

\begin{thm}\label{thm:same-weight}
    Let $u, v \in W$ of type $\widehat{SL}_2$ and let $p_1, p_2:u\rightarrow v$ be any two shortest paths between them. Then $\wt(p_1) = \wt(p_2)$.
\end{thm}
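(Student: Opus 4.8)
The plan is to exploit the very rigid structure of $\QBG(W)$ for $\widehat{SL}_2$ visible in Fig.~\ref{fig:full-QBG}: the graph is essentially two vertical ``ladders'' (the left-sided and right-sided elements) joined at $e$, with downward (quantum) edges being purely vertical except at $e$, and all edges emanating from or arriving at a given vertex being tightly constrained. Since weights only come from the downward edges, and each downward edge carries the coroot $\tilde\alpha_i^\vee$ of the simple affine root $s_i$ it uses, I would first argue that the weight of any path is completely determined by \emph{how many} downward edges of each type (labelled $\tilde\alpha_0^\vee$ versus $\tilde\alpha_1^\vee$) it traverses, together with the behaviour at $e$. So the theorem reduces to showing that any two shortest paths $u \to v$ use the same multiset of downward-edge labels.

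First I would set up coordinates on $W$: write each nonidentity element by its side (left/right) and its ``height'' $\ell(w)$, so that the vertices form two rays indexed by length, meeting at $e$. Every vertical segment of a path is forced: to move up one rung is a Bruhat edge of weight $0$, to move down one rung is a quantum edge whose weight is the unique simple coroot attached to that rung (alternating $\tilde\alpha_0^\vee, \tilde\alpha_1^\vee$ as one descends, as the figure shows). The only place a path can switch sides is through $e$ (the diagonal edges all pass through the bottom). I would then observe that a shortest path can cross $e$ at most once --- any path crossing $e$ twice could be shortcut --- which splits the analysis into the case where $u,v$ are on the same side (the path stays on one ladder, possibly dipping to $e$) and the case where they are on opposite sides (the path crosses $e$ exactly once).

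With this reduction, the core computation is a \emph{telescoping} argument on a single ladder. On one side, label the rungs so that descending from height $h$ to height $h-1$ always contributes a fixed coroot depending only on the parity of $h$. Then for a path that ends at the same height it started (a ``loop'' or balanced excursion), each block of descents is matched by an equal number of ascents, and the net weight contributed by any maximal descending run telescopes to $\langle 2\rho, \cdot\rangle$-controlled quantities that depend only on the starting and ending heights, not on the intermediate wiggling. Concretely, I expect to show that for fixed endpoints $u,v$ on a single side, the total weight of \emph{any} path equals (number of net descents)$\,\times$(the appropriate alternating sum of simple coroots), which is path-independent; minimality of the path then pins down that this net-descent count is exactly $d(u\Rightarrow v)$ minus the forced ascents. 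The same bookkeeping handles a single crossing of $e$ by concatenating two such one-sided analyses.

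The main obstacle I anticipate is handling the vertex $e$ cleanly, since it is the unique point where the purely-vertical description of quantum edges breaks down (the edges $s_0 \to e$ and $s_1 \to e$ are diagonal, and $e$ has downward edges to both $s_0$ and $s_1$). I would need to verify carefully that a shortest path is never forced to linger at $e$ or to backtrack across it, and that the two coroots $\tilde\alpha_0^\vee$ and $\tilde\alpha_1^\vee$ emanating downward from $e$ are accounted for consistently regardless of which side the path enters and exits on. A secondary subtlety is confirming that no shortest path ever contains a ``down-then-immediately-up'' (or up-then-down) reversal on the same rung, since such a detour would strictly increase length and hence cannot occur in a shortest path --- this is what ultimately forces the descent/ascent counts to be rigid. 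Once these local facts at and near $e$ are established, the telescoping gives $\wt(p_1)=\wt(p_2)$ immediately.
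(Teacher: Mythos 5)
Your proposal rests on a structural claim about $\QBG(W)$ that is false, and the error sits exactly where the theorem has its only real content. You assert that ``the only place a path can switch sides is through $e$ (the diagonal edges all pass through the bottom).'' But the upward (Bruhat) edges are $v \to v s_{\tilde{\alpha}}$ for \emph{every} positive root $\tilde{\alpha}$ with $\ell(vs_{\tilde{\alpha}}) = \ell(v)+1$, not just the simple ones; in the infinite dihedral group each vertex of length $n$ is covered by \emph{both} elements of length $n+1$, so every vertex has an upward diagonal edge to the opposite ladder (e.g.\ $s_0 \to s_0\cdot(s_0s_1s_0) = s_1s_0$), as Fig.~\ref{fig:full-QBG} shows. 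Only the \emph{downward} edges are confined to simple reflections and hence vertical. Consequently, in the one case where shortest paths are genuinely non-unique --- $u$ and $v$ opposite-sided with $u \nleq v$ --- a shortest path consists of vertical downward edges plus exactly one upward diagonal edge, and that diagonal can be taken at any height along the descent; such paths need never pass through $e$ at all. Your case split (``same side: stay on one ladder, possibly dipping to $e$; opposite sides: cross $e$ exactly once'') therefore fails to describe the actual family of shortest paths whose weights must be compared.

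This also undermines the telescoping step: the downward-edge weights at a given height \emph{differ} between the two ladders (at height $3\to 2$ the left ladder contributes $\tilde{\alpha}_0^{\vee}$ and the right ladder $\tilde{\alpha}_1^{\vee}$, and vice versa one rung lower), so a single-ladder computation that ``depends only on the starting and ending heights'' does not by itself show that descending partly on one ladder and partly on the other gives the same total. What is actually needed --- and what the paper does --- is a local exchange argument: slide the unique diagonal edge one step earlier or later and observe that the two swapped edges carry the same coroot $\tilde{\alpha}_i^{\vee}$ (once on each ladder, at adjacent heights), so the total weight is unchanged; iterating connects all shortest paths. The remaining cases ($u \le v$: all-upward paths, weight $0$; $u \ge v$ same-sided: the unique vertical descent) are as easy as you suggest, but the heart of the proof is precisely the cross-ladder comparison your setup rules out.
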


\begin{proof}
    First, note that upwards edges correspond to a length increase of one, and downwards edges correspond to a length decrease of one. Hence, for a path $p:u\rightarrow v$, we have that $\ell(p) = |\ell(v) - \ell(u)|$ if and only if $p$ consists of solely upwards or downwards edges, in which case it must be shortest. Note also that if we can find such a path, then all shortest paths must be of the same form.
    
    If $u \leq v$, then by definition there must be a path consisting of only upwards edges (arising form the corresponding Hasse diagram for $W$), and hence all shortest paths have this property. The weight of any such path is zero, and so all paths have the same weight.

    If $u \geq v$, and $u,v$ are same-sided, then there exists a unique shortest path consisting of downwards vertical edges.

    If none of the above cases hold, $u,v$ are not same-sided and $u \nleq v$. Then it is clear from the diagram that a shortest path consists of vertical downwards edges and one upwards diagonal edge. Fix one such path $p$, and consider the path $p'$ consisting of the same edges, but with the diagonal taken one edge earlier (assuming this path is still shortest). Then the weight of the subpath being removed is $\tilde{\alpha}_i^{\vee} + 0$ for some $i \in \{0,1\}$, and the weight of the subpath being added is $0 + \tilde{\alpha}_i^{\vee}$ for the same $i$, as shown in the graph. Hence, $\wt(p') = \wt(p) - (\tilde{\alpha}_i^{\vee} + 0) + (\tilde{\alpha}_i^{\vee} + 0) = \wt(p)$. A similar argument applies if we consider the path taken with one edge later, and iterating this shows that all shortest paths must have the same weight.
\end{proof}

We henceforth write $\wt(u \Rightarrow v)$ for the weight of any shortest path $u \rightarrow v$. From this, we immediately obtain the following corollary.

\begin{cor}
    For type $\widehat{SL}_2$, the generalised Demazure product $*_{u,v}^p = *_{u,v}$ is independent of $p$.
\end{cor}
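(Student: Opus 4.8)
The plan is to observe that the chosen path $p$ enters Definition \ref{def:gen-dem-prod} in exactly one place, namely the term $v\wt(p)$ appearing in the coweight exponent. Writing out
\begin{equation}
    x *_{u,v}^p y = w_x u v^{-1} \varepsilon^{vu^{-1}\mu_x + \mu_y - v\wt(p)},
\end{equation}
we see that the Weyl component $w_x u v^{-1}$ does not involve $p$ at all, and for two choices $p_1, p_2$ of shortest path the coweight exponents differ precisely by $v(\wt(p_2) - \wt(p_1))$. Hence it suffices to show that $\wt(p_1) = \wt(p_2)$ whenever $p_1, p_2 : u \rightarrow w_y v$ are both shortest paths between the same pair of vertices.

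This is exactly the content of Theorem \ref{thm:same-weight}, applied to the pair of vertices $u$ and $w_y v$ in $\QBG(W)$. Since $p$ is by definition a shortest path between these two fixed endpoints, any other shortest choice has the same total weight, so $v\wt(p_1) = v\wt(p_2)$ and the two products agree. Thus $*_{u,v}^p$ is independent of $p$, justifying the abbreviated notation $*_{u,v}$.

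I expect no genuine obstacle here: all the substantive work has already been carried out in Theorem \ref{thm:same-weight}, and the corollary reduces to a one-line substitution once one notices that $p$ occurs only inside $\wt(p)$. The only point requiring a moment's care is to confirm that the relevant endpoints are $u$ and $w_y v$ rather than $u$ and $v$; but Theorem \ref{thm:same-weight} is stated for an arbitrary pair of vertices of $\QBG(W)$, so it applies verbatim to this case.
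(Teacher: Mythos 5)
Your argument is correct and is exactly the paper's: the corollary is stated as an immediate consequence of Theorem \ref{thm:same-weight}, since $p$ enters Definition \ref{def:gen-dem-prod} only through $\wt(p)$ and any two shortest paths $u \rightarrow w_y v$ have equal weight. Your added care about identifying the endpoints as $u$ and $w_y v$ is a reasonable check but does not change the substance.
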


The following fact about paths in $\QBG(W)$ will also be useful when showing that the generalised Demazure product is independent of the choice of $(u,v) \in M_{x,y}$.

\begin{thm}\label{thm:vertical-edge}
    Let $x \in \WTits, \lev(x) > 0$, and let $u \in W$. Then $\ell(u \Rightarrow v)$ is minimised by a unique $v \in \LP(x)$, as is $\ell(v \Rightarrow u)$.
\end{thm}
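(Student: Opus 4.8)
The plan is to make the geometry of $\QBG(W)$ completely explicit and then observe that $\LP(x)$ is too small and too ``straight'' to admit two equidistant points. By the left--right symmetry of $\QBG(W)$ (interchanging $s_0$ and $s_1$) together with the fact that $\LP(x)$ is entirely one-sided when $\lev(x)>0$, I would assume without loss of generality that $\LP(x)$ lies on the right-hand column $r_0=e,\ r_1=s_1,\ r_2=s_1s_0,\dots$, where $\ell(r_h)=h$. Corollary \ref{cor:LPx-connected} then forces $\LP(x)$ to be a block of consecutive heights $\{r_{h_0},\dots,r_{h_0+n-1}\}$ with $n=|\LP(x)|\le 3$.

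The heart of the argument is a distance formula, extracted directly from Fig.\ \ref{fig:full-QBG}: every vertex of height $\ge 1$ has a single downward edge (to the same-sided vertex one level below, reaching $e$ from $s_0$ or $s_1$) and two upward edges, one vertical and one diagonal, so a single up-step may land on \emph{either} side at the next height. Counting steps (each changes height by $\pm1$, and since down-steps preserve the side, any change of side costs at least one up-step) and exhibiting matching paths yields, for $\ell(u)=m$,
\[
d(u\Rightarrow r_h)=\begin{cases}|h-m| & u \text{ right-sided or } u=e,\\ |h-m-1|+1 & u \text{ left-sided},\ h\ge 1,\end{cases}
\]
together with the special value $d(u\Rightarrow e)=m$; the reverse distances satisfy $d(r_h\Rightarrow u)=|h-m|$ in the same-sided case and $|h-m+1|+1$ in the opposite-sided case, now with $h=0$ obeying the same formula. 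In every instance $d$ has the shape $|h-c|+\mathrm{const}$ for an \emph{integer} $c$, evaluated at the consecutive integers $h=h_0,\dots,h_0+n-1$. Such a function is strictly unimodal with steps of size exactly $1$, and because $c$ is an integer it never takes equal values at two consecutive integers; hence it has a unique minimiser on any block of consecutive heights, giving uniqueness whenever the clean formula applies throughout the chain.

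The one discrepancy is the exceptional value $d(u\Rightarrow e)=m$, which lies two below the extrapolation of the left-sided formula; this can create a tie (between $e$ and the height-$2$ element) only if $\LP(x)$ is a triple containing the identity, so the main obstacle is to exclude such a triple. I expect to do this from the classification: by Corollary \ref{cor:LPx-size} a triple forces $\lev(x)=1$, while Prop.\ \ref{prop:LP-algorithm} shows that obtaining a third element (via its Step 5) requires $t\neq 0$, whereas $e\in\LP(x)$ forces $t=0$; Corollary \ref{cor:no-degen-3lp} excludes the one remaining degenerate triple. Hence $e\in\LP(x)$ forces $\LP(x)=\{e,s_i\}$ or a singleton, and a direct check of these finitely many configurations confirms a unique minimiser in both directions, completing the proof.
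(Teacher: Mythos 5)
Your distance formulas for $\QBG(W)$ are correct, and the unimodality argument does dispose of every configuration except the one you flag: a one-sided triple containing $e$. This is a genuinely different route from the paper, which instead argues by parity of $d(u\Rightarrow v)$ versus $d(u\Rightarrow vs_i)$ for pairs and by a case analysis of when the two outer elements of a triple are equidistant. The problem is that your exclusion of the dangerous triple is wrong. You argue that a third element can only come from Step 5 of Prop.~\ref{prop:LP-algorithm} with $t\neq 0$, while $e\in\LP(x)$ forces $t=0$; but $e$ can itself be the element \emph{produced} by Step 5, namely as $\tau^{\alpha^\vee}s_1s_0=e$ when $t=1$. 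Concretely, for $y=\varepsilon^{\Lambda_0}s_0=s_1\tau^{-\alpha^\vee}\varepsilon^{\alpha^\vee-\delta+\Lambda_0}$ one gets $t=1$, $j=0$, and $\LP(y)=\{s_0s_1,\,s_0,\,e\}$ --- this exact set is computed in Section~\ref{sec:examples} of the paper. Corollary~\ref{cor:no-degen-3lp} only rules out the two-sided triple $\{s_1,e,s_0\}$, not this one.

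Worse, in that configuration the tie you were worried about actually occurs, so the gap cannot be patched by a finer case check. Take $\LP(x)=\{e,s_0,s_0s_1\}$ and $u=s_1s_0$. Then $d(s_1s_0\Rightarrow e)=2$ (the downward path through $s_1$), $d(s_1s_0\Rightarrow s_0s_1)=2$ (down to $s_1$, then the diagonal Bruhat edge up to $s_0s_1$), while $d(s_1s_0\Rightarrow s_0)=3$; so the minimum $2$ is attained at both $e$ and $s_0s_1$. This is exactly the ``$e$ sits two below the extrapolated formula'' phenomenon in your own analysis. Note that the same configuration also defeats the paper's proof of the three-element case, which asserts that $\ell(u\Rightarrow v)=\ell(u\Rightarrow vs_is_j)$ only when $u=vs_i$ or $u\rightarrow vs_i$ is a diagonal edge, and that the middle element then minimises; here the outer distances coincide while the middle element is strictly farther. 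So the statement as written appears to fail for level-one elements whose length positive set is a triple containing $e$, and any correct proof (yours or the paper's) must either exclude this case from the hypothesis or handle it by a different mechanism.
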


\begin{proof}
    As before, if $|\LP(x)| = 1$ then the statement is trivial, so assume that $v, vs_i \in \LP(x)$.
    
    By the proof of Thm. \ref{thm:same-weight}, $\ell(u \Rightarrow v) = |\ell(u) - \ell(v)|$ if either $u \leq v$ or $u \geq v$ and $u,v$ are same-sided, or $\ell(u \Rightarrow v) = |\ell(u) - \ell(v)| + 2$ otherwise. It is then clear that $\ell(u \Rightarrow v) \neq \ell(u \Rightarrow vs_i)$, as these values must have different parities, since $\ell(vs_i) = \ell(v) \pm 1$, and so one of $v, vs_i$ will uniquely minimise the distance among that pair, proving the theorem in the case $|\LP(x)| = 2$.
    
    Finally assume $\LP(x) = \{v, vs_i, vs_is_j\}$ for $i\neq j$. Note that by Cor. \ref{cor:no-degen-3lp}, $\LP(x)$ is entirely one-sided. It is clear from Fig. \ref{fig:full-QBG} that $\ell(u \Rightarrow v) = \ell(u \Rightarrow vs_is_j)$ if and only if either $u = vs_i$, in which case $\ell(u \Rightarrow vs_i) = 0$ and $vs_i$ uniquely minimises the distance, or if $u \rightarrow vs_i$ is a diagonal edge, in which case $\ell(u \Rightarrow vs_i) = 1$ and $vs_i$ again uniquely minimises the distance. Hence, either $\ell(u \Rightarrow v), \ell(u \Rightarrow vs_i), \ell(u \Rightarrow vs_is_j)$ are pairwise distinct, and so we have a unique distance minimiser, or $\ell(u \Rightarrow v) = \ell(u \Rightarrow vs_is_j)$, in which case $vs_i$ is the unique distance minimiser, proving this part of the theorem in the last case.

    An analogous argument proves the theorem for $\ell(v \Rightarrow u)$.
\end{proof}

\subsection{Independence of choice in \texorpdfstring{$M_{x,y}$}{}}

We now state the main theorem of this section.

\begin{thm}\label{thm:well-defined}
    For type $\widehat{SL}_2$, the generalised Demazure product $x *_{u,v}^p y = x*y$ is independent of the choice of $(u,v) \in M_{x,y}$ and $p:u \Rightarrow v$.
\end{thm}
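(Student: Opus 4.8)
The plan is to first remove the dependence on the path $p$, then reduce the statement to two invariance claims which I verify against the explicit graph in Fig.~\ref{fig:full-QBG}. Path-independence is immediate from Thm.~\ref{thm:same-weight}: all shortest paths $u \rightarrow w_y v$ share a common weight $\wt(u \Rightarrow w_y v)$, so $*_{u,v}^p = *_{u,v}$. Writing the product as $x *_{u,v} y = w_x u v^{-1}\varepsilon^{v u^{-1}\mu_x + \mu_y - v\,\wt(u \Rightarrow w_y v)}$ and recalling that $w_x, \mu_x, \mu_y$ are fixed while $v u^{-1} = (u v^{-1})^{-1}$, the output depends on $(u,v) \in M_{x,y}$ only through $u v^{-1}$ and $v\,\wt(u \Rightarrow w_y v)$. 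It therefore suffices to prove that both of these are constant on $M_{x,y}$.

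To organise the possible choices, observe that by Thm.~\ref{thm:vertical-edge} the minimiser $u = \phi(v) \in \LP(x)$ of $d(u \Rightarrow w_y v)$ is unique for each fixed $v$, so $M_{x,y} = \{(\phi(v),v) : v \in V^*\}$ with $V^* \subseteq \LP(y)$ the set of globally distance-minimising $v$. The argument proving Thm.~\ref{thm:vertical-edge} in fact shows that minimising $d(u \Rightarrow w)$ over $w$ in any one-sided connected chain has a unique solution; since $w_y\LP(y)$ is such a chain — its only non-one-sided shape being $\{s_0,e,s_1\}$, which is checked directly to still admit a unique nearest vertex — it follows that $|M_{x,y}| = 1$ whenever $|\LP(x)| = 1$ or $|\LP(y)| = 1$. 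By Cors.~\ref{cor:LPx-size} and~\ref{cor:LPx-connected} I may thus assume $\LP(x)$ and $\LP(y)$ are one-sided connected chains with $2 \le |\LP(x)|, |\LP(y)| \le 3$.

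The substantive case is $|V^*| \ge 2$, where a parity count does the main bookkeeping: every edge changes length by $\pm1$, so $d(u \Rightarrow w) \equiv \ell(u) + \ell(w) \pmod 2$, and for a fixed source the distances to two length-adjacent targets have opposite parity. Consequently, if $v$ and $v' = v s_i$ are adjacent elements of $\LP(y)$ both lying in $V^*$, their minimisers satisfy $\phi(v) \neq \phi(v')$; as these are distinct vertices of the chain $\LP(x)$, in the principal case $|\LP(x)| = 2$ they are joined by its reflection $s_j$, so $\phi(v') = \phi(v)s_j$ — the tie is \emph{anti-diagonal}. Invariance of $u v^{-1}$ now reduces to the single equality $i = j$, since $u v^{-1} = u' v'^{-1} \iff (u')^{-1}u = (v')^{-1}v \iff s_j = s_i$. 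I would prove $i = j$ by splitting on the relative sides of $\LP(x)$ and $w_y\LP(y)$: when they are same-sided the distance formula from the proof of Thm.~\ref{thm:vertical-edge} reduces to $|\ell(u) - \ell(v)|$, and a direct check forces the only tie to have $u = w_y v$ (distance $0$, and $s_i = s_j$ automatically); when they are opposite-sided I read $\phi$ and the joining reflections off Fig.~\ref{fig:full-QBG}, using that quantum edges are purely vertical, and confirm $i = j$ configuration by configuration.

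With $i = j$ in hand, constancy of $v\,\wt(u \Rightarrow w_y v)$ is the path-weight identity $\wt(u \Rightarrow w_y v) = s_i\,\wt(u s_i \Rightarrow w_y v\, s_i)$, describing how right multiplication by $s_i$ transforms the weight of a shortest path. In the size-two cases this is automatic: the analysis above shows the two realising paths of an anti-diagonal tie are either of length zero (same-sided) or ascending Bruhat paths (opposite-sided, since both legs satisfy $\ell(u) < \ell(w_y v)$), so both weights vanish and the identity reads $0 = 0$. Genuine quantum weights can enter only in the level-one triple configurations, where $|\LP(x)|$ or $|\LP(y)|$ equals $3$ and $V^*$ is either two chain-endpoints or all three vertices; these I would settle by chaining the adjacent comparisons, reading the vertical quantum weights $\tilde{\alpha}_0^{\vee},\tilde{\alpha}_1^{\vee}$ off Fig.~\ref{fig:full-QBG}, and using Cor.~\ref{cor:no-degen-3lp}, which forces each length-positive chain to be one-sided. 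The main obstacle I anticipate is precisely this opposite-sided and triple bookkeeping — verifying that the joining reflections always coincide, so that $u v^{-1}$ is genuinely constant, and that the vertical quantum weights reassemble after the $s_i$-shift — since the breakdown of the weight-preserving edge switches in the affine graph (noted after Prop.~\ref{postlem1}) rules out a purely formal proof and makes the explicit, graph-theoretic case check enabled by Thms.~\ref{thm:same-weight} and~\ref{thm:vertical-edge} necessary.
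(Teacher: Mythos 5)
Your overall route coincides with the paper's: after disposing of the path $p$ via Thm.~\ref{thm:same-weight}, you reduce to showing that $uv^{-1}$ and $v\,\wt(u\Rightarrow w_yv)$ are constant on $M_{x,y}$, and then you run a case analysis on the sizes and relative sidedness of $\LP(x)$ and $w_y\LP(y)$ in the explicit graph, finding that any genuine tie is the ``anti-diagonal'' configuration of Fig.~\ref{fig:qbg-portion} (two pairs $(u,v)$ and $(us_i,vs_i)$ joined by the same reflection, both realised by weight-zero ascending paths), with the three-element level-one sets handled by chaining adjacent comparisons. Your parity observation --- that $d(u\Rightarrow w)\equiv\ell(u)+\ell(w)\pmod 2$ forces the minimisers attached to two length-adjacent targets to differ --- is a clean way to derive what the paper reads directly off the figure, and your reduction of the $uv^{-1}$-invariance to the single equality $s_i=s_j$ is correct. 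For elements of positive level this is essentially the paper's proof.

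There is, however, one genuine gap: you never treat the boundary of the Tits cone. The theorem is asserted for all $x,y\in\WTits$, but your reduction ``I may thus assume $2\le|\LP(x)|,|\LP(y)|\le 3$'' rests on Cors.~\ref{cor:LPx-size} and~\ref{cor:LPx-connected}, and your uniqueness-of-minimiser input rests on Thm.~\ref{thm:vertical-edge}; all three hypotheses require $\lev>0$. When $\lev(x)=0$ or $\lev(y)=0$, Prop.~\ref{prop:boundary-lp} and Rmk.~\ref{rmk:boundary-lp-details} show the length-positive set is an entire (infinite) half of $W$, so the finiteness and the cited uniqueness statement are simply unavailable, and the same-sided case resolves differently (via the containment $w_y\LP(y)\subset\LP(x)$ forcing a distance-zero pair, rather than via coincidence of two-element sets). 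The repair is not difficult --- an infinite one-sided chain still admits a unique nearest vertex to any opposite-sided target, and the resulting ties are again of the Fig.~\ref{fig:qbg-portion} shape --- but it must be said; the paper spends the last two paragraphs of its proof on precisely these cases. A second, smaller point: the opposite-sided verification that the joining reflections coincide ($i=j$) and the triple-configuration bookkeeping are left as announced case checks rather than carried out, though the plan you describe for them is the correct one.
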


Before proving this, we first show its equivalence to a more explicit condition.

\begin{prop}
    Let $x,y \in \WTits, (u,v) \in M_{x,y}$, and let $w_y \in W$ be the Weyl component of $y$. Then if $uv^{-1}$ and $v\wt(u \Rightarrow w_y v)$ are independent of the choice of $u, v$, theorem \textup{\ref{thm:well-defined}} holds.
\end{prop}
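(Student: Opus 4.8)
The plan is to reduce Theorem \ref{thm:well-defined} to a direct bookkeeping exercise, since the two stated hypotheses are engineered to capture exactly the ways in which the product can depend on the choices made. I would begin by recalling the defining formula
\begin{equation*}
    x *_{u,v}^p y = w_x u v^{-1} \varepsilon^{vu^{-1} \mu_x + \mu_y - v\wt(p)},
\end{equation*}
where $x = w_x \varepsilon^{\mu_x}$ and $y = w_y \varepsilon^{\mu_y}$, and where $\mu_x, \mu_y$ and the Weyl component $w_x$ are fixed data, unaffected by the choice of $(u,v) \in M_{x,y}$ or of the shortest path $p : u \Rightarrow w_y v$.

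First I would eliminate the dependence on $p$. By Theorem \ref{thm:same-weight} and its corollary, any two shortest paths $u \Rightarrow w_y v$ in $\QBG(W)$ have equal weight, so $\wt(p) = \wt(u \Rightarrow w_y v)$ is determined by the endpoints alone. Hence the term $v\wt(p)$, and therefore the whole product, is independent of the chosen path once $(u,v)$ is fixed, and we may write $\wt(u \Rightarrow w_y v)$ unambiguously.

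It then remains to show independence of $(u,v) \in M_{x,y}$, and here I would split the formula into its two components. The Weyl component is $w_x u v^{-1}$; since $w_x$ is fixed and $uv^{-1}$ is independent of the choice by hypothesis, so is this product. The coweight exponent is $vu^{-1}\mu_x + \mu_y - v\wt(u \Rightarrow w_y v)$: the middle term $\mu_y$ is fixed, the final term is independent by hypothesis, and for the first term one observes that $vu^{-1} = (uv^{-1})^{-1}$, so that the independence of $uv^{-1}$ forces the independence of $vu^{-1}$, and hence of $vu^{-1}\mu_x$. Combining these, both the Weyl component and the coweight exponent are independent of $(u,v)$, so $x *_{u,v}^p y$ is well-defined, establishing Theorem \ref{thm:well-defined}.

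I do not expect any genuine obstacle inside this proposition itself: its role is precisely to isolate the two quantities $uv^{-1}$ and $v\wt(u \Rightarrow w_y v)$ as the only sources of ambiguity, thereby deferring the real work to proving that these are in fact independent of the choice in $M_{x,y}$ (the double affine analogue of Schremmer's Theorem \ref{schremform}(ii)). That independence, rather than the present reduction, is where the structural results on $\LP(x)$ and on shortest paths in $\QBG(W)$ of type $\widehat{SL}_2$ will be needed.
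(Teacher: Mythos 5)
Your proposal is correct and follows essentially the same route as the paper: the paper likewise invokes Theorem \ref{thm:same-weight} to dispose of the path dependence and then rewrites the product as $w_x \varphi_1 \varepsilon^{\varphi_1^{-1}\mu_x + \mu_y - \varphi_2}$ with $\varphi_1 = uv^{-1}$ and $\varphi_2 = v\wt(u \Rightarrow w_y v)$, exactly your observation that $vu^{-1} = (uv^{-1})^{-1}$. Nothing is missing.
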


\begin{proof}
    Note that by Thm. \ref{thm:same-weight}, $*_{u,v}^p$ is certainly independent of $p$. Write $uv^{-1} = \varphi_1, v\wt(p) = v\wt(u \Rightarrow w_y v) = \varphi_2$. Then the generalised Demazure product becomes
    \begin{align*}
        x *_{u,v}^p y 
        = x *_{u,v} y
        &= w_x u v^{-1} \varepsilon^{vu^{-1} \mu_x + \mu_y - v\wt(u \Rightarrow w_y v)} \\
        &= w_x \varphi_1 \varepsilon^{\varphi_1^{-1} \mu_x + \mu_y - \varphi_2}.
    \end{align*}
    Hence if $\varphi_1, \varphi_2$ are independent of the choice of $u,v$,  the product is fully determined.
\end{proof}

\begin{proof}[Proof of Theorem \textup{\ref{thm:well-defined}}] 
We first prove this holds for $x, y \in \WTits$ with $\lev(x), \lev(y) \neq 0$.

By Cor \ref{cor:LPx-size}, we have that $1 \leq |\LP(x)|, |\LP(y)| \leq 3$, and by Cor \ref{cor:LPx-connected}, $\LP(x), \LP(y)$ are each connected by vertical edges in $\QBG(W)$. Note also that $w\LP(y)$ will also be connected by vertical edges, as multiplication on the left preserves this condition.

If either $|\LP(x)| = 1$ or $|\LP(y)| = 1$, then by Thm. \ref{thm:vertical-edge}, there is a unique distance minimising pair $M_{x,y} = \{(u, v)\}$, and so there are no choices to be made, in which case the theorem is true immediately.

\vbls

Next we consider $|\LP(x)| = |\LP(y)| > 1$, with either $|\LP(x)| = 2$ or $|\LP(y)| = 2$. By Thm. \ref{thm:vertical-edge}, we must have that $|M_{x,y}| \leq 2$. Assume that $\LP(x) \cap w_y\LP(y) = \emptyset$, otherwise a distance minimising pair $(u,v)$ must have $u = w_y v$, in which case $uv^{-1} = w_y$ and $v\wt(u \Rightarrow w_y v) = 0$ are both forced.

If $\LP(x)$ and $w_y\LP(y)$ are same-sided in $\QBG(W)$, then it is clear from that graph that there is a unique distance minimising pair. If instead $\LP(x)$ and $w_y\LP(y)$ are opposite-sided, then there is again a unique distance minimising pair unless 2 elements of $\LP(x)$ are each connected to an element of $w_y\LP(y)$ by a diagonal edge, as pictured in Fig. \ref{fig:qbg-portion}. In this case, $|M_{x,y}| = \{(u,v), (us_i, vs_i)\}$, where $\{u, us_i\} \subseteq \LP(x)$ and $\{v, vs_i\} \subseteq \LP(y)$, with $u < us_i$ and $v < vs_i$.

\vbls

\begin{figure}
\begin{center}
\begin{tikzcd}[row sep=1.5cm, column sep=1.05cm]

\
&& \ 
\arrow[d, magenta, shift left = 0.1cm, dashed, shorten <= 0.4cm] \\
\
\arrow[d, magenta, shift right = 0.1cm, dashed, dash, shorten <= 0.4cm]
& & w_y v s_i
\arrow[ull, dashed, dash, shorten >= 1.8cm] 
\arrow[u, dashed, dash, shorten >= 0.4cm, shift left = 0.1cm]
\arrow[d, magenta, shift left = 0.1cm, "\tilde{\alpha}_i"] \\
u s_i
\arrow[u, dashed, dash, shift right = 0.1cm, shorten >= 0.4cm] 
\arrow[urr]
\arrow[d, magenta, shift right = 0.1cm, "\tilde{\alpha}_i"']
& & w_y v
\arrow[ull, dashed, dash, shorten >= 1.8cm] 
\arrow[u, shift left = 0.1cm]
\arrow[d, magenta, shift left = 0.1cm, dashed, dash, shorten >= 0.4cm] \\
u
\arrow[u, shift right = 0.1cm] 
\arrow[urr]
\arrow[d, magenta, shift right = 0.1cm, dashed, dash, shorten >= 0.4cm]
& & \
\arrow[ull, dashed, dash, shorten <= 1.8cm] 
\arrow[u, shift left = 0.1cm, dashed, dash, shorten <= 0.4cm] \\
\
\arrow[u, shift right = 0.1cm, dashed, dash, shorten <= 0.4cm]
& & \\

\end{tikzcd}
\end{center}
\vspace{-4\baselineskip}
\caption{A portion of $\QBG(W)$ when $\ell(w_y v) = \ell(us_i)$ and $w_y v, us_i$ are opposite-sided.}\label{fig:qbg-portion}
\end{figure}
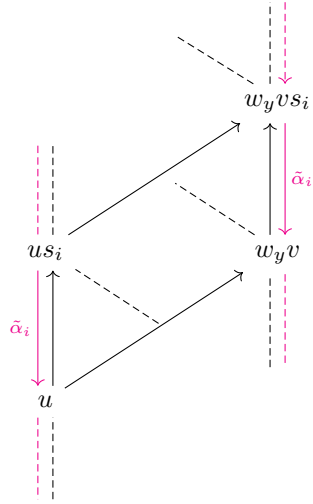

\newpage

For each pair $(u_1, v_1) \in M_{x,y}$, we calculate $\varphi_1 = u_1v_1^{-1}$ and $\varphi_2 = v_1\wt(u_1 \Rightarrow w_yv_1)$:

\vbls[-1]

\begin{alignat*}{4}
    &\varphi_1:\quad
    \begin{aligned}[t]
        &(us_i)(w_yvs_i)^{-1}
        &&=\ us_i s_i v^{-1} w_y^{-1}
        =\ u v^{-1} w_y^{-1}.\\
        &(u)(w_yv)^{-1}
        &&=\ uv^{-1} w_y^{-1}. \\
    \end{aligned} \\[0.5\baselineskip]
    &\varphi_2:\quad
    \begin{aligned}[t]
        &vs_i\wt(us_i \Rightarrow w_yvs_i)
        &&=\ vs_i(0)
        &=\ 0.\\
        &v\wt(u \Rightarrow w_yv)
        &&=\ v(0)
        &=\ 0.   
    \end{aligned}
\end{alignat*}

Hence $\varphi_1$ and $\varphi_2$ are independent of the choice of pair $(u_1, v_1) \in M_{x,y}$ as required.

\vbls

The final non-boundary case to consider is when $|\LP(x)| = |\LP(y)| = 3$. If $|M_{x,y}| = 1$, then we are done. If $|M_{x,y}| = 2$, then we must have a situation as in Fig. \ref{fig:qbg-portion}, in which case the previous calculations apply and we are again done. The final option is $|M_{x,y}| = 3$, in which case $M_{x,y} = \{(u, v), (us_i, vs_i), (us_is_j, vs_is_j)\}$, for $i \neq j$, where $u,v$ are opposite-sided and $\ell(v) = \ell(u) + 1$.
We have proven above that $\varphi_1, \varphi_2$ are constant for pairs of the form $(u,v), (us_i, vs_i)$ as previously described, and so must also be constant for $(us_i, vs_i), (us_is_j, vs_is_j)$. Hence, these values are constant for every pair in $M_{x,y}$, as required.

\vbls

Now we must consider the case when at least one of $x, y$ are on the boundary of the Tits cone. If $\lev(x) = 0$ but $\lev(y) > 0$, recall that $\LP(x)$ is either a complete left or right side of $\QBG(W)$ by Prop. \ref{prop:boundary-lp}, and $w_y\LP(y)$ is connected vertically. Hence, these sets are either same-sided or opposite-sided. In the first case, we must have $w_y\LP(y) \subset \LP(x)$, and so $\LP(x) \cap w_y \LP(y) \neq \emptyset$, forcing the choices of $\varphi_1$ and $\varphi_2$. If instead they are opposite-sided, then it is easy to see that a distance minimising path must have length 1, and we recover the same situation as in Fig. \ref{fig:qbg-portion}, with either 2 or 3 distance minimising pairs. An analogous argument applies if $\lev(x) > 0 $ and $\lev(y) = 0$.

Finally, if $\lev(x) = \lev(y) = 0$, then $w_y\LP(y)$ is single-sided, consisting of all elements above $w_y$. If $\LP(x)$ is same-sided, then we have an overlap and we are done, and if $\LP(x)$ is opposite-sided, we see that all distance minimising pairs are again of the same form as in Fig. \ref{fig:qbg-portion}, in which case $\varphi_1$ and $\varphi_2$ are constant.
\end{proof}

\subsection{Determining \texorpdfstring{$\LP(x*y)$}{}}

In order to show associativity further on, we will need to prove an extension of Theorem 5.12(b) in \cite{SchremABODP} to the double-affine case. We write $M_{x,y}^y = \{v \in \LP(y)\ |\ (u,v) \in M_{x,y}$ for some $u \in \LP(x)\}$, the $\LP(y)$ component of the set of distance-minimising pairs. Firstly, we state the following proposition, an extension of a lemma by Schremmer (\cite{SchremGNP}, Lemma 2.14).

\begin{prop}\label{prop:lp-iff-zero}
    Let $\tilde{\beta}$ be a simple affine root, and let $x \in \WTits$ with $u \in \LP(x)$. Then $us_{\tilde{\beta}} \in \LP(x) \iff \ell(x, u\tilde{\beta}) = 0$.
\end{prop}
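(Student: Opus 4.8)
The plan is to establish both implications of the biconditional by directly manipulating the length functional $\ell(x, \cdot)$, using the fact that $s_{\tilde\beta}$ reflects in the simple affine root $\tilde\beta$. Fix $x = w\varepsilon^\mu$ and recall $\ell(x, \alpha) = \langle \mu, \alpha\rangle + \Phi^+(\alpha) - \Phi^+(w\alpha)$. Since $u \in \LP(x)$ already guarantees $\ell(x, u\tilde\alpha) \geq 0$ for every $\tilde\alpha \in \Phi^+$, the content is to compare the length positivity conditions for $u$ against those for $us_{\tilde\beta}$, and to see that they differ in exactly one root direction — namely the direction $u\tilde\beta$.

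First I would record how $s_{\tilde\beta}$ permutes the positive roots. Because $\tilde\beta$ is a simple affine root, the reflection $s_{\tilde\beta}$ sends $\tilde\beta \mapsto -\tilde\beta$ and permutes $\Phi^+ \setminus \{\tilde\beta\}$ among itself (a standard Coxeter fact). Consequently, as $\tilde\alpha$ ranges over $\Phi^+$, the roots $us_{\tilde\beta}\tilde\alpha$ and $u\tilde\alpha$ coincide as \emph{sets} except that one value $u\tilde\beta$ is replaced by its negative $-u\tilde\beta$. Therefore the only length-functional condition that can change between $u$ and $us_{\tilde\beta}$ is the one associated to the root $\tilde\beta$ itself. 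So I would compute $\ell(x, (us_{\tilde\beta})\tilde\alpha)$ for $\tilde\alpha = s_{\tilde\beta}(\text{some }\tilde\gamma)$ and reduce everything to comparing $\ell(x, u\tilde\beta)$ with $\ell(x, -u\tilde\beta) = \ell(x, u(-\tilde\beta))$.

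The key computation is a reflection identity for the length functional: I expect $\ell(x, u\tilde\beta) + \ell(x, -u\tilde\beta) = 0$. Indeed, $\langle \mu, u\tilde\beta\rangle + \langle \mu, -u\tilde\beta\rangle = 0$, and the indicator terms satisfy $\Phi^+(u\tilde\beta) + \Phi^+(-u\tilde\beta) = 1$ and likewise $\Phi^+(wu\tilde\beta) + \Phi^+(-wu\tilde\beta) = 1$ (a root and its negative contribute complementary indicator values), so the indicator contributions cancel in the sum. Given $u \in \LP(x)$, we have $\ell(x, u\tilde\beta) \geq 0$, hence $\ell(x, -u\tilde\beta) = -\ell(x, u\tilde\beta) \leq 0$. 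Now $us_{\tilde\beta} \in \LP(x)$ requires, among the unchanged conditions (which hold since $u \in \LP(x)$), the single new condition corresponding to the flipped root, which amounts to $\ell(x, -u\tilde\beta) \geq 0$. Combined with $\ell(x, -u\tilde\beta) \leq 0$, this forces $\ell(x, -u\tilde\beta) = 0$, equivalently $\ell(x, u\tilde\beta) = 0$. Conversely, if $\ell(x, u\tilde\beta) = 0$ then $\ell(x, -u\tilde\beta) = 0 \geq 0$, and all other conditions are inherited from $u \in \LP(x)$, giving $us_{\tilde\beta} \in \LP(x)$.

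The main obstacle, and the step to verify carefully, is the bookkeeping of which single root direction flips sign under $us_{\tilde\beta}$ versus $u$, and confirming that no \emph{other} length condition is genuinely altered — in particular checking that the reindexing $\tilde\alpha \mapsto s_{\tilde\beta}\tilde\alpha$ on $\Phi^+ \setminus \{\tilde\beta\}$ leaves each corresponding value $\ell(x, \cdot)$ unchanged because $u s_{\tilde\beta}(s_{\tilde\beta}\tilde\alpha) = u\tilde\alpha$. Once this set-theoretic permutation argument is pinned down, the biconditional follows cleanly from the antisymmetry identity $\ell(x, u\tilde\beta) = -\ell(x, -u\tilde\beta)$ together with the nonnegativity hypothesis from $u \in \LP(x)$.
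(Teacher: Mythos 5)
Your proposal is correct and follows essentially the same route as the paper's proof: both directions rest on the antisymmetry $\ell(x,-u\tilde{\beta})=-\ell(x,u\tilde{\beta})$ together with the fact that $s_{\tilde{\beta}}$ permutes $\Phi^+\cup\{-\tilde{\beta}\}$, so that passing from $u$ to $us_{\tilde{\beta}}$ changes only the single condition attached to $\tilde{\beta}$. You simply spell out the verification of the antisymmetry identity and the set-theoretic bookkeeping that the paper leaves implicit.
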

\begin{proof}
    Mimicking Schremmer's proof for the affine case, observe that the condition $u, us_{\tilde{\beta}} \in \LP(x)$ implies  $\ell(x, u\tilde{\beta}), \ell(x, -u\tilde{\beta}) \geq 0 \implies \ell(x, u\tilde{\beta}) = 0$. Furthermore, if $\ell(x, u\tilde{\beta}) = 0$, then $\ell(x, u\tilde{\alpha}) \geq 0$ for any $\tilde{\alpha} \in \Phi^+ \cup \{-\tilde{\beta}\} = s_{\tilde{\beta}}(\Phi^+ \cup \{-\tilde{\beta}\})$, and so $\ell(x, us_{\tilde{\beta}}\tilde{\alpha}) \geq 0$ for all $\tilde{\alpha} \in \Phi^+$ as required.
\end{proof}

\begin{thm}\label{thm:LP-of-demprod}
    Let $x, y \in \WTits$ such that $\lev(x), \lev(y) \geq 1$. Then $\LP(x*y) = M_{x,y}^y$.
\end{thm}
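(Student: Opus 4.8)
The goal is to show the two sets $\LP(x*y)$ and $M_{x,y}^y$ coincide, where $x*y = w_x u v^{-1}\varepsilon^{vu^{-1}\mu_x + \mu_y - v\wt(u\Rightarrow w_y v)}$ for any fixed choice of $(u,v)\in M_{x,y}$ (well-defined by Thm.~\ref{thm:well-defined}). My plan is to establish the equality by proving both inclusions, using Prop.~\ref{prop:lp-iff-zero} as the main bridge between the combinatorics of length-positive sets and the geometry of shortest paths in $\QBG(W)$. The key structural facts I would lean on are that, in type $\widehat{SL}_2$, both $\LP(y)$ and $\LP(x*y)$ are connected by vertical edges (Cor.~\ref{cor:LPx-connected}) and single-sided (Prop.~following Cor.~\ref{cor:no-degen-3lp}), and that $M_{x,y}^y$ inherits connectivity from the picture in Fig.~\ref{fig:qbg-portion}.

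First I would fix a reference pair $(u,v) \in M_{x,y}$ and reduce to understanding when an adjacent element $vs_{\tilde\beta} \in \LP(y)$ (for $\tilde\beta$ a simple affine root) also lies in $M_{x,y}^y$. By Prop.~\ref{prop:lp-iff-zero}, membership $vs_{\tilde\beta}\in\LP(y)$ is equivalent to $\ell(y, v\tilde\beta) = 0$; the analogous statement holds for $\LP(x*y)$ with respect to the element $x*y$. So the core computation is to compare the length functional $\ell(x*y, (uv^{-1})\cdot\gamma)$ against the data defining $M_{x,y}^y$. Concretely, I would compute $\ell(x*y, \cdot)$ directly from the explicit form of $x*y$ in Def.~\ref{def:gen-dem-prod}, and show that the condition for $v$ to be distance-minimising (i.e. $(u,v)\in M_{x,y}$) translates exactly into the length-positivity inequalities $\ell(x*y, w\tilde\alpha)\geq 0$ for the relevant Weyl component $w = w_xuv^{-1}$ of $x*y$.

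For the inclusion $M_{x,y}^y \subseteq \LP(x*y)$, I would take $v \in M_{x,y}^y$ and verify that the corresponding element of $W$ satisfies the length-positivity inequalities for $x*y$, using that a shortest path $u\Rightarrow w_y v$ forces the intermediate length functionals to have controlled sign — exactly the place where the $\QBG(W)$ structure (shortest paths consist of vertical downward edges and at most one diagonal upward edge, by the proof of Thm.~\ref{thm:same-weight}) pins down the weight and hence the coweight $vu^{-1}\mu_x + \mu_y - v\wt(u\Rightarrow w_y v)$. For the reverse inclusion, I would argue contrapositively: if $v\in\LP(y)$ but $v\notin M_{x,y}^y$, then $v$ fails to minimise $d(u'\Rightarrow w_y v')$, and I would show this non-minimality produces a negative value of $\ell(x*y, \cdot)$, forcing the associated element out of $\LP(x*y)$. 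Since both sets have size at most $3$ and are connected single-sided subsets of $\QBG(W)$, it suffices to check agreement on the minimiser and its vertical neighbours, which I would do by running through the (few) configurations catalogued in the proof of Thm.~\ref{thm:well-defined}.

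The main obstacle I anticipate is the bookkeeping in translating the distance-minimising condition on $(u,v)$ — a statement about path lengths in $\QBG(W)$ — into the sign conditions on the length functional of the \emph{product} element $x*y$, since the Weyl component $w_xuv^{-1}$ and the twisted coweight mix the data of $x$, $y$, and the path weight in a way that is not manifestly compatible with $\LP$. I expect the cleanest route is to exploit Prop.~\ref{prop:lp-iff-zero} to reduce everything to the vanishing condition $\ell(x*y, \gamma) = 0$ on single simple affine roots $\gamma$, and then to match these vanishing conditions against the diagonal-edge configurations of Fig.~\ref{fig:qbg-portion} one by one, rather than attempting a uniform formula. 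The boundary and $|\LP| = 3$ cases will require separate verification, paralleling the case analysis already used in Thm.~\ref{thm:well-defined}.
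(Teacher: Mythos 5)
Your plan matches the paper's proof in all essentials: both expand the length functional $\ell(x*y,\cdot)$ from the explicit form of the product, use Prop.~\ref{prop:lp-iff-zero} to pass between vanishing of $\ell(x,u\tilde\beta)$, $\ell(y,v\tilde\beta)$ and membership of neighbours in the respective $\LP$ sets, and run a case analysis on the $\QBG(W)$ configurations (weights pairing to $0$ or $\pm 2$, the diagonal-edge picture of Fig.~\ref{fig:qbg-portion}) to show that any length-positive $r\neq v$ must arise from another distance-minimising pair and conversely. The only cosmetic difference is that you phrase it as two inclusions and invoke connectivity of $\LP(x*y)$ to restrict attention to neighbours of $v$, whereas the paper handles a general $r=vr_0$ and derives $|\Inv(r_0^{-1})|=1$ along the way; your mention of a boundary case is vacuous under the hypothesis $\lev(x),\lev(y)\geq 1$.
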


\begin{proof}
    Let $(u,v) \in M_{x,y}$, and write $x = w_x \varepsilon^{\mu_x}, y = w_y \varepsilon^{\mu_y}$ as before. Let $r \in W$. Then:
    \begin{align*}
        \ell(x*y, r\tilde{\alpha})\
        &=\ \langle vu^{-1}\mu_x + \mu_y - v\wt(u \Rightarrow w_y v), r\tilde{\alpha} \rangle + \Phi^+(r\tilde{\alpha}) - \Phi^+(w_xuv^{-1}r\tilde{\alpha})\\
        &=\ \left( \langle \mu_y, r\tilde{\alpha} \rangle + \Phi^+(r \tilde{\alpha}) - \Phi^+(w_y r \tilde{\alpha}) \right) + \left(\langle \mu_x, uv^{-1}r\tilde{\alpha} \rangle + \Phi^+(uv^{-1}r\tilde{\alpha}) - \Phi^+(w_xuv^{-1}r\tilde{\alpha})\right) \\
        &\hspace{50mm} - \langle \wt(u \Rightarrow w_yv), v^{-1}r\tilde{\alpha} \rangle + \Phi^+(w_yr\tilde{\alpha}) - \Phi^+(uv^{-1}r\tilde{\alpha})\\
        &=\ \ell(y, r\tilde{\alpha}) + \ell(x, uv^{-1}r\tilde{\alpha}) - \langle \wt(u \Rightarrow w_yv), v^{-1}r\tilde{\alpha} \rangle + \Phi^+(w_yr\tilde{\alpha}) - \Phi^+(uv^{-1}r\tilde{\alpha}).
    \end{align*}

    Assume $r \neq v$. Our goal is to show that $r$ cannot be length positive for $x*y$ unless it is part of another distance-minimising pair. Write $r = vr_0$ for some $v_0 \in W \backslash \{e\}$, so
    \begin{equation}
        \ell(x*y, r\tilde{\alpha})\
        =\ \ell(y, vr_0\tilde{\alpha}) + \ell(x, ur_0\tilde{\alpha}) - \langle \wt(u \Rightarrow w_yv), r_0\tilde{\alpha} \rangle + \Phi^+(w_yvr_0\tilde{\alpha}) - \Phi^+(ur_0\tilde{\alpha}).
    \end{equation}

    \vbls[0.5]

    Let $\tilde{\alpha} \in \Inv(r_0)$ and write $\tilde{\beta} = -r_0\tilde{\alpha} \in \Inv(r_0^{-1})$. Using the properties that $\ell(z, -\tilde{\alpha}) = -\ell(z, \tilde{\alpha})$ and $\Phi^+(-\tilde{\alpha}) = 1 - \Phi^+(\tilde{\alpha})$ (cf. \cite{SchremGNP}), we find:
    \begin{equation}
        \ell(x*y, r\tilde{\alpha})\
        =\ -\ell(y, v\tilde{\beta}) - \ell(x, u\tilde{\beta}) + \langle \wt(u \Rightarrow w_yv), \tilde{\beta} \rangle - \Phi^+(w_yv\tilde{\beta}) + \Phi^+(u\tilde{\beta}).
    \end{equation}

    It is clear from the graph of $\QBG(W)$ that the weights of edges that contribute to $\wt(u \Rightarrow w_yv)$ alternate between $\tilde{\alpha}_0 = -\alpha + \delta$ and $\tilde{\alpha}_1 = \alpha$. Hence, writing $\wt(u \Rightarrow w_yv) = c_1\alpha + c_2\delta$, we must have $c_1 \in \{0, \pm 1\}$ and $c_2 \in \mathbb{Z}_{\geq 0}$. Therefore we must have that $\langle \wt(u \Rightarrow w_yv), \tilde{\beta} \rangle \in \{-2, 0, 2\}$. 
    
    Note that $\ell(y, v\tilde{\beta}), \ell(x, u\tilde{\beta}) \geq 0$ as $u \in \LP(x), v \in \LP(y)$. Hence, if $\langle \wt(u \Rightarrow w_yv), \tilde{\beta} \rangle = -2$ for some $\tilde{\beta} \in \Inv(r_0^{-1})$, then $\ell(x*y, r\tilde{\alpha})\ \leq\ 0 + 0 -2 - 0 + 1\ =\ -1$, and so $r \notin \LP(x*y)$.
    
    \vbls
    
    Assume that $r \in \LP(x*y)$. We first show that if $\langle \wt(u \Rightarrow w_yv), \tilde{\beta} \rangle = 2$ for some $\tilde{\beta} \in \Inv(r_0^{-1})$, then $|\Inv(r_0^{-1})| = 1$. 
    
    If $|\Inv(r_0^{-1})| > 1$, write $r_0^{-1} = ...s_j s_i$, for some $i,j \in \{0,1\}, i \neq j$. Then by the classification of inversion sets (\cite{Humphreys_1990}), we must have that $\{\tilde{\alpha}_i, s_i(\tilde{\alpha}_j)\} \subset \Inv(r_0^{-1})$. It is easy to check that the classical parts of these two roots will differ by a sign. Then if $\langle \wt(u \Rightarrow w_yv), \tilde{\beta} \rangle = 2$ for some $\tilde{\beta} \in \Inv(r_0^{-1})$, it must also be non-zero for any $\tilde{\beta} \in \Inv(r_0^{-1})$. Hence, the product of the weight with $\tilde{\alpha}_i$ and $s_i(\tilde{\alpha}_j)$ must both be non-zero, and also differ by a sign. We can therefore choose $\tilde{\beta} \in \{\tilde{\alpha}_i, s_i(\tilde{\alpha}_j)\}$ such that $\langle \wt(u \Rightarrow w_yv), \tilde{\beta} \rangle = -2$, implying that $r \notin \LP(x*y)$ by the above, giving a contradiction.

    Therefore, we must have that either $\langle \wt(u \Rightarrow w_yv), \tilde{\beta} \rangle = 2$ with $r_0 = s_{\tilde{\beta}}$, or $\langle \wt(u \Rightarrow w_yv), \tilde{\beta} \rangle = 0$ for all $\tilde{\beta} \in \Inv(r_0^{-1})$.

    \vbls

    In the first case, note that $\langle \wt(u \Rightarrow w_yv), \tilde{\beta} \rangle = 2$ implies that we must have an odd number of downwards edges in a shortest path $u \Rightarrow w_yv$, and $\tilde{\beta}$ will be the simple root that occurs most often as a weight in this path. From the diagram of $\QBG(W)$, we then see that we must have $u\tilde{\beta} < 0$ and $w_yv\tilde{\beta} > 0$, and so $\ell(x*y, r\tilde{\alpha}) = -\ell(y, v\tilde{\beta}) - \ell(x, u\tilde{\beta}) + 2 - 1 + 0 = -\ell(y, v\tilde{\beta}) - \ell(x, u\tilde{\beta}) + 1$. For length positivity to hold, we must therefore have at least one of $\ell(x, u\tilde{\beta}), \ell(y, v\tilde{\beta}) = 0$, and so either $us_{\tilde{\beta}} \in \LP(x)$ or $vs_{\tilde{\beta}} \in \LP(y)$, breaking distance-minimality, giving a contradiction.

    \vbls

    In the second case, we must have an even number of downwards edges (possibly zero) in a shortest path $u \Rightarrow w_yv$. If we do have downwards edges, then it can be seen from the diagram that $u, w_yv$ must end in the same simple reflection, and so $\Phi^+(u\tilde{\beta}) = \Phi^+(w_yv\tilde{\beta})$. For length positivity, we must therefore have that $\ell(x, u\tilde{\beta}) = \ell(y, v\tilde{\beta}) = 0$. Choosing $\tilde{\beta}$ to be simple, we find $us_{\tilde{\beta}} \in \LP(x), vs_{\tilde{\beta}} \in \LP(y)$, again breaking distance-minimality.

    Hence, we must have no downwards edges in a shortest path, i.e. $u \leq w_y v$. Again, choose $\tilde{\beta}$ to be simple. Then since $\langle \wt(u \Rightarrow w_yv), \tilde{\beta} \rangle = 0$, we must have either $us_{\tilde{\beta}} \in \LP(x)$ or $vs_{\tilde{\beta}} \in \LP(y)$ to ensure length positivity. If $us_{\tilde{\beta}} \in \LP(x)$, then we must have $us_{\tilde{\beta}} < u$ for distance-minimality, unless $u, w_yv$ differ by a single diagonal edge. This would then give that $\Phi^+(u\tilde{\beta}) = 0$, and so we also need $vs_{\tilde{\beta}} \in \LP(y)$. Distance-minimality then also implies that $\Phi^+(w_yv\tilde{\beta}) = 1$, forcing $r \notin \LP(x*y)$. The same occurs if we first assume that $vs_{\tilde{\beta}} \in \LP(y)$.

    \vbls
    
    Therefore, the only possible situation that can occur is that $u, w_yv$ differ by a single diagonal edge, giving $\Phi^+(u\tilde{\beta}) = \Phi^+(w_yv\tilde{\beta})$, and forcing $\ell(x, u\tilde{\beta}) = \ell(y, v\tilde{\beta}) = \ell(x*y, v\tilde{\beta}) = 0$. This also tells us that $\Inv(u) \subset \Inv(w_yv)$. We now check that in this case, $v \in \LP(x*y)$.
    
    \begin{align*}
        \ell(x*y, v\tilde{\alpha})\
        &=\ \ell(y, v\tilde{\alpha}) + \ell(x, u\tilde{\alpha}) + \Phi^+(w_yv\tilde{\alpha}) - \Phi^+(u\tilde{\alpha}) \\
        &<\ 0\ \iff\ \ell(x, u\tilde{\alpha}) = \ell(y, v\tilde{\alpha}) = 0,\ \Phi^+(w_yv\tilde{\alpha}) = 0,\ \Phi^+(u\tilde{\alpha}) = 1. 
    \end{align*}    

    This occurs when $\tilde{\alpha} \in \Inv(w_yv) / \Inv(u) = \{u^{-1}\tilde{\alpha}_i\}$ for some $i \in \{0, 1\}$ satisfying $w_y v= s_i u$, and when $us_{\tilde{\alpha}} \in \LP(x), vs_{\tilde{\alpha}} \in \LP(y)$. This implies that $\ell(x*y, v\tilde{\alpha}) < 0$ only when $\tilde{\alpha} = \tilde{\beta}$, since $ us_{\tilde{\beta}} \in \LP(x)$. However, $\ell(x*y, v\tilde{\beta}) = 0$ by the above, a contradiction. Hence $\ell(x*y, v\tilde{\alpha}) > 0$ for every positive $\tilde{\alpha}$ and so $v \in \LP(x*y)$.

    \vbls

    Since $\ell(x, u\tilde{\beta}) = \ell(y, v\tilde{\beta}) = \ell(x*y, v\tilde{\beta}) = 0$, by Prop. \ref{prop:lp-iff-zero}, we have that $\tilde{\beta}$ is simple $\implies us_{\tilde{\beta}} \in \LP(x) \iff vs_{\tilde{\beta}} \in \LP(y) \iff vs_{\tilde{\beta}} \in \LP(x*y)$. If $|M_{x,y}| = 2$, then there is one $\tilde{\beta}$, which must be simple, and so $M_{x,y}^y = \LP(x*y)$. If instead $|M_{x,y}| = 3$, then we can have chosen $v$ to be in between the other length positive elements for $y$ with respect to the Bruhat order, in which case there are two such $\tilde{\beta}$ which are both simple. Hence, in all cases, $M_{x,y}^y = \LP(x*y)$.

\end{proof}
\vbls
\section{Associativity}
\subsection{Notation and Setup}

We fix the notation $M_{x_1, x_2} = M_{1,2}$, and such that the Demazure product of two elements $x_1, x_2 \in \WTits$ with $(u_{1,2}, v_{1,2}) \in M_{1,2}$ is given as follows: 

\[ x_1 * x_2 = (w_1 \varepsilon^{\mu_1}) * (w_2 \varepsilon^{\mu_2}) = w_1 u_{1,2} v_{1,2}^{-1} \varepsilon^{v_{1,2} u_{1,2}^{-1} \mu_1 + \mu_2 - v_{1,2} \wt(u_{1,2} \Rightarrow w_2 v_{1,2})}. \]

\vspace{0.5\baselineskip}

We also use the notation $x_i * x_j = x_{ij} = w_{ij}\varepsilon^{\mu_{ij}}$ throughout, extending this to other objects such as $(u_{ij,k}, v_{ij,k}) \in M_{ij,k} \subseteq \LP(x_{ij}) \times \LP(x_k)$. The triple products $(x_1 * x_2) * x_3$ and $x_1 * (x_2 * x_3)$ are given in this notation below.

\vspace{-0.5\baselineskip}

\begin{align*}
    (x_1 * x_2) * x_3 &= w_1 u_{1,2} v_{1,2}^{-1} u_{12,3} v_{12,3}^{-1} \varepsilon^{v_{12,3}u_{12,3}^{-1}(v_{1,2}u_{1,2}^{-1}\mu_1 + \mu_2 - v_{1,2}\wt(u_{1,2} \Rightarrow w_2 v_{1,2})) + \mu_3 - v_{12,3}\wt(u_{12,3} \Rightarrow w_3 v_{12,3})},\\
    x_1 * (x_2 * x_3) &=  w_1 u_{1,23} v_{1,23}^{-1} \varepsilon^{v_{1,23}u_{1,23}^{-1}\mu_1 + v_{2,3}u_{2,3}^{-1}\mu_2 + \mu_3 - v_{2,3}\wt(u_{2,3} \Rightarrow w_3 v_{2,3}) - v_{1,23}\wt(u_{1,23} \Rightarrow w_2 u_{2,3} v_{2,3}^{-1} v_{1,23})}.
\end{align*}

In the rest of this section, we aim to show that, for some suitable choices of $(u_{i,j}, v_{i,j}) \in M_{i,j}$, these formulae are equal, and are therefore always equal by well-definedness. We fix that $\lev(x), \lev(y) > 1$, and so $|\LP(x_i)| \leq 2$, to ease calculations.

We know that each element $x \in \WTits$ can be written uniquely as $x = w\varepsilon^{\mu}$ for some $w \in W, \mu \in \mathcal{T}$. Hence, for the left and right triple products to be equal, we must have that the Weyl components $w_{12,3}, w_{1,23}$ and the coweight lattice components $\mu_{12,3}, \mu_{1,23}$ are each equal. 

\subsection{Weyl Component}

We can simplify the condition for the Weyl components to be equal as follows

\vspace{-0.5\baselineskip}

\begin{align*}
    w_1 u_{1,2} v_{1,2}^{-1} u_{12,3} v_{12,3}^{-1} = w_1 u_{1,23} v_{1,23}^{-1}\
    &\iff\ u_{1,2} v_{1,2}^{-1} u_{12,3} v_{12,3}^{-1} = u_{1,23} v_{1,23}^{-1}\\
    &\iff\ g := u_{1,23}^{-1} u_{1,2} v_{1,2}^{-1} u_{12,3} v_{12,3}^{-1} v_{1,23}\ =\ e.
\end{align*}

Our goal is then to show that $g = e$. We first simplify $g$ before splitting into cases.

\begin{prop}
    There is some choice of distance minimising pairs such that $v_{1,2} = u_{12,3}$ and $v_{2,3} = v_{1,23}$.
\end{prop}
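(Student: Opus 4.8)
The plan is to reduce the proposition entirely to Theorem \ref{thm:LP-of-demprod}, which identifies the length positive set of a Demazure product with the second-coordinate projection of the relevant distance minimising set. The crucial structural observation is that the two desired equalities \emph{decouple}: the condition $v_{1,2} = u_{12,3}$ constrains only the pairs drawn from $M_{1,2}$ and $M_{12,3}$, whereas $v_{2,3} = v_{1,23}$ constrains only the pairs drawn from $M_{2,3}$ and $M_{1,23}$. These are four distinct distance minimising sets, and since the intermediate products $x_{12} = x_1 * x_2$ and $x_{23} = x_2 * x_3$ are already well-defined by Theorem \ref{thm:well-defined}, the sets $M_{12,3}$ and $M_{1,23}$ are determined before any choice is made. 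Hence I would treat the two equalities separately and then combine the resulting selections.

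For the first equality, I would apply Theorem \ref{thm:LP-of-demprod} to $x_1, x_2$ (valid since $\lev(x_1), \lev(x_2) > 1 \geq 1$), giving $\LP(x_{12}) = M_{1,2}^2$. Choose any pair $(u_{12,3}, v_{12,3}) \in M_{12,3}$, which exists since distance minimising sets are always non-empty. Its first coordinate satisfies $u_{12,3} \in \LP(x_{12}) = M_{1,2}^2$, so by the very definition of $M_{1,2}^2$ there is some $u_{1,2} \in \LP(x_1)$ with $(u_{1,2}, u_{12,3}) \in M_{1,2}$; setting $v_{1,2} := u_{12,3}$ then realises $v_{1,2} = u_{12,3}$. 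Note that this works for \emph{any} choice of pair in $M_{12,3}$, since every first coordinate automatically lands in $M_{1,2}^2$.

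The second equality is handled symmetrically: applying Theorem \ref{thm:LP-of-demprod} to $x_2, x_3$ yields $\LP(x_{23}) = M_{2,3}^3$, and for any chosen $(u_{1,23}, v_{1,23}) \in M_{1,23}$ the second coordinate lies in $\LP(x_{23}) = M_{2,3}^3$, hence equals $v_{2,3}$ for a suitable $(u_{2,3}, v_{2,3}) \in M_{2,3}$. The content of the proposition is therefore essentially a bookkeeping consequence of Theorem \ref{thm:LP-of-demprod}. I expect the only point needing genuine care — and the closest thing to an obstacle — is confirming that the two selections do not interfere with one another; this holds precisely because they involve disjoint collections of distance minimising pairs and because the intermediate products $x_{12}, x_{23}$ are choice-independent, so fixing a pair in $M_{12,3}$ or in $M_{1,23}$ does not retroactively alter which sets are being projected.
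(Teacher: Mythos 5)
Your proposal is correct and matches the paper's own argument: both reduce the claim to the fact that $\LP(x_{12}) = M_{1,2}^2$ and $\LP(x_{23}) = M_{2,3}^3$, so that the first coordinate of any pair in $M_{12,3}$ and the second coordinate of any pair in $M_{1,23}$ can each be realised as the second coordinate of a pair in $M_{1,2}$ and $M_{2,3}$ respectively, giving the choices $v_{1,2} = u_{12,3}$ and $v_{2,3} = v_{1,23}$. The paper leaves the appeal to Theorem \ref{thm:LP-of-demprod} implicit; you make it explicit (along with the non-interference of the two selections), which is if anything a small improvement in rigour.
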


\begin{proof}
Note that $u_{12,3} \in \LP(x_{12})$, and so there is some pair $(-, u_{12,3}) \in M_{1,2}$. However, we may choose any pair $(u_{1,2}, v_{1,2}) \in M_{1,2}$ for the calculation of the product $x_1 * x_2$, and so we are free to fix $v_{1,2}$ by fixing the pairs $(u_{1,2}, v_{1,2}) = (-, u_{12,3})$, giving us the choice $v_{1,2} = u_{12,3}$.

Similarly, $v_{1,23} \in \LP(x_{23})$, and so we can fix the pairs $(u_{2,3}, v_{2,3}) = (-, v_{1,23})$, giving us a choice of $v_{2,3} = v_{1,23}$ as required.
\end{proof}

With these choices, we obtain the following simplified expression for $g$.

\[ g = u_{1,23}^{-1} u_{1,2} v_{1,2}^{-1} u_{12,3} v_{12,3}^{-1} v_{1,23} = u_{1,23}^{-1} u_{1,2} u_{12,3}^{-1} u_{12,3} v_{12,3}^{-1} v_{1,23} = u_{1,23}^{-1} u_{1,2} v_{12,3}^{-1} v_{1,23}. \]

\vspace{0.5\baselineskip}

Next, we investigate when the element $u_{1,23}^{-1} u_{1,2} = e$. To do this, we use the following statement.

\begin{prop}
    If $u_{12,3} = u_{2,3}$, then both $v_{12,3} = v_{1,23}$ and $u_{1,23} = u_{1,2}$.
\end{prop}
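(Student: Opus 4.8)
The plan is to unwind all four distance-minimisation problems inside a single copy of $\QBG(W)$ and to exploit that, once the choices $v_{1,2}=u_{12,3}$ and $v_{2,3}=v_{1,23}$ of the preceding proposition are in force, several of the relevant targets collapse onto one vertex. Write $a := u_{12,3}=u_{2,3}$ and $b := v_{2,3}=v_{1,23}$. By Theorem \ref{thm:LP-of-demprod} we have $\LP(x_{12})\subseteq\LP(x_2)$ and $\LP(x_{23})\subseteq\LP(x_3)$, so that $a\in\LP(x_2)$, $b\in\LP(x_3)$, and $u_{1,2},u_{1,23}\in\LP(x_1)$, with all of these being genuine vertices of one copy of $\QBG(W)$. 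With these choices $w_{12}=w_1u_{1,2}a^{-1}$ and $w_{23}=w_2ab^{-1}$, so the targets are $w_2v_{1,2}=w_2a$ for $M_{1,2}$ and $w_{23}v_{1,23}=w_2ab^{-1}b=w_2a$ for $M_{1,23}$: these coincide.

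For $u_{1,23}=u_{1,2}$ I would argue by this target-coincidence together with uniqueness. Both elements lie in $\LP(x_1)$. Fixing the second coordinate to $a\in\LP(x_2)$ in the defining inequality for $(u_{1,2},a)\in M_{1,2}$ shows that $u_{1,2}$ minimises $d(u'\Rightarrow w_2a)$ over $u'\in\LP(x_1)$; fixing the second coordinate to $b\in\LP(x_{23})$ in the defining inequality for $(u_{1,23},b)\in M_{1,23}$, and using $w_{23}b=w_2a$, shows the same for $u_{1,23}$. Theorem \ref{thm:vertical-edge}, applied to $x_1$ with the single target vertex $w_2a$, states this minimiser over $\LP(x_1)$ is unique, whence $u_{1,2}=u_{1,23}$.

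For $v_{12,3}=v_{1,23}=b$ I would reduce to uniqueness in the same way, but now the two problems no longer share a target. Fixing the first coordinate to $a\in\LP(x_{12})$ in the inequality for $(a,v_{12,3})\in M_{12,3}$ makes $v_{12,3}$ the minimiser of $d(a\Rightarrow w_{12}v')$ over $v'\in\LP(x_3)$, while $b$ is the minimiser of $d(a\Rightarrow w_2v')$ over the same set. Since $\lev(x_i)>1$ gives $|\LP(x_3)|\le 2$ by Corollary \ref{cor:LPx-size}, the case $|\LP(x_3)|=1$ is immediate and otherwise $\LP(x_3)=\{b,bs_i\}$ is a single vertical edge; the parity argument from the proof of Theorem \ref{thm:vertical-edge} then supplies a unique nearest element to $a$ in each of the two-element sets $w_{12}\LP(x_3)$ and $w_2\LP(x_3)$. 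The task is thus to show the offsets $w_{12}$ and $w_2$ select the same member of $\LP(x_3)$. I would do this with the explicit $\widehat{SL}_2$ distance formula extracted from the proof of Theorem \ref{thm:same-weight} (that $d(u\Rightarrow w)=|\ell(u)-\ell(w)|$ when $u\le w$, or when $u\ge w$ with $u,w$ same-sided, and $|\ell(u)-\ell(w)|+2$ otherwise), translating the $M_{1,2}$- and $M_{2,3}$-minimalities into constraints on the sides and lengths of $a$, $w_2a$, and $w_{12}b$.

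The main obstacle is precisely this last comparison: left multiplication by $w_{12}$ is not a graph automorphism of $\QBG(W)$, so I cannot transport the $M_{2,3}$-minimiser along $w_{12}w_2^{-1}$, and the sidedness of $w_{12}v'$ must be controlled by hand. I expect to manage this by a finite case analysis on which sides the sets $\LP(x_1),\LP(x_2),\LP(x_3)$ occupy and on the parity forced by the vertical edge in $\LP(x_3)$, the bound $|\LP(x_i)|\le 2$ keeping the casework finite. A possibly cleaner alternative, which I would attempt in parallel, is to characterise the $M_{12,3}$-partner of $a$ through the vanishing length-functional criterion of Proposition \ref{prop:lp-iff-zero} applied to $x_{12}$, and to match it against the criterion that already singled out $b$ as the $M_{2,3}$-partner of $a$; I would keep whichever route avoids the heavier computation.
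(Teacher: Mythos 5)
Your argument for the second half of the statement, $u_{1,23}=u_{1,2}$, is correct and is essentially the paper's own proof: the computation $w_{23}v_{1,23}=w_2ab^{-1}b=w_2a=w_2v_{1,2}$ shows the two source-minimisation problems over $\LP(x_1)$ have the same target vertex, and uniqueness from Theorem \ref{thm:vertical-edge} finishes it.

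There is, however, a genuine gap in your treatment of the first half, $v_{12,3}=v_{1,23}$, and it stems from a misreading of the definition of $M_{x,y}$. The distance being minimised over $(u,v)\in\LP(x)\times\LP(y)$ is $d(u\Rightarrow w_y v)$, where $w_y$ is the Weyl component of the \emph{second} factor $y$. For $M_{12,3}=M_{x_{12},x_3}$ the second factor is $x_3$, so the target is $w_3 v'$ --- not $w_{12}v'$ as you wrote. The same is true for $M_{2,3}$: its target is also $w_3 v'$. So the two problems \emph{do} share a target, and with $u_{12,3}=u_{2,3}=a$ they also share a source; both $v_{12,3}$ and $v_{2,3}$ minimise $d(a\Rightarrow w_3 v')$ over $v'\in\LP(x_3)$, and uniqueness of the minimiser in the vertically connected set $w_3\LP(x_3)$ (Theorem \ref{thm:vertical-edge} again) gives $v_{12,3}=v_{2,3}=v_{1,23}$ immediately --- this is exactly the paper's one-line argument. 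Because of the misidentified target you instead declare the comparison of ``offsets $w_{12}$ and $w_2$'' to be the main obstacle and propose, but do not carry out, a case analysis on sidedness and lengths; as written, the first half of the proposition is therefore not proved, and the plan you sketch attacks a problem that does not arise.
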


\begin{proof}
Thm. \ref{thm:vertical-edge} immediately gives us the first part of the claim by noting that $u_{12,3}, u_{2,3} \in \LP(x_2)$.  For the second part, note that if $w_2 v_{1,2} = w_{23}v_{1,23}$, then $u_{1,2} = u_{1,23}$, by Thm. \ref{thm:vertical-edge}, and so we consider the following calculation:

\vspace{-\baselineskip}

\begin{align*}
    (w_2 v_{1,2})^{-1}(w_{23}v_{1,23}) \
    =\ v_{1,2}^{-1} w_2^{-1} w_2 u_{2,3} v_{2,3}^{-1} v_{1,23}\
    =\ v_{1,2}^{-1} u_{2,3}\
    =\ u_{12,3}^{-1} u_{2,3}.
\end{align*}

Hence, if $u_{12,3} = u_{2,3}$, then $(w_2 v_{1,2})^{-1}(w_{23}v_{1,23}) = u_{12,3}^{-1} u_{2,3} = e$, giving us that $w_2 v_{1,2} = w_{23}v_{1,23}$ and so $u_{1,2} = u_{1,23}$, proving the rest of the claim.
\end{proof}

As an immediate consequence, we get that if $u_{12,3} = u_{2,3}$, then $g = e$ and we are done, and so henceforth we will assume that we cannot have $u_{12,3} = u_{2,3}$, so in particular $|\LP(x_2)| = 2$. Note also that everything we have stated so far applies to any double affine Weyl semigroup with a well-defined Demazure product, however for the rest of this proof we will work explicitly with the case of $\widehat{SL}_2$. 

By Thm. \ref{thm:vertical-edge}, $\LP(x_2)$ is connected vertically in $\QBG(W)$. We may therefore write $u_{12,3} = u_{2,3}s_i$ for some $i \in \{0,1\}$, which gives us $w_{23}v_{1,23} = w_2v_{1,2}s_i$.

\vspace{\baselineskip}

We split the remaining work into two cases; either $u_{1,23} \neq u_{1,2}$, or $u_{1,23} = u_{1,2}$.

In the first instance, note that $\LP(x_2) = \{v_{1,2}, v_{1,2}s_i\}$ and $\LP(x_1) = \{u_{1,2}, u_{1,23}\}$. Hence, for distance minimality to hold, we must have a situation as in Fig. \ref{fig:qbg-portion}, otherwise we would require $u_{1,2} = u_{1,23}$. Therefore $|M_{1,2}| = 2$, and from the diagram we find that $u_{1,2} = u_{1,23}s_i$.Furthermore, by assumption we cannot choose $u_{12,3} = u_{2,3}$ despite having two options for $u_{12,3}$, and so again we must have an alignment as in Fig. \ref{fig:qbg-portion}. Therefore $v_{12,3} = v_{2,3}s_i$, and so $g = u_{1,23}^{-1}u_{1,2}v_{12,3}^{-1} = s_i s_i = e$.

In the second instance, we have $u_{1,23} = u_{1,2}$. If we also had $|M_{2,3}| = 2$, then $\LP(x_3) = \{v_{2,3}, v_{2,3}s_i\}$, and so we would have the freedom to choose $v_{1,23} = v_{2,3}$. For distance minimality we would therefore need $w_{23}v_{1,23} =w_2v_{1,2}$, a contradiction. Hence $|M_{2,3}| = 1$ and so $v_{2,3} = v_{12,3}$ is forced, giving $g = e$ as required. We have therefore proved the following:

\begin{prop}
    The generalised Demazure product associated to affine $\widehat{SL}_2$ is associative in the Weyl component for level greater than one.
\end{prop}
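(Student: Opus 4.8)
The plan is to translate associativity of the Weyl component into a single group-theoretic identity and then verify it by exploiting the freedom that well-definedness affords. Comparing the Weyl parts of $(x_1*x_2)*x_3$ and $x_1*(x_2*x_3)$, they coincide exactly when $g := u_{1,23}^{-1} u_{1,2} v_{1,2}^{-1} u_{12,3} v_{12,3}^{-1} v_{1,23} = e$, so the entire problem reduces to showing $g = e$. Here Theorem \ref{thm:well-defined} is essential: since each double product is independent of the distance-minimising pair used to compute it, I am free to pick convenient representatives. Using Theorem \ref{thm:LP-of-demprod}, which identifies $\LP(x*y)$ with the set of $v$-components $M_{x,y}^y \subseteq \LP(y)$, I would realise $u_{12,3} \in \LP(x_{12})$ as a second coordinate of a pair in $M_{1,2}$, and likewise $v_{1,23} \in \LP(x_{23})$ as a second coordinate of a pair in $M_{2,3}$. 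This lets me align $v_{1,2} = u_{12,3}$ and $v_{2,3} = v_{1,23}$, telescoping $g$ down to $u_{1,23}^{-1} u_{1,2} v_{12,3}^{-1} v_{1,23}$.

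With $g$ simplified, I would first dispose of the degenerate case $u_{12,3} = u_{2,3}$. Here uniqueness of distance minimisers (Theorem \ref{thm:vertical-edge}) forces $v_{12,3} = v_{1,23}$, and a short computation of $(w_2 v_{1,2})^{-1}(w_{23} v_{1,23})$ forces $u_{1,23} = u_{1,2}$ as well, so every factor of $g$ cancels. This reduces matters to the situation $u_{12,3} \neq u_{2,3}$, which in particular requires $|\LP(x_2)| = 2$. This is precisely where the level greater than one hypothesis enters: by Corollary \ref{cor:LPx-size} it keeps every $\LP(x_i)$ of size at most two, so that each such set is a single vertical edge in $\QBG(W)$ (Corollary \ref{cor:LPx-connected}) and the combinatorics remains finite and tractable.

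The heart of the argument is then a short case split on whether $u_{1,23} = u_{1,2}$, carried out inside the explicit graph of Figure \ref{fig:full-QBG}. Writing $u_{12,3} = u_{2,3} s_i$ using the vertical connectivity of $\LP(x_2)$, I would argue that in the case $u_{1,23} \neq u_{1,2}$ distance-minimality forces the ``diagonal'' configuration of Figure \ref{fig:qbg-portion}, pinning down both $u_{1,2} = u_{1,23} s_i$ and $v_{12,3} = v_{2,3} s_i$ with a \emph{common} reflection $s_i$, so that the surviving factors of $g$ collapse to $s_i s_i = e$. In the complementary case $u_{1,23} = u_{1,2}$, the same distance-minimality constraint rules out $|M_{2,3}| = 2$, since otherwise one could choose $v_{1,23} = v_{2,3}$ and contradict $w_{23} v_{1,23} \neq w_2 v_{1,2}$; hence $|M_{2,3}| = 1$, the equality $v_{12,3} = v_{2,3}$ is forced, and again $g = e$.

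The main obstacle I anticipate is the simultaneous bookkeeping of the three distance-minimising sets $M_{1,2}$, $M_{2,3}$, and $M_{12,3}$ in the case $u_{1,23} \neq u_{1,2}$: one must verify that the same reflection $s_i$ governing $\LP(x_2)$ also relates the pairs in $M_{1,2}$ and in $M_{12,3}$, so that the two remaining factors of $g$ genuinely cancel rather than composing into a nontrivial word. Theorem \ref{thm:vertical-edge} is the tool that keeps this under control, since once an endpoint is fixed it guarantees a unique minimiser on the other side, preventing any spurious branching in $\QBG(W)$ and ensuring the configurations of Figure \ref{fig:qbg-portion} are the only ones that can arise.
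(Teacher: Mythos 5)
Your proposal is correct and follows essentially the same route as the paper: the same reduction to $g = e$, the same choices $v_{1,2} = u_{12,3}$ and $v_{2,3} = v_{1,23}$ justified by well-definedness and Theorem \ref{thm:LP-of-demprod}, the same disposal of the degenerate case $u_{12,3} = u_{2,3}$, and the same final case split on whether $u_{1,23} = u_{1,2}$, resolved via the configuration of Figure \ref{fig:qbg-portion} and the uniqueness statement of Theorem \ref{thm:vertical-edge}. There are no substantive differences from the paper's argument.
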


\subsection{Lattice Component}

We can write down the conditions in the coweight lattice part for associativity directly from the left and right triple products that were calculated earlier. In this section, we continue to assume that $\lev(x), \lev(y) > 1$.

\vspace{-0.5\baselineskip}

\begin{align*}
    &v_{12,3}u_{12,3}^{-1}(v_{1,2}u_{1,2}^{-1}\mu_1 + \mu_2 - v_{1,2}\wt(u_{1,2} \Rightarrow w_2 v_{1,2})) + \mu_3 - v_{12,3}\wt(u_{12,3} \Rightarrow w_3 v_{12,3}) \\
    =\  &v_{1,23}u_{1,23}^{-1}\mu_1 + v_{2,3}u_{2,3}^{-1}\mu_2 + \mu_3 - v_{2,3}\wt(u_{2,3} \Rightarrow w_3 v_{2,3}) - v_{1,23}\wt(u_{1,23} \Rightarrow w_2 u_{2,3} v_{2,3}^{-1} v_{1,23}).
\end{align*}

\vspace{0.5\baselineskip}

We can immediately reduce this expression; the $\mu_1$ components cancel out, as the corresponding equations are simply associativity in the Weyl part, and the $\mu_3$ components cancel trivially. So, as long as we make the same choices as we did previously, namely that $v_{1,2} = u_{12,3}$ and $v_{2,3} = v_{1,23}$, we obtain the following condition:

\vspace{-0.5\baselineskip}

\begin{align*}
    &v_{12,3}u_{12,3}^{-1}\mu_2 - v_{12,3}u_{12,3}^{-1}v_{1,2}\wt(u_{1,2} \Rightarrow w_2 v_{1,2}) - v_{12,3}\wt(u_{12,3} \Rightarrow w_3 v_{12,3}) \\
    =\  &v_{2,3}u_{2,3}^{-1}\mu_2 - v_{2,3}\wt(u_{2,3} \Rightarrow w_3 v_{2,3}) - v_{1,23}\wt(u_{1,23} \Rightarrow w_2 u_{2,3} v_{2,3}^{-1} v_{1,23}).
\end{align*}

\vspace{0.5\baselineskip}

We can then apply our choices and group together different elements to further simplify this expression as much as we can before investigating individual components.

\vspace{-0.5\baselineskip}

\begin{align*}
    (v_{12,3}u_{12,3}^{-1}\mu_2 - v_{2,3}u_{2,3}^{-1}\mu_2)
    &- (v_{12,3}\wt(u_{1,2} \Rightarrow w_2 v_{1,2}) - v_{1,23}\wt(u_{1,23} \Rightarrow w_2 u_{2,3})) \\
    &- (v_{12,3}\wt(u_{12,3} \Rightarrow w_3 v_{12,3}) - v_{1,23}\wt(u_{2,3} \Rightarrow w_3 v_{2,3}))\
    =\ 0.
\end{align*}

Looking at this, we define the following three objects $\eta_1, \eta_2, \eta_3 \in P^{\vee}$ such that our conditions reduce to showing that $\eta_1 - \eta_2 - \eta_3 = 0$.

\vspace{-0.5\baselineskip}

\begin{align*}
    \eta_1 &:= v_{12,3}u_{12,3}^{-1}\mu_2 - v_{2,3}u_{2,3}^{-1}\mu_2, \\
    \eta_2 &:= v_{12,3}\wt(u_{1,2} \Rightarrow w_2 v_{1,2}) - v_{1,23}\wt(u_{1,23} \Rightarrow w_2 u_{2,3}), \\
    \eta_3 &:= v_{12,3}\wt(u_{12,3} \Rightarrow w_3 v_{12,3}) - v_{1,23}\wt(u_{2,3} \Rightarrow w_3 v_{2,3}).
\end{align*}

We now begin to investigate what values the elements $\eta_1, \eta_2, \eta_3$ can take, and what conditions need to be met for these values to be obtained.

\begin{prop}
    If the following three conditions are met, then $\eta_2 = -v_{2,3}(\tilde{\alpha}_i^{\vee})$, otherwise $\eta_2 = 0$.
    \begin{enumerate}[label=\normalfont(\roman*)]
        \item $u_{2,3} = v_{1,2}s_i$.
        \item $v_{2,3} = v_{12,3}$.
        \item $\ell(u_{1,2}) \geq \ell(w_2 v_{1,2}) > \ell(w_2 v_{1,2} s_i)$.
    \end{enumerate}
\end{prop}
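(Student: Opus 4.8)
The plan is to reduce $\eta_2$ to a single difference of path-weights in $\QBG(W)$ and then read that difference off the structure of the graph in Fig.~\ref{fig:full-QBG}. Recall the standing hypotheses of this section: we may take $v_{1,2}=u_{12,3}$, $v_{2,3}=v_{1,23}$, and $u_{12,3}=u_{2,3}s_i$, so that $u_{2,3}=v_{1,2}s_i$ and $w_2u_{2,3}=w_2v_{1,2}s_i$; thus condition~(i) is already forced by the setup. The first move is to split according to the two scenarios isolated in the Weyl-component analysis, which are governed precisely by condition~(ii): either $v_{12,3}\neq v_{2,3}$ (equivalently $u_{1,23}=u_{1,2}s_i$, the configuration of Fig.~\ref{fig:qbg-portion}), or $v_{12,3}=v_{2,3}$ (equivalently $u_{1,23}=u_{1,2}$).

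First I would dispose of the branch where condition~(ii) fails. Here we are in the two-element distance-minimising picture of Fig.~\ref{fig:qbg-portion}, in which both $u_{1,2}\to w_2v_{1,2}$ and $u_{1,23}=u_{1,2}s_i\to w_2v_{1,2}s_i=w_2u_{2,3}$ are single diagonal (weight-zero) edges. Hence $\wt(u_{1,2}\Rightarrow w_2v_{1,2})=\wt(u_{1,23}\Rightarrow w_2u_{2,3})=0$, so $\eta_2=0$ immediately, matching the ``otherwise'' clause.

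When condition~(ii) holds we have $v_{12,3}=v_{1,23}=v_{2,3}$ and $u_{1,23}=u_{1,2}$, so the two paths share a common source and $\eta_2=v_{2,3}\bigl(\wt(u_{1,2}\Rightarrow c)-\wt(u_{1,2}\Rightarrow cs_i)\bigr)$ with $c:=w_2v_{1,2}$. The core computation is the weight difference $D:=\wt(u_{1,2}\Rightarrow c)-\wt(u_{1,2}\Rightarrow cs_i)$ between shortest paths to the two neighbours $c$ and $cs_i$. Using the explicit shortest-path description from Thm.~\ref{thm:same-weight} (all-downward along one side, or downward-vertical plus one diagonal when crossing sides), I would show that when $\ell(u_{1,2})\geq\ell(c)>\ell(cs_i)$ — precisely condition~(iii) — the shortest path to $cs_i$ is the shortest path to $c$ extended by the single downward edge $c\to cs_i$, which carries weight $\tilde{\alpha}_i^{\vee}$ since $s_i$ is the last letter of $c$. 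Thus $D=-\tilde{\alpha}_i^{\vee}$ and $\eta_2=-v_{2,3}(\tilde{\alpha}_i^{\vee})$. A pleasant point is that this value is independent of whether $u_{1,2}$ and $c$ are same-sided or opposite-sided: in the opposite-sided case the extra downward edge lies on $u_{1,2}$'s side descending from height $\ell(c)-1$, but a parity check on last letters shows that edge also carries weight $\tilde{\alpha}_i^{\vee}$, so the two cases agree.

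The main obstacle is the converse within the condition-(ii) branch: showing $D=0$ whenever~(iii) fails. The subtlety is that $c$ and $cs_i$ are not free, since both $v_{1,2}$ and $v_{1,2}s_i=u_{2,3}$ lie in $\LP(x_2)$; by Prop.~\ref{prop:lp-iff-zero} this forces a vanishing length functional and thereby ties the position of $c=w_2v_{1,2}$ relative to $cs_i$ to the Weyl element $w_2$, so that $\LP(x_2)$ cannot be prescribed independently of $w_2$. I would combine this constraint with the distance-minimality inequality $d(u_{1,2}\Rightarrow c)\leq d(u_{1,2}\Rightarrow cs_i)$ coming from $(u_{1,2},v_{1,2})\in M_{1,2}$ to exclude the configurations that would otherwise produce a nonzero $D$ (for example $u_{1,2}$ and $c$ opposite-sided with $\ell(u_{1,2})=\ell(c)-1$, which naively gives $D\neq 0$ but is ruled out by either distance-minimality or the $\LP(x_2)$-linkage), leaving only cases where both shortest paths have weight zero. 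This is bookkeeping rather than a single hard step; the finiteness of the relevant portion of $\QBG(\widehat{SL}_2)$ together with the bound $|\LP(x_i)|\leq 2$ from Cor.~\ref{cor:LPx-size} keeps the number of configurations small.
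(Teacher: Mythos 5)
Your overall strategy --- reduce $\eta_2$ to a difference of shortest-path weights in $\QBG(W)$ and read the answer off the explicit graph --- is the same as the paper's, and your core computation in the case where all three conditions hold (the extra downward edge $w_2v_{1,2}\to w_2v_{1,2}s_i$ contributing $\tilde{\alpha}_i^{\vee}$, including the parity check when $u_{1,2}$ and $w_2v_{1,2}$ are opposite-sided) matches the paper's identity $\wt(u_{1,23}\Rightarrow w_{23}v_{1,23})=\wt(u_{1,2}\Rightarrow w_2v_{1,2})+\tilde{\alpha}_i^{\vee}$. However, there are two genuine gaps. First, condition (i) is \emph{not} forced by the setup: the hypothesis $u_{12,3}=u_{2,3}s_i$ was adopted in the Weyl-component argument only \emph{after} the case $u_{12,3}=u_{2,3}$ had been disposed of there, and the lattice-component case list that this proposition feeds into explicitly begins with the case $u_{2,3}=u_{12,3}$, i.e.\ the failure of (i). As written, your proof never establishes the ``otherwise'' clause in that case. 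The paper handles it directly: if $u_{2,3}=v_{1,2}$ then $w_{23}v_{1,23}=w_2u_{2,3}v_{2,3}^{-1}v_{1,23}=w_2v_{1,2}$, uniqueness of distance minimisers (Thm.~\ref{thm:vertical-edge}) forces $u_{1,23}=u_{1,2}$ and $v_{12,3}=v_{2,3}$, and the two weight terms in $\eta_2$ coincide. This is easy to add, but it is a case the proposition is later applied to.

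Second, the branch where (ii) holds and (iii) fails --- precisely where the paper invests its effort, via the four configurations of its figure --- is only sketched. You correctly identify the dangerous configurations and the two tools that exclude them (distance-minimality of $(u_{1,2},v_{1,2})$, and the constraint tying $w_2v_{1,2}$ to $w_2v_{1,2}s_i$ through $v_{1,2},v_{1,2}s_i\in\LP(x_2)$ via Prop.~\ref{prop:lp-iff-zero}), but you do not carry out the exclusion; calling it ``bookkeeping'' understates that this is the substantive content of the statement. One organisational remark: your appeal to the equivalence between $v_{12,3}\neq v_{2,3}$ and $u_{1,23}=u_{1,2}s_i$ is legitimate, but only as the output of the Weyl-component analysis (which itself assumes (i)); the paper instead shows that once (i) and (iii) hold, (ii) is automatic (if $v_{2,3}=v_{12,3}s_i$ then $|M_{2,3}|=2$ leaves $v_{1,23}$ unconstrained, contradicting the configuration forced by (iii)), so its three listed conditions are not logically independent. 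Either route works, but yours should cite that equivalence explicitly rather than treat it as defining the case split.
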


\begin{proof}
    First, assume condition (i) cannot hold. Since $|\LP(x)| \leq 2$, we must have that $u_{2,3} = v_{1,2}$, giving us the following.
    \[ w_{23}v_{1,23} = w_2 u_{2,3} v_{2,3}^{-1} v_{1,23} = w_2 u_{2,3} = w_2 v_{1,2} \]
    Where we have used the choice that $v_{2,3} = v_{1,23}$. However, uniqueness of distance minimising elements then forces $u_{1,2} = u_{1,23}$. We also have $u_{2,3} = v_{1,2} = u_{12,3}$ from our previous choice, and so $v_{2,3} = v_{12,3}$ by uniqueness in the other direction. We then have:

    \vbls[-1]
    
    \begin{align*}
        \eta_2\ &=\ v_{12,3}\wt(u_{1,2} \Rightarrow w_2 v_{1,2}) - v_{1,23}\wt(u_{1,23} \Rightarrow w_2 u_{2,3}) \\
        &=\ v_{2,3}\wt(u_{1,2} \Rightarrow w_2 v_{1,2}) - v_{2,3}\wt(u_{1,2} \Rightarrow w_2 v_{1,2})\ =\ 0.
    \end{align*}

    Now we assume that condition (i) holds, and that $u_{2,3} = v_{1,2}s_i$ for some $i \in \{0,1\}$. It then immediately follows that $|\LP(x_2)| = 2$, with $w_{23} v_{1,23} = v_2 v_{1,2} s_i$ following an earlier calculation, and $u_{2,3} = u_{12,3} s_i$ from our choices. We can then split this into 4 different cases depending on the positioning of $u_{1,2}, u_{1,23}, w_2v_{1,2}, w_{23}v_{1,23}$ within $\QBG(W)$, up to the involution $s_0 \leftrightarrow s_1$ which corresponds to mirroring the diagrams along the vertical centre.

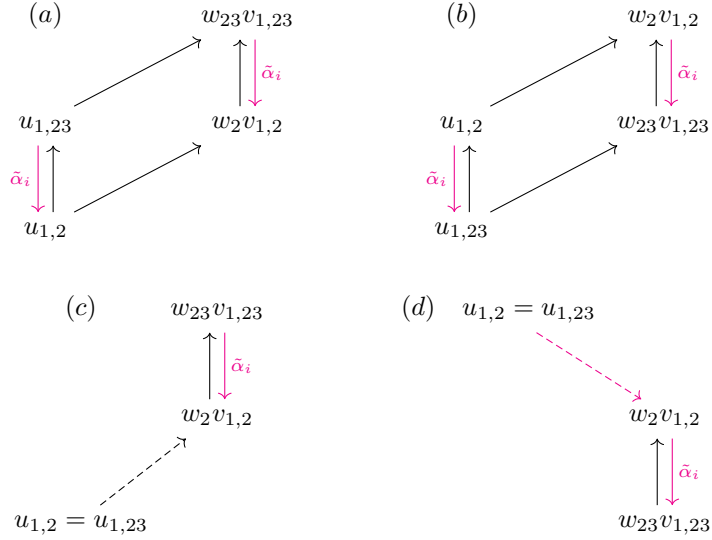
\begin{figure}[ht]
\begin{center}
\begin{tikzcd}[row sep=0.9cm, column sep=0.7cm]
(a) & & w_{23} v_{1,23}
\arrow[d, magenta, shift left = 0.1cm, "\tilde{\alpha}_i"] \\
u_{1,23}
\arrow[urr]
\arrow[d, magenta, shift right = 0.1cm, "\tilde{\alpha}_i"']
& & w_{2} v_{1,2}
\arrow[u, shift left = 0.1cm] \\
u_{1,2}
\arrow[u, shift right = 0.1cm] 
\arrow[urr]
\end{tikzcd}
\qquad\qquad
\begin{tikzcd}[row sep=0.9cm, column sep=0.7cm]
(b) & & w_2 v_{1,2}
\arrow[d, magenta, shift left = 0.1cm, "\tilde{\alpha}_i"] \\
u_{1,2}
\arrow[urr]
\arrow[d, magenta, shift right = 0.1cm, "\tilde{\alpha}_i"']
& & w_{23} v_{1,23}
\arrow[u, shift left = 0.1cm] \\
u_{1,23}
\arrow[u, shift right = 0.1cm] 
\arrow[urr]
\end{tikzcd}
\\ \vspace{\baselineskip}
\begin{tikzcd}[row sep=0.9cm, column sep=0cm]
(c) & w_{23} v_{1,23}
\arrow[d, magenta, shift left = 0.1cm, "\tilde{\alpha}_i"] \\
& w_{2} v_{1,2} 
\arrow[u, shift left = 0.1cm] \\
u_{1,2} = u_{1,23}
\arrow[ur, dashed]
\end{tikzcd}
\qquad\qquad
\begin{tikzcd}[row sep=0.9cm, column sep=0cm]
(d) \quad u_{1,2} = u_{1,23}
\arrow[dr, magenta, dashed] \\
& w_{2} v_{1,2}
\arrow[d, magenta, shift left = 0.1cm, "\tilde{\alpha}_i"] \\
& w_{23} v_{1,23}
\arrow[u, shift left = 0.1cm]
\end{tikzcd}
\caption{Portions of $\QBG(W)$ to be considered in associativity.}
\end{center}
\end{figure}

This exhausts all possible cases, as any other arrangement will violate either uniqueness of distance-minimising elements or the minimality itself. 

In cases (a), (b), and (c), the weights that $\eta_2$ depends on are both 0 and so $\eta_2 = 0$. Hence, the only case left to check is (d), which occurs precisely when $\ell(u_{1,2}) \geq \ell(w_2 v_{1,2}) > \ell(w_2 v_{1,2} s_i)$, i.e. when condition (iii) holds.

\vspace{\baselineskip}

We can now further split this into 2 more cases depending on whether or not condition (ii) holds. First, assume that it does not hold, and so $v_{2,3} = v_{12,3}s_i$. Then we must have that $|M_{2,3}| = 2$, as we have two different distance minimising paths from $\LP(x_2)$ to $w_3\LP(x_3)$, namely $u_{2,3} \Rightarrow w_3 v_{2,3}$ and $u_{12,3} \Rightarrow w_3 v_{12,3}$. However, this means that the element $v_{1,23}$ is unrestricted, which contradicts diagram (d), as $w_{23} v_{12,3} s_i = w_{23} v_{2,3}$ would be closer to $u_{1,23}$ than $v_{1,23}$. Hence, condition (ii) must hold and so $v_{2,3} = v_{12,3}$. We are then left to calculate $\eta_2$ directly. First, note that $\wt(u_{1,23} \Rightarrow w_{23} v_{1,23}) = \wt(u_{1,2} \Rightarrow w_2 v_{1,2}) + \tilde{\alpha}^{\vee}_i$, as indicated from diagram (d). Hence:

\vspace{-\baselineskip}

\begin{align*}
    \eta_2
    &= v_{12,3}\wt(u_{1,2} \Rightarrow w_2 v_{1,2}) - v_{1,23}\wt(u_{1,23} \Rightarrow w_2 u_{2,3}) \\
    &= v_{2,3}\wt(u_{1,2} \Rightarrow w_2 v_{1,2}) - v_{2,3}(\wt(u_{1,2} \Rightarrow w_2 v_{1,2}) + \tilde{\alpha}^{\vee}_i)
    = - v_{2,3}(\tilde{\alpha}^{\vee}_i).
\end{align*}

\vspace{-\baselineskip}

\end{proof}

Now that we have proved claim 3.1, we next want to investigate $\eta_3$. The following proposition gives the following values. The proof is analogous to the case of $\eta_2$.

\begin{prop}
    If the following three conditions are met, then $\eta_3 = v_{2,3}(\tilde{\alpha}_i^{\vee})$, otherwise $\eta_3 = 0$.
    \begin{enumerate}[label=\normalfont(\roman*)]
        \item $u_{2,3} = v_{1,2}s_i$.
        \item $v_{2,3} = v_{12,3}$.
        \item $\ell(u_{2,3}s_i) > \ell(u_{2,3}) \geq \ell(w_3 v_{2,3})$.
    \end{enumerate}
\end{prop}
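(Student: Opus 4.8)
The plan is to mirror the proof of the preceding proposition for $\eta_2$, interchanging the roles of source and target in the quantum Bruhat graph. After imposing the standing choices $v_{1,2} = u_{12,3}$ and $v_{2,3} = v_{1,23}$, the quantity $\eta_3 = v_{12,3}\wt(u_{12,3} \Rightarrow w_3 v_{12,3}) - v_{1,23}\wt(u_{2,3} \Rightarrow w_3 v_{2,3})$ becomes a difference of two weighted shortest-path weights whose sources $u_{12,3}, u_{2,3}$ lie in $\LP(x_2)$ and whose targets $v_{12,3}, v_{2,3}$ lie in $\LP(x_3)$. Since $\lev(x_2), \lev(x_3) > 1$ forces $|\LP(x_2)|, |\LP(x_3)| \leq 2$, I would first split on whether condition (i) holds.

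If condition (i) fails, then because $|\LP(x_2)| \leq 2$ and $u_{12,3} = v_{1,2} \in \LP(x_2)$, the only remaining possibility is $u_{2,3} = v_{1,2} = u_{12,3}$. In this situation the proposition above stating that $u_{12,3} = u_{2,3}$ forces $v_{12,3} = v_{1,23}$ applies, and combined with the choice $v_{2,3} = v_{1,23}$ it gives $v_{12,3} = v_{2,3}$. The two shortest-path weights appearing in $\eta_3$ then coincide, so $\eta_3 = 0$, matching the claim.

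If condition (i) holds, then $u_{2,3} = v_{1,2}s_i = u_{12,3}s_i$, so $|\LP(x_2)| = 2$ and the two sources differ by the single edge $u_{12,3} \to u_{2,3}$. I would then enumerate the possible relative positions of $u_{12,3}, u_{2,3}, w_3 v_{12,3}, w_3 v_{2,3}$ in $\QBG(W)$, exactly as in the diagrams (a)--(d) of the $\eta_2$ argument but now stacking the two sources rather than the two targets, reducing to the single nontrivial configuration (the analog of (d)) by invoking uniqueness and minimality of distance-minimising elements (Thm. \ref{thm:vertical-edge}). In the analogues of (a)--(c) both relevant shortest paths consist solely of upward edges, so each weight vanishes and $\eta_3 = 0$; the analog of (d) occurs precisely when condition (iii), $\ell(u_{2,3}s_i) > \ell(u_{2,3}) \geq \ell(w_3 v_{2,3})$, places the source $u_{12,3}$ strictly above $u_{2,3}$, which lies weakly above the target. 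As for $\eta_2$, I would argue that in this configuration condition (ii), $v_{2,3} = v_{12,3}$, is forced: were $v_{2,3} = v_{12,3}s_i$ instead, then $|M_{2,3}| = 2$, leaving $v_{1,23}$ unconstrained and contradicting the distance-minimality forced by the configuration.

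Finally, in this last configuration the downward edge $u_{12,3} \to u_{2,3}$ is a quantum edge of weight $\tilde\alpha_i^\vee$, so a shortest path from $u_{12,3}$ factors through $u_{2,3}$ and $\wt(u_{12,3} \Rightarrow w_3 v_{2,3}) = \tilde\alpha_i^\vee + \wt(u_{2,3} \Rightarrow w_3 v_{2,3})$. Substituting $v_{12,3} = v_{2,3}$ then collapses $\eta_3$ to $v_{2,3}(\tilde\alpha_i^\vee)$, the asserted value. I expect the main obstacle to be the case enumeration together with the verification that condition (ii) is forced in the nontrivial configuration, since that is the delicate step requiring Thm. \ref{thm:vertical-edge} to rule out the alternative alignment; one must also track the orientation carefully so that the residual coroot appears with a \emph{positive} sign here, in contrast to the negative sign obtained for $\eta_2$.
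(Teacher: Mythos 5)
Your proof is correct and is exactly the argument the paper intends: the paper gives no details for this proposition beyond stating that the proof is analogous to the $\eta_2$ case. Your writeup correctly instantiates that analogy --- reducing to $u_{2,3}=u_{12,3}$ when (i) fails, enumerating the mirrored configurations, forcing condition (ii) via the $|M_{2,3}|=2$ contradiction, and obtaining the opposite sign because the longer path is now the one sourced at $u_{12,3}$.
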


We now have complete descriptions of the behaviour of $\eta_2, \eta_3$, so now we want to consider the value of $\eta_1$ under the same conditions in order to show that $\eta_1 = \eta_2 + \eta_3$. Firstly, we list the different cases that can appear, along with the value of $\eta_2 + \eta_3$ in each case.

\begin{enumerate}[label=(\roman*)]
    \item $u_{2,3} = u_{12,3},\ \implies\ \eta_2 + \eta_3 = 0$.
    \item $u_{2,3} = u_{12,3} s_i,\ v_{2,3} = v_{12,3} s_i,\ \implies\ \eta_2 + \eta_3 = 0$.
    \item $u_{2,3} = u_{12,3} s_i,\ v_{2,3} = v_{12,3}$, and:
    \begin{enumerate}[label=(\alph*)]
        \item $\ell(u_{1,2}) < \ell(w_2 v_{1,2}) < \ell(w_2 v_{1,2} s_i),\ \ell(u_{2,3}s_i) < \ell(u_{2,3}) < \ell(w_3 v_{2,3}),\ \implies\ \eta_2 + \eta_3 = 0$.
        \item $\ell(u_{1,2}) \geq \ell(w_2 v_{1,2}) > \ell(w_2 v_{1,2} s_i),\ \ell(u_{2,3}s_i) > \ell(u_{2,3}) \geq \ell(w_3 v_{2,3}),\ \implies\ \eta_2 + \eta_3 = 0$.
        \item $\ell(u_{1,2}) \geq \ell(w_2 v_{1,2}) > \ell(w_2 v_{1,2} s_i),\ \ell(u_{2,3}s_i) < \ell(u_{2,3}) < \ell(w_3 v_{2,3}),\ \implies\ \eta_2 + \eta_3 = -v_{2,3}(\tilde{\alpha}^{\vee}_i)$.
        \item $\ell(u_{1,2}) < \ell(w_2 v_{1,2}) < \ell(w_2 v_{1,2} s_i),\ \ell(u_{2,3}s_i) > \ell(u_{2,3}) \geq \ell(w_3 v_{2,3}),\ \implies\ \eta_2 + \eta_3 = v_{2,3}(\tilde{\alpha}^{\vee}_i)$.
    \end{enumerate}
\end{enumerate}

Note that this exhausts all cases, as we must have either $\ell(u_{1,2}) < \ell(w_2 v_{1,2}) < \ell(w_2 v_{1,2} s_i)$ or $\ell(u_{1,2}) \geq \ell(w_2 v_{1,2}) > \ell(w_2 v_{1,2} s_i)$, by uniqueness of distance minimising elements in $\QBG(W)$. Finally, we are left to calculate the value of $\eta_1$ in each case, and show that we must have $\eta_1 = \eta_2 + \eta_3$.

\vspace{\baselineskip}

\textbf{Case (i)}\quad We have that $u_{2,3} = u_{12,3}$ and so $v_{2,3} = v_{12,3}$ by uniqueness, which immediately gives: \[ \eta_1 = v_{12,3}u_{12,3}^{-1}\mu_2 - v_{2,3}u_{2,3}^{-1}\mu_2 = v_{2,3}u_{2,3}^{-1}\mu_2 - v_{2,3}u_{2,3}^{-1}\mu_2 = 0.\]

\textbf{Case (ii)}\quad We have $u_{2,3} = u_{12,3}s_i$ and $v_{2,3} = v_{12,3} s_i$, and again we immediately find: \[\eta_1 = v_{12,3}u_{12,3}^{-1}\mu_2 - v_{2,3}u_{2,3}^{-1}\mu_2 = v_{2,3} s_i s_i u_{2,3}^{-1}\mu_2 - v_{2,3} u_{2,3}^{-1}\mu_2 = 0.\]

\textbf{Case (iii)}\quad Before considering the four subsequent cases separately, let us first simplify $\eta_1$.

\vbls[-1]

\begin{align*}
    \eta_1 
    = v_{12,3}u_{12,3}^{-1}\mu_2 - v_{2,3}u_{2,3}^{-1}\mu_2 
    &= v_{2,3}(s_i u_{2,3}^{-1} \mu_2 - u_{2,3}^{-1} \mu_2) \\
    &= v_{2,3}(u_{2,3}^{-1}\mu_2 - \langle u_{2,3}^{-1}\mu_2, \tilde{\alpha}_i \rangle \tilde{\alpha}_i^{\vee} - u_{2,3}^{-1}\mu_2) \\
    &= -\langle \mu_2, u_{2,3}\tilde{\alpha}_i\rangle v_{2,3}(\tilde{\alpha}_i^{\vee}). \numberthis\label{eqn:eta1_value}
\end{align*}

And so we seek to compute the value of $\langle \mu_2, u_{2,3}\tilde{\alpha}_i \rangle$ in each sub-case. We do this using the classification of length positive elements to explicitly find the possible values of $\mu_2$ and $u_{2,3}$, showing that the bilinear product takes the values 0, 1, or -1 in the relevant cases labelled above.

\vbls

In all sub-cases, we have $|\LP(x_2)| = 2$. Write $\mu_2 = k\alpha^{\vee} + m\delta + l\Lambda_0$, and fix $t \in \mathbb{Z}$ such that $-\frac{l}{2} < j = k - tl \leq \frac{l}{2}$. From the proof of Corollary \ref{cor:LPx-size}, we must have that $j \in \{0, \frac{l}{2}, \pm\frac{l-1}{2}\}$. By Prop. \ref{prop:LP-algorithm}, the possible values of pairs $(j, u)$ for $u \in \LP(x_2) = \{u_{2,3}, v_{1,2}\}$ are:

\begin{alignat*}{4}
    &(0, \tau^{t\alpha^\vee}), &&(\frac{l}{2}, \tau^{t\alpha^\vee}), &&(\frac{l-1}{2}, \tau^{t\alpha^\vee}), &&(-\frac{l-1}{2}, \tau^{(t-1)\alpha^\vee}), \\
    &(0, \tau^{t\alpha^\vee}s_1),\ &&(\frac{l}{2}, \tau^{t\alpha^\vee}s_0),\ &&(\frac{l-1}{2}, \tau^{t\alpha^\vee}s_0),\ &&(-\frac{l-1}{2}, \tau^{(t-1)\alpha^\vee}s_0).
\end{alignat*} 


Write $u_{2,3} = v_0\tau^{t'\alpha^\vee}$. Taking $i = 0$ and $i=1$ respectively, we find:

\vbls[-1]

\begin{align*}
    \langle \mu_2, u_{2,3}\tilde{\alpha}_1 \rangle 
    &= \langle k\alpha^{\vee} + m\delta + l\Lambda_0, v_0\tau^{t'\alpha^\vee}(\alpha) \rangle \\
    &= \langle k\alpha^{\vee} + m\delta + l\Lambda_0, v_0\alpha + 2t'\delta \rangle\ 
    = 2(k\sgn(v_0) - lt'). \\[0.5\baselineskip]
    \langle \mu_2, u_{2,3}\tilde{\alpha}_0 \rangle 
    &= \langle k\alpha^{\vee} + m\delta + l\Lambda_0, v_0\tau^{t'\alpha^\vee}(-\alpha + \delta) \rangle \\
    &= \langle k\alpha^{\vee} + m\delta + l\Lambda_0, -v_0\alpha + (2t'+1)\delta \rangle\ 
    = -2(k\sgn(v_0) - lt') + l.
\end{align*}

Before investigating the sub-cases, we calculate these products for each possible pair $(j,u)$, and then investigate which pairs are valid for which sub-cases. It is clear from the relevant pairs which value of $i$ is required, as $j$ takes the same value for a fixed $x \in \WTits$, and hence is the same for $u_{2,3}, v_{1,2}$, where $u_{2,3} = v_{1,2}s_i$. We label the pairs in the form $(j,u_0)$, where $u_0$ is the finite Weyl component of $u$. 

\vbls

\begin{center}
\def\arraystretch{1.2}
\begin{tabular}{ c | c c c c c c c c }
    $(j,u_0)$ & $(0, e)$ & $(0, s_1)$ & $(\frac{l}{2}, e)$ & $(\frac{l}{2}, s_1)$ & $(\frac{l-1}{2}, e)$ & $(\frac{l-1}{2}, s_1)$ & $(-\frac{l-1}{2}, e)$ & $(-\frac{l-1}{2}, s_1)$ \\
    \hline 
    $\langle \mu_2, u_{2,3}\tilde{\alpha}_i\rangle$ & 0 & 0 & 0 & 0 & 1 & -1 & -1 & 1  
\end{tabular}
\end{center}

\vbls[0.5]

For the following, we write $w_2 = w_0\tau^{r\alpha^{\vee}}$, where $w_0 \in \Wfin$ and $r \in \mathbb{Z}$. 

\vbls

\textbf{Case (a)} $v_{1,2} < u_{2,3}$ and $w_2 v_{1,2} < w_2 u_{2,3}$. If $j = \frac{l-1}{2}$, then $u_{2,3}, v_{1,2} \in \{\tau^{t\alpha^{\vee}}, s_1\tau^{-(t+1)\alpha^{\vee}}\}$. Note that from Prop. \ref{prop:LP-algorithm}, the latter of these being length positive requires $-r \leq t \leq -1$, forcing $r + t \geq 0$. If $u_{2,3} = s_1\tau^{-(t+1)\alpha^{\vee}}$, then for $u_{2,3} > v_{1,2}$ we must have $t \geq 0$, a contradiction. If instead $u_{2,3} = \tau^{t\alpha^{\vee}}$, then $w_2u_{2,3} = w_0\tau^{r\alpha^{\vee}} \tau^{t\alpha^{\vee}} = w_0 \tau^{(r+t)\alpha^{\vee}} = w_0(s_0s_1)^{r+t}$. However, $w_2v_{1,2} = w_2u_{2,3}s_0 = w_0(s_0s_1)^{r+t}s_0 > w_2u_{2,3}$ as $r+t \geq 0$, contradicting the case. If $j = -\frac{l-1}{2}$, an analogous argument leads to the same contradictions. Hence $j = 0$ or $j = \frac{l}{2}$, and in either case $\langle \mu_2, u_{2,3}\tilde{\alpha}_i \rangle = 0$ as required.

\vbls

\textbf{Case (b)} $v_{1,2} > u_{2,3}$ and $w_2 v_{1,2} > w_2 u_{2,3}$. We mirror the argument in case (a): If $j = \frac{l-1}{2}$, then as before we require $t \leq -1$ and $r+t \geq 0$. If $u_{2,3} = \tau^{t\alpha^{\vee}}$, then for $u_{2,3} < v_{1,2}$ we require $t \geq 0$, a contradiction. If instead $u_{2,3} = s_1\tau^{-(t+1)\alpha^{\vee}}$, then $w_2 u_{2,3} = w_0 s_1 \tau^{-1-r-t} = w_0s_0 (s_1s_0)^{r+t}$, and $w_2v_{1,2} = w_0s_0(s_1s_0)^{r+t}s_0 < w_2u_{2,3}$ since $r+t \geq 0$, another contradiction. Repeating the argument with $j = -\frac{l-1}{2}$ gives the same result, and so as before we must have $j = 0$ or $j = \frac{l}{2}$, and so $\langle \mu_2, u_{2,3}\tilde{\alpha}_i \rangle = 0$.

\vbls

\textbf{Case (c)} $v_{1,2} < u_{2,3}$ and $w_2 v_{1,2} > w_2 u_{2,3}$. 

If $j = 0$ then $u_{2,3}, v_{1,2}$ take the values $\tau^{t\alpha^{\vee}}, s_1\tau^{-t\alpha^{\vee}}$. If $u_{2,3} = \tau^{t\alpha^{\vee}}$, then we must have $t \geq 1$ to ensure $v_{1,2} < u_{2,3}$. Hence by Prop. \ref{prop:LP-algorithm}, we must have $t \geq \Phi^+(w_0\alpha) - r$. However, $w_2 v_{1,2} > w_2 u_{2,3} \implies w_0s_1\tau^{-(t+r)\alpha^{\vee}} > w_0\tau^{(t+r)\alpha^{\vee}} \implies t + r \leq \Phi^+(w_0\alpha) - 1$, a contradiction. Similarly, if $u_{2,3} = s_1\tau^{-t\alpha^{\vee}}$, we require $t \leq 0$ for $v_{1,2} < u_{2,3}$ and $t \leq \Phi^+(w_0\alpha) - r - 1$ for length positivity. However, $w_2 v_{1,2} > w_2u_{2,3} \implies t+r \geq \Phi^+(w_0\alpha)$, a contradiction, and so $j \neq 0$. If $j = \frac{l}{2}$, an analogous argument leads to the same contradictions, and so $j = \pm\frac{l-1}{2}$.

If $j = \frac{l-1}{2}$ and $u_{2,3} = \tau^{t\alpha^{\vee}}s_0$, then we require $t \geq 0$ to ensure $u_{2,3} > v_{1,2}$, and we require $-r \leq t \leq -1$ for length positivity, a contradiction. If $j = -\frac{l-1}{2}$ and $u_{2,3} = \tau^{(t-1)\alpha^{\vee}}$, then we require $t \leq 0$ to ensure $u_{2,3} > v_{1,2}$, and $1 \leq t \leq -r$ for length positivity, another contradiction.

In all remaining possibilities, $\langle \mu_2, u_{2,3}\tilde{\alpha}_i \rangle = 1$, and so $\eta_1 = -v_{2,3}(\tilde{\alpha}^{\vee}_i)$ by equation \ref{eqn:eta1_value}, as required.

\vbls

\textbf{Case (d)} $v_{1,2} > u_{2,3}$ and $w_2 v_{1,2} < w_2 u_{2,3}$. We mirror the argument in case (c):

If $j = 0$, then $u_{2,3}, v_{1,2}$ take the values $\tau^{t\alpha^{\vee}}, s_1\tau^{-t\alpha^{\vee}}$. If $u_{2,3} = \tau^{t\alpha^{\vee}}$, then we need $t \leq 0$ to ensure $v_{1,2} > u_{2,3}$, and therefore we need $t \leq \Phi^+(w_0\alpha) - r - 1$ for length positivity. For $w_2v_{1,2} < w_2u_{2,3}$, we need $t+r \geq \Phi^+(w_0\alpha)$, a contradiction. If instead $u_{2,3} = s_1\tau^{-t\alpha^{\vee}}$, then we require $t \geq 1$ and $t \geq \Phi^+(w_0\alpha) - r$, but for $w_2v_{1,2} < w_2u_{2,3}$, we need $t + r \leq \Phi^+(w_0\alpha) - 1$, a contradiction. The same occurs if $j = \frac{l}{2}$, and so $j = \pm\frac{l-1}{2}$.

If $j = \frac{l-1}{2}$ and $u_{2,3} = \tau^{t\alpha^{\vee}}$, then we require $t \leq -1$ and $1 \leq t \leq -r$, a contradiction. If $j = -\frac{l-1}{2}$ and $u_{2,3} = \tau^{(t-1)\alpha^{\vee}}s_0$, then we require $t \geq 1$ and $-r \leq t \leq -1$, another contradiction. 

Similarly to the previous case, in all remaining possibilities we must have $\langle \mu_2, u_{2,3}\tilde{\alpha}_i \rangle = -1$, and so $\eta_1 = v_{2,3}(\tilde{\alpha}^{\vee}_i)$ as required. 

This completes all possible cases, and so $\eta_1 = \eta_2 + \eta_3$. Hence, we have shown that the lattice component of the generalised Demazure product is associative, and so we have proven the following.

\begin{thm}\label{thm:assoc}
    The generalised Demazure product for $\WTits$ of type $\widehat{SL}_2$ is associative for elements with level greater than one.
\end{thm}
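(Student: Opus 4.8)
The plan is to exploit the uniqueness of the decomposition $x = w\varepsilon^\mu$ with $w \in W$ and $\mu \in \Tits$, which splits the desired identity $(x_1 * x_2) * x_3 = x_1 * (x_2 * x_3)$ into two independent statements: equality of the Weyl components and equality of the lattice components of the two triple products. Because the generalised Demazure product is well-defined (Thm.\ \ref{thm:well-defined}), neither triple product depends on the choices of distance-minimising pairs, so it suffices to establish both equalities after fixing one convenient system of choices. The restriction $\lev(x_i) > 1$ is what makes this tractable: it forces $|\LP(x_i)| \leq 2$ (Cor.\ \ref{cor:LPx-size}), keeping each distance-minimising set small. The first move is to align the intermediate pairs by taking $v_{1,2} = u_{12,3}$ and $v_{2,3} = v_{1,23}$; this is permissible since $u_{12,3} \in \LP(x_{12})$ occurs as the second coordinate of some pair in $M_{1,2}$, and symmetrically for $v_{1,23}$.

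For the Weyl component I would reduce the identity to showing that $g := u_{1,23}^{-1} u_{1,2} v_{1,2}^{-1} u_{12,3} v_{12,3}^{-1} v_{1,23}$ equals $e$. The aligned choices collapse $g$ to $u_{1,23}^{-1} u_{1,2} v_{12,3}^{-1} v_{1,23}$, after which the argument bifurcates on whether $u_{12,3} = u_{2,3}$. When they agree, uniqueness of distance-minimisers (Thm.\ \ref{thm:vertical-edge}) forces $v_{12,3} = v_{1,23}$ and $u_{1,23} = u_{1,2}$, so $g = e$ at once. When they differ, $\LP(x_2)$ is a vertical pair (Cor.\ \ref{cor:LPx-connected}); I would write $u_{12,3} = u_{2,3}s_i$ and observe that distance-minimality permits only the configuration of Fig.\ \ref{fig:qbg-portion}, from which $u_{1,2} = u_{1,23}s_i$ and $v_{12,3} = v_{2,3}s_i$, giving the cancellation $g = s_i s_i = e$.

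The main obstacle is the lattice component, which I would recast as the single identity $\eta_1 = \eta_2 + \eta_3$ for the three weight-differences defined above. The values of $\eta_2$ and $\eta_3$ can be read directly off $\QBG(W)$ by distinguishing diagonal from vertical edges: each vanishes outside a single configuration in which a diagonal edge appears, where it contributes $\mp v_{2,3}(\tilde{\alpha}_i^\vee)$. The delicate term is $\eta_1 = -\langle \mu_2, u_{2,3}\tilde{\alpha}_i \rangle\, v_{2,3}(\tilde{\alpha}_i^\vee)$, and the crux is proving that the pairing $\langle \mu_2, u_{2,3}\tilde{\alpha}_i \rangle$ takes exactly the value $0$, $+1$, or $-1$ demanded in each sub-case. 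I expect this to require the full length-positivity classification (Prop.\ \ref{prop:LP-algorithm}): writing $\mu_2 = k\alpha^\vee + m\delta + l\Lambda_0$ and $j = k - tl \in \{0, \tfrac{l}{2}, \pm\tfrac{l-1}{2}\}$, one tabulates $\langle \mu_2, u_{2,3}\tilde{\alpha}_i \rangle$ for each admissible pair $(j, u_{2,3})$ and then eliminates, in each of the sub-cases (a)--(d), the values of $j$ incompatible with both length-positivity and the defining Bruhat inequalities $v_{1,2} \lessgtr u_{2,3}$ and $w_2 v_{1,2} \lessgtr w_2 u_{2,3}$. Carrying this out across all sub-cases yields $\eta_1 = \eta_2 + \eta_3$; combined with the Weyl identity, the two triple products coincide for our chosen pairs, and well-definedness then upgrades this to associativity in general.
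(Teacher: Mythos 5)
Your proposal follows essentially the same route as the paper's proof: the same split into Weyl and lattice components via uniqueness of the decomposition $x = w\varepsilon^\mu$, the same alignment of choices $v_{1,2} = u_{12,3}$ and $v_{2,3} = v_{1,23}$, the same reduction to $g = e$ with the bifurcation on $u_{12,3} = u_{2,3}$, and the same identity $\eta_1 = \eta_2 + \eta_3$ verified by tabulating $\langle \mu_2, u_{2,3}\tilde{\alpha}_i \rangle$ over the admissible pairs $(j, u_0)$ from the length-positivity classification. The only minor compression is in the Weyl component when $u_{12,3} \neq u_{2,3}$, where the paper additionally treats the subcase $u_{1,23} = u_{1,2}$ (forcing $|M_{2,3}| = 1$ and $v_{2,3} = v_{12,3}$) alongside the Fig.\ \ref{fig:qbg-portion} configuration, but this does not change the argument's substance.
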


\begin{rmk}
    In the case of $\lev(x) = 1$, the above arguments fail when $|\LP(x)| = 3$, for which many more cases appear from the distance minimality conditions. In this edge case, we have failed to find any non-associative examples, and we conjecture that associativity still holds, but the calculations become increasingly arduous. However, an analogous argument does hold for those level one elements whose length positive set is at most two elements. 
\end{rmk}
\vbls
\section{Length Additivity in General Type}
\subsection{Theorem Statement and Propositions.}

The goal of this section is to investigate and ultimately prove the following theorem.

\begin{thm}\label{bigone}
    Let $x, y \in W_{\mathcal{T}}$, and assume that $*_{u,v}^p = *$ is well-defined. Then $\ell(x * y) = \ell(x) + \ell(y) \iff \ell(xy) = \ell(x) + \ell(y)$. In this case, $x*y = xy$.
\end{thm}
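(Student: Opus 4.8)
The plan is to funnel both equalities through a single geometric condition on the quantum Bruhat graph, namely
\[ (\mathrm{C})\colon\quad \exists\,(u,v)\in\LP(x)\times\LP(y)\ \text{with}\ u = w_y v, \]
equivalently $\LP(x)\cap w_y\LP(y)\neq\emptyset$, and then to read off $x*y=xy$ by a one-line substitution. First I would check that $(\mathrm{C})$ forces the conclusion directly: if $u = w_y v$ then $uv^{-1}=w_y$, $vu^{-1}=w_y^{-1}$, and the shortest path $p\colon u\Rightarrow w_y v$ is trivial with $\wt(p)=0$, so Definition~\ref{def:gen-dem-prod} collapses to
\[ x*y = w_x w_y\,\varepsilon^{w_y^{-1}\mu_x+\mu_y} = xy. \]
Moreover such a pair has $d(u\Rightarrow w_y v)=0$, hence is automatically distance-minimising, so $(u,v)\in M_{x,y}$ and the computation is legitimate.

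Next I would invoke Theorem~\ref{demlen}, which writes $\ell(x*y)=\ell(x)+\ell(y)-\ell(p)$ for the distance-minimising path $p$. Since $\ell(p)=d(u\Rightarrow w_y v)\ge 0$, we get $\ell(x*y)\le\ell(x)+\ell(y)$, with equality precisely when $\ell(p)=0$, i.e. when $u=w_y v$ for the minimising pair, which is exactly condition $(\mathrm{C})$. Thus $\ell(x*y)=\ell(x)+\ell(y)\iff(\mathrm{C})$, and combined with the previous paragraph this already delivers the forward implication $\ell(x*y)=\ell(x)+\ell(y)\Rightarrow\ell(xy)=\ell(x)+\ell(y)$ together with the identity $x*y=xy$, since then $(\mathrm{C})$ holds and $x*y=xy$.

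The remaining and genuinely harder implication is $\ell(xy)=\ell(x)+\ell(y)\Rightarrow(\mathrm{C})$. Here I would begin from the cocycle identity for the length functional, which a direct expansion of its definition yields for every root $\alpha$ as
\[ \ell(xy,\alpha)=\ell(x,w_y\alpha)+\ell(y,\alpha), \]
the indicator terms $\pm\Phi^+(w_y\alpha)$ cancelling. Length additivity should then be shown equivalent to the sign-compatibility that $\ell(x,w_y\alpha)$ and $\ell(y,\alpha)$ never have strictly opposite signs, and from such a compatible pattern one builds the required $v\in\LP(y)$ with $w_y v\in\LP(x)$ by selecting, over each pair $\{\alpha,-\alpha\}$, the representative on which both functionals are non-negative. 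The main obstacle is twofold. First, this selection must be realised by an honest Weyl-group element, so that the chosen root set is genuinely of the form $v\Phi^+$; this forces one to adjust a given length-positive $v$ on the locus where $\ell(y,\cdot)=0$ by reflecting in zero-functional simple roots, precisely the flexibility supplied by Proposition~\ref{prop:lp-iff-zero}. Second, the Muthiah--Orr length is not the naive sum $\sum_{\alpha>0}|\ell(\cdot,\alpha)|$ (which diverges at positive level), so the step from the cocycle identity to actual length additivity must be justified through the genuine length theory rather than term by term. Once $(\mathrm{C})$ is secured, the first paragraph closes the argument.
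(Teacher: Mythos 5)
Your first two paragraphs are sound and essentially reproduce the paper's machinery: the observation that a pair with $u=w_yv$ lies in $M_{x,y}$ and collapses Definition \ref{def:gen-dem-prod} to $x*y=w_xw_y\varepsilon^{w_y^{-1}\mu_x+\mu_y}=xy$, combined with Theorem \ref{demlen}, is exactly Corollary \ref{demlacon}, and your deduction of the forward implication together with the final clause $x*y=xy$ is clean --- in fact slightly slicker than the paper, which establishes that implication by constructing inversions.

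The genuine gap is the implication $\ell(xy)=\ell(x)+\ell(y)\Rightarrow(\mathrm{C})$, which you correctly isolate as the hard part but do not prove. Two concrete problems. First, the cocycle identity $\ell(xy,\alpha)=\ell(x,w_y\alpha)+\ell(y,\alpha)$ sees only the affine roots $\Phi$, whereas length additivity in $\WTits$ is governed by \emph{double affine} inversions: the paper's essential input is Proposition \ref{mutpus}, $\ell(xy)=\ell(x)+\ell(y)-2|\Inv(x)\cap\Inv(y^{-1})|$, where inversions range over roots $\sgn(m)(\tilde{\beta}+m\pi)$, and translating between a common inversion at level $m$ and a sign violation of the two functionals at the affine root $\tilde{\beta}$ requires the explicit inequalities the paper writes out (in particular the choice $m=-\Phi^+(w_yv\tilde{\beta})$ in the converse construction). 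You flag this obstacle but offer no substitute for Proposition \ref{mutpus}; without some such formula there is no bridge from $\ell(xy)=\ell(x)+\ell(y)$ to any statement about $\ell(x,\cdot)$ and $\ell(y,\cdot)$. Second, even granting sign-compatibility, selecting one representative from each pair $\{\alpha,-\alpha\}$ does not by itself produce an element of $W$: in an infinite affine Weyl group such a selection is of the form $v\Phi^+$ only under a biconvexity-and-finiteness condition, and Proposition \ref{prop:lp-iff-zero} only licenses toggling across \emph{simple} roots with vanishing functional, which is far from realising an arbitrary compatible selection. The paper sidesteps both issues by arguing the contrapositive: from $w_y^{-1}\LP(x)\cap\LP(y)=\emptyset$ it picks $v\in\LP(y)$, finds $\tilde{\beta}$ with $\ell(x,w_yv\tilde{\beta})<0$, adjusts $v$ so that additionally $\ell(y,v\tilde{\beta})\geq 1$, and manufactures an explicit element of $\Inv(x)\cap\Inv(y^{-1})$. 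You would need an argument of comparable concreteness for your direct construction to close.
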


\vbls[0.5]

We achieve this by proving an extension of work by Schremmer to the double affine case.

\vbls[0.5]

\begin{thm}\label{demlen}
    Assume that $*_{u,v}^p = *$ is well-defined, and let $(u,v) \in M_{x,y}$. Then:
    \[ \ell(x * y) = \ell(x) + \ell(y) - d(u \Rightarrow w_y v). \]
\end{thm}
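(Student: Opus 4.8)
The plan is to read off $\ell(x*y)$ from a length-positive direction together with the explicit shape of the product. Write $z = x*y$ and fix $(u,v) \in M_{x,y}$. I would take $v$ as a length-positive direction for $z$: indeed $v \in M_{x,y}^y = \LP(x*y)$, which is $\widehat{SL}_2$ is the content of Theorem \ref{thm:LP-of-demprod} (and, more generally, is the length-positivity of the distance-minimising direction underlying Schremmer's formula). Substituting the defining form of $x*y$ into the length functional and setting $r = v$, exactly as at the start of the proof of Theorem \ref{thm:LP-of-demprod}, yields for every $\tilde\alpha \in \Phi^+$ the identity
$$ \ell(x*y, v\tilde\alpha) = \ell(y, v\tilde\alpha) + \ell(x, u\tilde\alpha) - \langle \tilde\alpha, \wt(u \Rightarrow w_y v)\rangle + \Phi^+(w_y v\tilde\alpha) - \Phi^+(u\tilde\alpha). $$
This identity is purely formal and holds in any type; it is the engine of the argument.

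I would then use the description of the length on $\WTits$ through the length functional: for $\xi \in \WTits$ and any $r \in \LP(\xi)$ one has $\ell(\xi) = \sum_{\tilde\alpha \in \Phi^+}\ell(\xi, r\tilde\alpha)$, read in the regularised sense appropriate to the double affine setting. Applying this to $z$ in direction $v$, to $x$ in direction $u \in \LP(x)$, and to $y$ in direction $v \in \LP(y)$, and summing the identity over $\tilde\alpha \in \Phi^+$, the first two terms reassemble into $\ell(x) + \ell(y)$, so that
$$ \ell(x*y) - \ell(x) - \ell(y) = \sum_{\tilde\alpha \in \Phi^+}\Bigl(\Phi^+(w_y v\tilde\alpha) - \Phi^+(u\tilde\alpha) - \langle \tilde\alpha, \wt(u \Rightarrow w_y v)\rangle\Bigr), $$
and everything reduces to identifying this residual with $-d(u \Rightarrow w_y v)$.

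The residual splits into an indicator part and a weight part. The indicator part telescopes: since $\sum_{\tilde\alpha > 0}\Phi^+(g\tilde\alpha) = |\Phi^+| - \ell(g)$ counts non-inversions, the divergent $|\Phi^+|$ contributions cancel and $\sum_{\tilde\alpha > 0}(\Phi^+(w_y v\tilde\alpha) - \Phi^+(u\tilde\alpha)) = \ell(u) - \ell(w_y v)$. The weight part is $\sum_{\tilde\alpha > 0}\langle \tilde\alpha, \wt(p)\rangle = \langle 2\rho, \wt(p)\rangle$, using $\langle 2\rho, \beta^\vee\rangle = 2\height(\beta)$ to give meaning to the pairing. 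Finally I would feed in the quantum Bruhat graph length relation: along any path $p : u \Rightarrow w_y v$ each Bruhat edge contributes $+1$ to $\ell$ with weight $0$, while each quantum edge $w \to ws_{\tilde\beta}$ contributes $1 - \langle 2\rho, \tilde\beta^\vee\rangle$ to $\ell$ with weight $\tilde\beta^\vee$, so a shortest path satisfies $d(u \Rightarrow w_y v) = \ell(w_y v) - \ell(u) + \langle 2\rho, \wt(p)\rangle$. Substituting the two evaluated parts gives residual $= (\ell(u) - \ell(w_y v)) - \langle 2\rho, \wt(p)\rangle = -d(u \Rightarrow w_y v)$, which is the claim.

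The main obstacle is the rigour of the infinite sums. In Schremmer's finite setting every sum above is finite; in the double affine setting $\Phi^+$ is infinite and both $\sum \Phi^+(u\tilde\alpha)$ and $\sum \langle \tilde\alpha, \wt(p)\rangle$ diverge individually. The real content is therefore to group the terms so that these divergences cancel --- pairing each $\tilde\alpha$ against its reflection for the indicator sum and reading the weight sum through $\langle 2\rho, \cdot\rangle = 2\height(\cdot)$ --- and to verify that this grouping is exactly the one encoded in the Muthiah--Orr length function, so that $\ell(x*y) - \ell(x) - \ell(y)$ really is the finite quantity computed above. Independence of the answer from the chosen pair $(u,v) \in M_{x,y}$ and the shortest path $p$ is then immediate from the standing well-definedness hypothesis together with the constancy of $\wt(u \Rightarrow w_y v)$ supplied by Theorem \ref{thm:same-weight}.
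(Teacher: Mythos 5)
Your overall strategy is the same as the paper's: both arguments reduce the claim to (a) a formula expressing $\ell(\xi)$ for $\xi \in \WTits$ in terms of the length functional evaluated in a length-positive direction, and (b) the quantum Bruhat graph relation $\langle \wt(p), 2\rho\rangle = d(u \Rightarrow w_y v) + \ell(u) - \ell(w_y v)$, together with the observation that $v \in \LP(x*y)$ and $w_{x*y}v = w_x u$. Your ingredient (b) and the formal identity for $\ell(x*y, v\tilde\alpha)$ are both correct and match the paper.

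The gap is in ingredient (a), and it is exactly where you say the ``main obstacle'' lies --- but flagging it is not the same as closing it. The identity $\ell(\xi) = \sum_{\tilde\alpha \in \Phi^+}\ell(\xi, r\tilde\alpha)$ for $r \in \LP(\xi)$ is Schremmer's, and it is a genuinely finite sum only when $W$ is finite. Here $W$ is affine, $\Phi^+$ is infinite, and for $\lev(\xi) = l > 0$ the summand $\ell(\xi, r\tilde\alpha)$ contains $\langle \mu_\xi, r\tilde\alpha\rangle$, which by Equation \ref{expansion:bracket} grows like $|n|\,l$ in the imaginary degree $n$ of $\tilde\alpha$; the sum diverges to $+\infty$ and no regularisation scheme is specified that provably recovers the Muthiah--Orr length. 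Likewise $\sum_{\tilde\alpha > 0}\langle\tilde\alpha, \wt(p)\rangle = \langle 2\rho, \wt(p)\rangle$ rests on the identity $\sum_{\tilde\alpha>0}\tilde\alpha = 2\rho$, which fails in a Kac--Moody root system ($\rho$ is defined as a sum of fundamental weights precisely because the half-sum of positive roots does not converge), and $\sum_{\tilde\alpha>0}\Phi^+(g\tilde\alpha) = |\Phi^+| - \ell(g)$ involves subtracting two infinite quantities. The differences you need do turn out to be finite, but asserting that the divergences ``cancel'' after regrouping, and that the regrouped answer coincides with the Muthiah--Orr length function, is the entire content of the step. The paper avoids this by proving the closed-form statement directly (Prop.~\ref{lplen}: $\ell(\xi) = \langle r^{-1}\mu_\xi, 2\rho\rangle - \ell(r) + \ell(w_\xi r)$ for $r \in \LP(\xi)$) from the inversion-set definition of $\ell$, where every set counted is finite. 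To complete your argument you would need to prove that closed form anyway, at which point the summation over $\Phi^+$ becomes unnecessary; I would recommend replacing the regularised-sum step with a direct proof of Prop.~\ref{lplen} and then substituting the explicit expression for $x*y$ into it.
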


\vbls[0.5]

To prove this, we need two extend two more propositions by Schremmer to the double-affine case. The first of these can be proved almost immediately by mimicking Schremmer's proof in the single-affine case.

It is important to note that, by assuming that $*$ is well-defined, we are also assuming that the weight function on $\QBG(W)$ is well-defined, and so the weight of two shortest paths is the same. This has yet to be proved for general type, although calculations suggest that this is reasonable to assume.

\vbls[0.5]

\begin{prop}
    Let $w_1, w_2 \in W$, and assume that the weight function for $\QBG(W)$ is well-defined. Then:
    \[ \langle \wt(w_1 \Rightarrow w_2), 2\rho \rangle = d(w_1 \Rightarrow w_2) + \ell(w_1) - \ell(w_2). \]
\end{prop}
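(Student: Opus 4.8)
The plan is to prove the identity one edge at a time and then telescope along a shortest path. The key observation is that every edge of $\QBG(W)$, whether Bruhat or quantum, contributes exactly $1$ to the sum of (its weight paired with $2\rho$) and (its length increment). Concretely, for an edge $w \to ws_\alpha$ with $\alpha \in \Phi^+$: if it is a Bruhat edge, its weight is $0$, so $\langle 0, 2\rho \rangle = 0$, while $\ell(ws_\alpha) - \ell(w) = 1$; if it is a quantum edge, its weight is $\alpha^\vee$, so $\langle \alpha^\vee, 2\rho \rangle = \langle 2\rho, \alpha^\vee \rangle$, while $\ell(ws_\alpha) - \ell(w) = 1 - \langle 2\rho, \alpha^\vee \rangle$. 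In both cases the two contributions sum to exactly $1$.

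Next I would fix a shortest path $p : w_1 = v_0 \to v_1 \to \cdots \to v_d = w_2$ with $d = d(w_1 \Rightarrow w_2)$ edges and sum the per-edge identity over all $d$ edges. Since $\wt(p)$ is the sum of the edge weights and the length increments telescope to $\ell(w_2) - \ell(w_1)$, this yields
\[ \langle \wt(p), 2\rho \rangle + \big(\ell(w_2) - \ell(w_1)\big) = d, \]
which rearranges to the claimed formula once I identify $\wt(p)$ with $\wt(w_1 \Rightarrow w_2)$, using the assumed well-definedness of the weight function on $\QBG(W)$.

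I would remark that the argument in fact establishes the stronger statement $\langle \wt(p), 2\rho \rangle = \ell(p) + \ell(w_1) - \ell(w_2)$ for \emph{any} path $p$ from $w_1$ to $w_2$, where $\ell(p)$ denotes its number of edges; specialising to a shortest path gives $\ell(p) = d$. A pleasant byproduct is that, since the right-hand side depends only on the endpoints and on $\ell(p)$, the quantity $\langle \wt(p), 2\rho \rangle$ is automatically the same for all shortest paths, so for this particular pairing the well-definedness hypothesis is not strictly necessary. There is no genuine obstacle here: the entire content is the per-edge verification, which is immediate from the definitions of Bruhat and quantum edges, and everything else is a telescoping sum.
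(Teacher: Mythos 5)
Your proof is correct and follows essentially the same route as the paper: verify the identity edge by edge (each edge contributes exactly $1$ to the sum of its weight paired with $2\rho$ and its length increment) and then telescope along a shortest path. Your additional remark that the identity holds for arbitrary paths, so that $\langle \wt(p), 2\rho \rangle$ is automatically constant across shortest paths without invoking the well-definedness hypothesis, is a correct and worthwhile strengthening, though the paper does not make it explicit.
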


\begin{proof}
    First, note that if $w_2 = w_1$, all factors are 0 and the claim holds. Now assume that $w_2 = w_1 s_{\tilde{\alpha}}$ such that $w_1 \rightarrow w_2$ is an edge in $\QBG(W)$, leading to one of two cases.
    \begin{enumerate}[label=(\roman*)]
        \item $w_1 \rightarrow w_1 s_{\tilde{\alpha}}$ is a Bruhat edge: Then $\ell(w_2) = \ell(w_1) + 1$ and $\wt(w_1 \Rightarrow w_2) = 0$.
        \item $w_1 \rightarrow w_1 s_{\tilde{\alpha}}$ is a quantum edge: Then $\ell(w_2) = \ell(w_1) + 1 - \langle \tilde{\alpha}^{\vee}, 2\rho \rangle$ and $\wt(w_1 \Rightarrow w_2) = \tilde{\alpha}^{\vee}$.
    \end{enumerate}
    In either case, we have that $\ell(w_1s_{\tilde{\alpha}}) = \ell(w_1) + 1 - \langle \wt(w_1 \Rightarrow w_1 s_{\tilde{\alpha}}), 2\rho \rangle$. Now consider a general-length path, where $w_2 = w_1 p$, and $p= s_{\tilde{\alpha}_1} \cdots s_{\tilde{\alpha}_n}$ corresponds to some minimal-distance path in $\QBG(W)$. We can iterate the above process to find the following.

    \vbls[-1]

    \begin{align*}
       \ell(w_2) &= \ell(w_1 s_{\tilde{\alpha}_1} \cdots s_{\tilde{\alpha}_n}) \\
        &= \ell(w_1 s_{\tilde{\alpha}_1} \cdots s_{\tilde{\alpha}_{n-1}}) + 1 - \langle \wt(w_1 s_{\tilde{\alpha}_1} \cdots s_{\tilde{\alpha}_{n-1}} \Rightarrow w_1 s_{\tilde{\alpha}_1} \cdots s_{\tilde{\alpha}_n}), 2\rho \rangle \\
        &= \cdots \\
        &= \ell(w_1) + n - \langle \wt(w_1 \Rightarrow w_1 s_{\tilde{\alpha}_1} \cdots s_{\tilde{\alpha}_n}), 2\rho \rangle \\
        &= \ell(w_1) + d(w_1 \Rightarrow w_2) - \langle \wt(w_1 \Rightarrow w_2), 2\rho \rangle.
    \end{align*}
    
    Rearranging then gives the desired result.
\end{proof}

The second ingredient we need to prove Thm. \ref{demlen} is the following:

\vbls

\begin{prop}\label{lplen}
    Let $x = w_x\varepsilon^{\mu_x} \in W_\mathcal{T}$, and $u \in \LP(x)$. Then:
    \[ \ell(x) = \langle u^{-1} \mu_x, 2\rho \rangle - \ell(u) + \ell(w_x u). \]
\end{prop}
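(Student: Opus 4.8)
The plan is to mimic Schremmer's single-affine argument. For $u \in \LP(x)$ the length functional $\ell(x, u\tilde\alpha)$ is non-negative on every positive root $\tilde\alpha$, so the absolute values in the inversion-count description of the Muthiah--Orr length collapse and $\ell(x)$ should be recoverable directly from the values $\ell(x, u\tilde\alpha)$. I would therefore start from the expansion
\[ \ell(x, u\tilde\alpha) = \langle u\tilde\alpha, \mu_x \rangle + \Phi^+(u\tilde\alpha) - \Phi^+(w_x u\tilde\alpha) \]
and separate the pairing term from the two indicator terms, aiming to show that the indicator terms contribute $\ell(w_x u) - \ell(u)$ while the pairing term contributes $\langle u^{-1}\mu_x, 2\rho\rangle$. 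The indicator part is the easy, genuinely finite half: $\Phi^+(u\tilde\alpha) - \Phi^+(w_x u\tilde\alpha)$ is nonzero only for the finitely many $\tilde\alpha$ on which $u\tilde\alpha$ and $w_x u\tilde\alpha$ have opposite signs, and a standard inversion-set count (using $\Inv(w_x)$, $\Inv(u^{-1})$, and $-v\Inv(v) = \Inv(v^{-1})$ as already invoked in Prop.~\ref{prop:boundary-lp}) collapses their total to $\ell(w_x u) - \ell(u)$.

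The genuine difficulty is the pairing term, and this is where I expect the main obstacle to lie. In Schremmer's finite-root setting one simply writes $\sum_{\tilde\alpha > 0}\langle u\tilde\alpha, \mu_x\rangle = \langle u^{-1}\mu_x, \sum_{\tilde\alpha>0}\tilde\alpha\rangle = \langle u^{-1}\mu_x, 2\rho\rangle$, but in the double-affine case $\Phi^+$ is infinite, $\sum_{\tilde\alpha>0}\tilde\alpha$ diverges, and it is in any case no longer equal to the affine $2\rho$ (the sum of fundamental weights). So the whole weight of the proof is to produce the affine $2\rho$ correctly, and this forces one to invoke the Muthiah--Orr length directly rather than a formal root sum: their regularised length encodes the translation contribution through a dominant representative of $\mu_x$ paired with $2\rho$. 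My plan is to reduce, via the independence result established next, to a choice of $u$ for which $u^{-1}\mu_x$ is dominant and aligned with $w_x u$, so that $\ell(x) = \langle u^{-1}\mu_x, 2\rho\rangle + \ell(w_x u)$ is exactly Muthiah--Orr's value for a (nearly) dominant translation, and then transport the identity across $\LP(x)$.

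The transport step is the clean part and I would do it first. Writing $f(u) := \langle u^{-1}\mu_x, 2\rho\rangle - \ell(u) + \ell(w_x u)$, I claim $f$ is constant on $\LP(x)$. By Cor.~\ref{cor:LPx-connected} (and its general-type analogue) $\LP(x)$ is connected by edges $u \to us_{\tilde\beta}$ with $\tilde\beta$ simple, and by Prop.~\ref{prop:lp-iff-zero} such an edge stays inside $\LP(x)$ precisely when $\ell(x, u\tilde\beta) = 0$. Using $s_{\tilde\beta}(2\rho) = 2\rho - 2\tilde\beta$, the $W$-invariance of the pairing, and the length changes $\ell(us_{\tilde\beta}) - \ell(u) = 2\Phi^+(u\tilde\beta) - 1$ and $\ell(w_x u s_{\tilde\beta}) - \ell(w_x u) = 2\Phi^+(w_x u\tilde\beta) - 1$, a short computation gives
\[ f(us_{\tilde\beta}) - f(u) = -2\bigl(\langle u\tilde\beta, \mu_x\rangle + \Phi^+(u\tilde\beta) - \Phi^+(w_x u\tilde\beta)\bigr) = -2\,\ell(x, u\tilde\beta) = 0. \]
Hence $f$ is constant on $\LP(x)$, so it suffices to verify $\ell(x) = f(u)$ for a single convenient $u$. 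I expect the anchoring to the Muthiah--Orr length — not this formal manipulation — to be the real crux, both because of the divergence of the naive root sum and because establishing the connectivity of $\LP(x)$ in full generality (beyond the $\widehat{SL}_2$ case handled in the excerpt) would itself require the general-type covering results rather than Cor.~\ref{cor:LPx-connected}.
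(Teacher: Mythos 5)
Your transport step is correct and is genuinely different in structure from the paper's argument: setting $f(u) = \langle u^{-1}\mu_x, 2\rho\rangle - \ell(u) + \ell(w_x u)$ and checking $f(us_{\tilde{\beta}}) - f(u) = -2\ell(x, u\tilde{\beta}) = 0$ across simple edges of $\LP(x)$ (via Prop.~\ref{prop:lp-iff-zero}) is a clean reduction, and you correctly identify that it rests on connectivity of $\LP(x)$ by simple edges, which the paper only establishes for $\widehat{SL}_2$ (Cor.~\ref{cor:LPx-connected}). But the reduction is not where the content of Prop.~\ref{lplen} lives, and the proposal never supplies the base case. You explicitly defer ``the anchoring to the Muthiah--Orr length'' as the crux, and the anchor you envision --- a $u \in \LP(x)$ with $u^{-1}\mu_x$ dominant, for which $\ell(x) = \langle u^{-1}\mu_x, 2\rho\rangle + \ell(w_x u)$ would be read off directly --- generally does not exist. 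Length positivity only forces $\langle u^{-1}\mu_x, \tilde{\alpha}\rangle \geq -1$ on positive roots, and there are elements $x$ (e.g.\ those falling into case (aiii) of Section 3) for which \emph{every} $u \in \LP(x)$ has some $\tilde{\alpha}$ with $\langle \mu_x, u\tilde{\alpha}\rangle = -1$; moreover your proposed base-case identity silently drops the $-\ell(u)$ term, which only vanishes when $e \in \LP(x)$, again not true in general. So constancy of $f$ on $\LP(x)$ cannot rescue you: there need be no ``convenient'' vertex at which the identity is free.

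The unavoidable computation, which is exactly what the paper's proof consists of, is the following. One starts from the Muthiah--Orr definition $\ell(x) = \ell(\varepsilon^{\mu_x}) + |\{\tilde{\alpha} \in \Inv(w_x) : \langle \mu_x, \tilde{\alpha}\rangle \geq 0\}| - |\{\tilde{\alpha} \in \Inv(w_x) : \langle \mu_x, \tilde{\alpha}\rangle < 0\}|$ together with the translation-length formula $\ell(\varepsilon^{\mu_x}) = 2\height(u^{-1}\mu_x) + 2|\{\tilde{\alpha} \in \Phi^+ : \langle \mu_x, u\tilde{\alpha}\rangle = -1\}|$ valid for any $u \in \LP(x)$; the second summand is precisely the correction for $u^{-1}\mu_x$ being only almost-dominant. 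Length positivity forces each root with $\langle \mu_x, u\tilde{\alpha}\rangle = -1$ to satisfy $u\tilde{\alpha} \in \Inv(w_x)$, which lets one convert that correction into a count over $\Inv(u^{-1}) \cap \Inv(w_x)$ and cancel it against the signed inversion count from the definition, landing on $\ell(x) = 2\height(u^{-1}\mu_x) + \ell(w_x) - 2|\Inv(u^{-1}) \cap \Inv(w_x)|$; Welch's identity $2|\Inv(u^{-1}) \cap \Inv(w_x)| = \ell(u) + \ell(w_x) - \ell(w_x u)$ then finishes. None of this bookkeeping appears in your proposal, and without it the statement is not proved; your divergent-sum discussion correctly diagnoses why the na\"ive Schremmer computation fails but does not replace it.
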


\begin{proof}

    Let $w = s_{i_n} \cdots s_{i_1} \varepsilon^{\mu_x}$, such that $\ell(w) = n$ and $s_i = s_{\tilde{\beta}_i}$ for some simple affine root $\tilde{\beta}_i$. By the definition of the length function, we have the following.

    \vbls[-1]

    \begin{align*}
        \ell(x) = \ell(\varepsilon^{\mu_x}) + |\{\tilde{\alpha} \in \Inv(w) : \langle \mu_x, \tilde{\alpha} \rangle \geq 0 \}| - |\{\tilde{\alpha} \in \Inv(w) : \langle \mu_x, \tilde{\alpha} \rangle < 0 \}|.
    \end{align*}

    Let $u \in \LP(x)$. Then, as before, we have that:

    \[ \ell(\varepsilon^{\mu_x}) = 2\height(u^{-1}\mu_x) + 2|\{ \tilde{\alpha} \in \Phi^+ : \langle \mu_x, u\tilde{\alpha} \rangle = -1\}|. \]

    \vbls

    Note that $\langle \mu_x, u\tilde{\alpha} \rangle = -1 \implies u\tilde{\alpha} \in \Inv(w)$, and $\langle \mu_x, u\tilde{\alpha} \rangle \geq -1$ by length positivity. Hence:

    \vbls[-1]

    \begin{align*}
        |\{\tilde{\alpha} \in \Phi^+ : \langle \mu_x, u\tilde{\alpha} \rangle = -1 \}|
        &= |\{\tilde{\alpha} \in \Phi^+ \cap u^{-1}\Inv(w) : \langle \mu_x, u\tilde{\alpha} \rangle < 0 \}| \\
        &= |\{\tilde{\alpha} \in u\Phi^+ \cap \Inv(w) : \langle \mu_x, \tilde{\alpha} \rangle < 0 \}| \\
        &= |\{ \tilde{\alpha} \in \Inv(w) : \langle \mu_x, \tilde{\alpha} \rangle < 0 \}| - |\{ \tilde{\alpha} \in -u\Phi^+ \cap \Inv(w) : \langle \mu_x, \tilde{\alpha} \rangle < 0 \}|
    \end{align*}

    \vbls[-1.5]

    \begin{align*}
        \implies \ell(x) = 2\height(u^{-1}\mu_x) &+ 2|\{\tilde{\alpha} \in \Inv(w) : \langle \mu_x, \tilde{\alpha} \rangle < 0 \}| - 2|\{ \tilde{\alpha} \in -u\Phi^+ \cap \Inv(w) : \langle \mu_x, \tilde{\alpha} \rangle < 0 \}|\\
        &+ |\{\tilde{\alpha} \in \Inv(w) : \langle \mu_x, \tilde{\alpha} \rangle \geq 0 \}| - |\{\tilde{\alpha} \in \Inv(w) : \langle \mu_x, \tilde{\alpha} \rangle < 0 \}| \\
        = 2\height(u^{-1}\mu_x) &+ |\{\tilde{\alpha} \in \Inv(w) : \langle \mu_x, \tilde{\alpha} \rangle \geq 0 \}| + |\{\tilde{\alpha} \in \Inv(w) : \langle \mu_x, \tilde{\alpha} \rangle < 0 \}| \\
        &-2|\{ \tilde{\alpha} \in -u\Phi^+ \cap \Inv(w) : \langle \mu_x, \tilde{\alpha} \rangle < 0 \}| \\
        = 2\height(u^{-1}\mu_x) &+ |\Inv(w)| -2|\{ \tilde{\alpha} \in -u\Phi^+ \cap \Inv(w) : \langle \mu_x, \tilde{\alpha} \rangle < 0 \}|. \\
    \end{align*}

    \vbls[-0.5]

    We have that $|\Inv(w)| = \ell(w)$. Note also the following:

    \vbls[-0.5]

    \begin{align*}
        \tilde{\alpha} \in -u\Phi^+
        &\iff \tilde{\alpha} = -u\tilde{\beta}\ \textrm{for some}\ \tilde{\beta} \in \Phi^+ \\
        &\iff u^{-1}\tilde{\alpha} = -\tilde{\beta}\ \textrm{for some}\ \tilde{\beta} \in \Phi^+\\
        &\iff u^{-1}\tilde{\alpha} < 0.
    \end{align*}

    However, $\tilde{\alpha} \in \Inv(w) \implies \tilde{\alpha} \in \Phi^+$. Hence, $\tilde{\alpha} \in \Inv(u^{-1})$, and so $-u^{-1}\tilde{\alpha} > 0$. Applying length positivity to this element, we get the following.

    \vbls[-0.5]

    \begin{align*}
        \ell(x, u(-u^{-1}\tilde{\alpha}))
        = \ell(x, -\tilde{\alpha})
        = -\langle \mu_x, \tilde{\alpha} \rangle + 0 - 1\ \geq 0
        \implies \langle \mu_x, \tilde{\alpha} \rangle  \leq -1.
    \end{align*}

    Hence the condition that $\langle \mu_x, \tilde{\alpha} \rangle < 0$ in the above set is redundant. Using Prop. 2.4.6 of \cite{Welch19}, we therefore conclude the following.

    \vbls[-0.5]

    \begin{align*}
        \ell(x) &= 2\height(u^{-1}\mu_x) + |\Inv(w)| -2|\{ \tilde{\alpha} \in -u\Phi^+ \cap \Inv(w) : \langle \mu_x, \tilde{\alpha} \rangle < 0 \}| \\
        &= 2\height(u^{-1}\mu_x) + \ell(w) -2|\Inv(u^{-1}) \cap \Inv(w)| \\
        &= 2\height(u^{-1}\mu_x) + \ell(w) + (\ell(wu) - \ell(w) - \ell(u)) \\
        &= 2\height(u^{-1}\mu_x) - \ell(u) + \ell(wu).
    \end{align*}

    As required. 
\end{proof}

\begin{proof}[Proof of theorem \textup{\ref{demlen}}]  
    \hspace{3mm} Let $(u,v) \in M_{x,y}$, noting that this implies $v \in \LP(x*y)$. Using the definition of the Demazure product (which we assume to be well-defined), and using the formulae found in the above claims, we perform the following calculation, mimicking Schremmer's proof.

    \vbls[-1]

    \begin{align*}
        \ell(x*y)
        &= \langle v^{-1}( vu^{-1} \mu_x + \mu_y - v\wt(u \Rightarrow w_y v)), 2\rho \rangle - \ell(v) + \ell(w_{x*y} v) \\
        &= \langle u^{-1} \mu_x, 2\rho \rangle + \langle v^{-1} \mu_y, 2\rho \rangle - \langle \wt(u \Rightarrow w_y v), 2\rho \rangle - \ell(v) + \ell(w_x u) \\
        &= \ell(x) + \ell(u) - \ell(w_x u) + \ell(y) + \ell(v) - \ell(w_y v) - d(u \Rightarrow w_y v) - \ell(u) + \ell(w_y v) - \ell(v) + \ell(w_x u) \\
        &= \ell(x) + \ell(y) - d(u \Rightarrow w_y v).
    \end{align*}

    \vbls[-1]
\end{proof}

 As a corollary, we have the following important claim for discussing length additivity.

\begin{cor}\label{demlacon}
    Let $x, y \in W_\mathcal{T}$, and assume that the product $*$ is well-defined. Then:
    \[\ell(x*y) = \ell(x) + \ell(y)\ \iff\ |\LP(y) \cap w_y^{-1}\LP(x)| > 0.\]
\end{cor}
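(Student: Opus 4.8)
The plan is to read this off directly from Theorem \ref{demlen}, which expresses the length defect of the Demazure product as a distance in $\QBG(W)$. Concretely, for any $(u,v) \in M_{x,y}$ we have $\ell(x*y) = \ell(x) + \ell(y) - d(u \Rightarrow w_y v)$, and since path lengths in $\QBG(W)$ are non-negative, the equality $\ell(x*y) = \ell(x) + \ell(y)$ holds if and only if $d(u \Rightarrow w_y v) = 0$ for a distance-minimising pair. So the whole corollary reduces to translating the condition ``the minimal distance is zero'' into the intersection statement.

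First I would record the elementary fact that a path of length $0$ in $\QBG(W)$ has coinciding endpoints, so $d(u \Rightarrow w_y v) = 0$ if and only if $u = w_y v$. Next, because a pair $(u,v) \in M_{x,y}$ realises the minimum of $d(u' \Rightarrow w_y v')$ over $(u',v') \in \LP(x) \times \LP(y)$, and because that minimum is non-negative, the minimum equals $0$ precisely when \emph{some} pair $(u',v') \in \LP(x) \times \LP(y)$ satisfies $u' = w_y v'$. (Here I would flag the one point needing care: that $M_{x,y}$ genuinely attains the minimal distance over all of $\LP(x) \times \LP(y)$, so that the $d$ appearing in Theorem \ref{demlen} is the global minimum and not merely a local one; this is immediate from the definition of the distance-minimising set.)

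Finally I would carry out the bookkeeping of the forward and reverse implications. If $d = 0$ for $(u,v) \in M_{x,y}$, then $u = w_y v$ with $u \in \LP(x)$ and $v \in \LP(y)$, so $w_y v \in \LP(x)$, i.e.\ $v \in w_y^{-1}\LP(x)$, giving $v \in \LP(y) \cap w_y^{-1}\LP(x)$ and hence $|\LP(y) \cap w_y^{-1}\LP(x)| > 0$. Conversely, given $v \in \LP(y) \cap w_y^{-1}\LP(x)$, the element $w_y v$ lies in $\LP(x)$, so $(w_y v, v) \in \LP(x) \times \LP(y)$ with $d(w_y v \Rightarrow w_y v) = 0$; as distances are non-negative this pair is distance-minimising, hence $(w_y v, v) \in M_{x,y}$ and the minimal distance is $0$. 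Feeding this back into Theorem \ref{demlen} yields $\ell(x*y) = \ell(x) + \ell(y)$, completing the equivalence.

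I do not expect any genuine obstacle here: the content is entirely in Theorem \ref{demlen}, and the remaining argument is the routine equivalence $d=0 \Leftrightarrow u = w_y v \Leftrightarrow \LP(x) \cap w_y\LP(y) \neq \emptyset$. The only thing worth stating cleanly is the reindexing $\LP(x) \cap w_y \LP(y) \neq \emptyset \iff \LP(y) \cap w_y^{-1}\LP(x) \neq \emptyset$, obtained by applying $w_y^{-1}$, which is what makes the two sides of the stated equivalence match.
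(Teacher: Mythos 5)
Your proposal is correct and is exactly the argument the paper intends: the corollary is stated as an immediate consequence of Theorem \ref{demlen}, and the equivalence $d(u \Rightarrow w_y v) = 0 \iff u = w_y v \iff \LP(y) \cap w_y^{-1}\LP(x) \neq \emptyset$ is the whole content. The paper gives no written proof, and your careful handling of the reindexing by $w_y^{-1}$ and of the fact that $M_{x,y}$ attains the global minimum over $\LP(x) \times \LP(y)$ fills that in correctly.
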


We also having the following result due to Muthiah and Pusk\'as. First, let our double affine roots be $\Phi_{\textup{daf}} = \{\widehat{\beta} = \tilde{\beta} + m\pi\ |\ \tilde{\beta} \in \Phi, m \in \mathbb{Z}\}$, with positive roots $\Phi_{\textup{daf}}^+ = \{ \sgn(m)(\tilde{\beta} + m\pi)\ |\ \tilde{\beta} \in \Phi^+, m \in \mathbb{Z} \}$. For $x \in \WTits$, we then define its inversion set to be $\Inv(x) := \{\hat{\beta} \in \Phi_{\textup{daf}}^+\ |\ x(\hat{\beta}) < 0 \}$, where $\WTits$ acts on $\Phi_{\textup{daf}}$ in the usual way, as an affinisation of equation \ref{eqn:w-action}. Note that these inversion sets are typically infinite, but their intersections are finite, as seen in the following.

\begin{prop}[\cite{Muthiah-Puskas24}, Thm. 5.4]\label{mutpus}
    Let $x, y \in W_{\mathcal{T}}$. Then:
    \[\ell(xy) = \ell(x) + \ell(y) - 2|\{\Inv(x) \cap \Inv(y^{-1})\}|.\]
\end{prop}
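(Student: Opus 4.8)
The plan is to reproduce, in the double affine setting, the classical Coxeter-theoretic length formula $\ell(uv) = \ell(u) + \ell(v) - 2|\Inv(u) \cap \Inv(v^{-1})|$, whose usual proof rests on the identity $\ell(w) = |\Inv(w)|$ together with a partition of the flipped roots. First I would record the purely combinatorial \emph{cocycle decomposition} of the inversion set of a product. For any $\hat{\beta} \in \Phi_{\textup{daf}}^+$, tracking the two signs of $y\hat{\beta}$ and of $xy\hat{\beta}$ partitions $\Inv(xy)$ as the disjoint union of $A = \{\hat{\beta} : y\hat{\beta} > 0,\ y\hat{\beta} \in \Inv(x)\}$ and $B = \{\hat{\beta} \in \Inv(y) : -y\hat{\beta} \notin \Inv(x)\}$. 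This step uses only the linear action of $\WTits$ on $\Phi_{\textup{daf}}$ (an affinisation of equation~\ref{eqn:w-action}) and the sign function, so it transfers verbatim from the finite case. The map $\hat{\beta} \mapsto y\hat{\beta}$ identifies $A$ with $\Inv(x) \setminus \Inv(y^{-1})$, and the standard bijection $\hat{\beta} \mapsto -y\hat{\beta}$ from $\Inv(y)$ to $\Inv(y^{-1})$ identifies the complement of $B$ inside $\Inv(y)$ with $\Inv(x) \cap \Inv(y^{-1})$. Formally this yields ``$|\Inv(xy)| = |\Inv(x)| + |\Inv(y)| - 2|\Inv(x)\cap\Inv(y^{-1})|$''.

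The main obstacle is that these inversion sets are infinite, so the cardinalities above are not literally meaningful and the counting argument cannot be applied directly; only the intersection $\Inv(x)\cap\Inv(y^{-1})$ is finite. To make the argument rigorous I would pass from raw cardinalities to the Muthiah--Orr length function, proving that the defect from additivity, $\ell(x)+\ell(y)-\ell(xy)$, is computed by the finite set $\Inv(x)\cap\Inv(y^{-1})$. I would do this by induction along a minimal expression for $y$, reducing to the case where $y$ is enlarged by a single simple reflection $s_i$ (or, in the translation directions, a single step governed by the length functional $\ell(\cdot,\tilde{\alpha})$). In the base step $\Inv(s_i)$ meets $\Phi_{\textup{daf}}^+$ in a single root, and both sides visibly agree with $\ell(xs_i) = \ell(x) \pm 1$ according to whether that root lies in $\Inv(x^{-1})$; the induction step then uses that inversion sets concatenate along reduced expressions, so the finite intersection count is additive in exactly the way required.

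The remaining technical content is the bookkeeping needed to handle the translation part of $y$ and the semigroup (rather than group) structure of $\WTits$: although $y$ need not be invertible inside $\WTits$, the set $\Inv(y^{-1}) = -y\,\Inv(y)$ remains well defined through the linear action, and the length functional together with equation~\ref{eqn:w-action} controls how $\ell$ changes when $y$ is enlarged by a dominant translation. I expect the genuinely delicate point to be verifying that the regularised length registers precisely twice the finite intersection as the failure of additivity, that is, that all the infinite cancellations implicit in the formal identity of the first paragraph are legitimate. This is exactly where the defining properties of the Muthiah--Orr length function, rather than any naive cardinality count, must be invoked.
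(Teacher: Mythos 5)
This proposition is imported verbatim from Muthiah--Pusk\'as (their Theorem 5.4); the present paper gives no proof of it, so there is nothing in the paper to compare your argument against. Judged on its own terms, your proposal correctly identifies both the heuristic (the classical cocycle decomposition of $\Inv(xy)$ into the part coming from $\Inv(x)\setminus\Inv(y^{-1})$ and the part coming from $\Inv(y)$, with the intersection $\Inv(x)\cap\Inv(y^{-1})$ counted with the wrong sign twice) and the genuine difficulty (the inversion sets are infinite, so the naive count is meaningless and only the finite intersection survives regularisation). That diagnosis is accurate.

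The gap is in the proposed repair. You suggest inducting ``along a minimal expression for $y$,'' reducing to the case of appending a single simple reflection. But $\WTits$ is not a Coxeter group: a general element $y = w_y\varepsilon^{\mu_y}$ has a translation part $\mu_y$ ranging over the Tits cone, it admits no reduced word in simple reflections, and the Muthiah--Orr length $\ell(y)$ is \emph{not} $|\Inv(y)|$ (which is infinite) but is defined by a closed formula of the shape appearing in Prop.~\ref{lplen}, namely $\ell(y) = \langle v^{-1}\mu_y, 2\rho\rangle - \ell(v) + \ell(w_y v)$ for $v\in\LP(y)$. Consequently the base case and the inductive step you describe (``inversion sets concatenate along reduced expressions,'' ``$\ell(xs_i)=\ell(x)\pm 1$ according to whether the unique new root lies in $\Inv(x^{-1})$'') are statements about Coxeter groups that do not transfer; for the translation directions you acknowledge the problem but offer no mechanism, and that is precisely where all the infinite cancellation happens. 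In effect the entire analytic content of the theorem --- that the defect $\ell(x)+\ell(y)-\ell(xy)$, computed from the closed-form length function, equals twice the finite intersection --- is deferred to ``the defining properties of the Muthiah--Orr length function'' without saying which properties or how they enter. A workable proof has to start from that closed formula (or an equivalent characterisation of $\ell$ via a regularised comparison of inversion sets) rather than from reduced words; this is what Muthiah--Pusk\'as do in the cited source, and it is not a detail that can be filled in by routine bookkeeping on top of your outline.
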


From this, it follows that the product is length additive when the corresponding intersection of inversion sets is empty. Using this, we now prove Thm. \ref{bigone}.

\subsection{Proof of Thm. \ref{bigone}}

``$\Rightarrow$": Assume $\ell(xy) \neq \ell(x) + \ell(y)$. Then by Prop. \ref{mutpus}, there must exist some positive double affine root $\widehat{\beta}[m] = \sgn(m)(\tilde{\beta} + m\pi) \in \Inv(x) \cap \Inv(y^{-1}) \subseteq \Phi_{\textrm{af}}^+$, where $\tilde{\beta} \in \Phi^+$. Hence, $x(\widehat{\beta}) < 0$ and $y^{-1}(\widehat{\beta}) < 0$. As usual, let $x = w_x \varepsilon^{\mu_x}, y = w_y \varepsilon^{\mu_y}$. Then, the following inequalities must hold.

\vbls[-0.5]

\begin{align*}
    |m| - \langle \mu_x, \sgn(m)\tilde{\beta} \rangle &\leq -\Phi^+(\sgn(m)w_x\tilde{\beta}), \\
    |m| + \langle \mu_y, \sgn(m)w_y^{-1}\tilde{\beta} \rangle &\leq -\Phi^+(\sgn(m)w_y^{-1}\tilde{\beta}).
\end{align*}

Now let $v \in w_y^{-1} \LP(x)$, so that $\ell(x, w_yv\tilde{\alpha}) \geq 0$ for every $\tilde{\alpha} \in \Phi^+$. Let $\tilde{\gamma} = \sgn(m)v^{-1}w_y^{-1}\tilde{\beta} \in \Phi$. Then:

\vbls[-1.5]

\begin{align*}
    \ell(x, w_y v \tilde{\gamma})
    &= \langle \mu_x, \sgn(m) \tilde{\beta} \rangle + \Phi^+(\sgn(m)\tilde{\beta}) - \Phi^+(\sgn(m)w_x\tilde{\beta}) \\
    &\geq |m| + \Phi^+(\sgn(m)w_x\tilde{\beta}) + \Phi^+(\sgn(m)\tilde{\beta}) - \Phi^+(\sgn(m)w_x\tilde{\beta}) \\
    &= |m| + \Phi^+(\sgn(m)\tilde{\beta})\ \geq\ 1.
\end{align*}

\vbls[0.5]

Hence $\tilde{\gamma} \in \Phi^+$, otherwise $\ell(x, w_y v \tilde{\alpha}) \leq -1$ for some positive root $\tilde{\alpha} = -\tilde{\gamma}$. But then:

\vbls[-0.5]

\begin{align*}
    \ell(y, v\tilde{\gamma})
    &= \langle \mu_y, \sgn(m)w_y^{-1}\tilde{\beta} \rangle + \Phi^+(\sgn(m)w_y^{-1}\tilde{\beta}) - \Phi^+(\sgn(m)\tilde{\beta}) \\
    &\leq -|m| - \Phi^+(\sgn(m)w_y^{-1}\tilde{\beta})+ \Phi^+(\sgn(m)w_y^{-1}\tilde{\beta}) - \Phi^+(\sgn(m)\tilde{\beta}) \\
    &= -(|m| - \Phi^+(\sgn(m)\tilde{\beta}))\ \leq\ -1.
\end{align*}

And so $v \notin \LP(y)$. Therefore $w_y^{-1} \LP(x) \cap \LP(y)$ is empty, and by Cor. \ref{demlacon}, $\ell(x*y) \neq \ell(x) + \ell(y)$.

\vbls

``$\Leftarrow$": Assume $\ell(x*y) \neq \ell(x) + \ell(y)$. Then $w_y^{-1}\LP(x) \cap \LP(y)$ must be empty. Let $v \in \LP(y)$, so that $\ell(y, v\tilde{\alpha}) \geq 0\ \forall \tilde{\alpha} \in \Phi^+$. We must also have that $v \notin w_y^{-1}\LP(x)$, so there exists $\tilde{\beta} \in \Phi^+$ such that $\ell(x, w_yv\tilde{\beta}) < 0$. 

Note that for this $\tilde{\beta}$, we can assume that $\ell(y, v\tilde{\beta}) \geq 1$, otherwise we can make an adjustment and use $vs_{\tilde{\beta}}$ instead of $v$, which would still be length positive for $y$ and reduce the number of inversions caused by $w_yv$. If no such $\tilde{\beta}$ existed, we could then repeatedly adjust until we found a $v' \in w_y^{-1}\LP(x) \cap \LP(y)$, contradicting our assumptions.

These facts give us the following inequalities.

\vbls[-0.5]

\begin{align*}
    \langle \mu_y, v\tilde{\beta} \rangle + \Phi^+(v\tilde{\beta}) - \Phi^+(w_yv\tilde{\beta}) &\geq 1,\\
    \langle \mu_x, w_yv\tilde{\beta} \rangle + \Phi^+(w_yv\tilde{\beta}) - \Phi^+(w_xw_yv\tilde{\beta}) &\leq -1.
\end{align*}

Let $\widehat{\gamma} = \sgn(m)(\tilde{\gamma} + m\pi) \in \Phi_{\textrm{af}}^+$, for some $\tilde{\gamma} \in \Phi^+$. Then for $x(\widehat{\gamma}) < 0$ and $y^{-1}(\widehat{\gamma}) < 0$, we need the following inequalities to hold.

\vbls[-1]

\begin{align*}
    |m| - \langle \mu_x, \sgn(m)\tilde{\gamma} \rangle &\leq -\Phi^+(\sgn(m)w_x\tilde{\gamma}), \\
    |m| + \langle \mu_y, \sgn(m)w_y^{-1}\tilde{\gamma} \rangle &\leq -\Phi^+(\sgn(m)w_y^{-1}\tilde{\gamma}).
\end{align*}

Let $\tilde{\gamma} = -\sgn(m)w_y v \tilde{\beta}$, with $\sgn(m)$ chosen such that $\tilde{\gamma} \in \Phi^+$. Then we can manipulate these inequalities to become the following.

\vbls[-1]

\begin{align*}
    |m| + \langle \mu_x, w_yv\tilde{\beta} \rangle &\leq \Phi^+(w_xw_yv\tilde{\beta}) - 1, \\
    |m| - \langle \mu_y, v\tilde{\beta} \rangle &\leq \Phi^+(v\tilde{\beta}) - 1.
\end{align*}

Setting $m = -\Phi^+(w_yv\tilde{\beta})$, which ensures $\tilde{\gamma} \in \Phi^+$, we then obtain inequalities which are guaranteed by the earlier length positivity conditions, and so $\widehat{\gamma} \in \Inv(x) \cap \Inv(y^{-1})$, and hence $\ell(xy) \neq \ell(x) + \ell(y)$ as required.
\vbls
\section{Examples}\label{sec:examples}

\subsection{Length Additive Pair} We compute the Demazure product of the elements $x = \varepsilon^{\delta + \Lambda_0}s_0s_1, y = \varepsilon^{4\delta + \Lambda_0}$, aligning with \cite{Muthiah-Puskas24} section 6.1, with changes made to account for differences in convention (cf. \cite{Muthiah-Puskas24} Equation 2.9). In their convention, they compute $T_{x}T_y = T_{xy}$ in the Hecke algebra for this chosen $x, y$. Hence, our goal is to show that $x*y = xy$.

We rewrite $x$ in the form $w\varepsilon^\mu$ using the action of $W$ on $\mathcal{T}$ \cite{KacIDLA}.

\vbls[-0.5]

\begin{equation}
    w_0\tau^{\lambda}(\mu_0 + m\delta + l\Lambda_0) = w_0\mu_0 + lw_0\lambda + (m - \langle \mu_0, \lambda \rangle - \frac{l}{2}\langle \lambda, \lambda \rangle)\delta + l\Lambda_0.
\end{equation}

This gives $x = s_0s_1 \varepsilon^{s_1s_0(\delta + \Lambda_0)} = \tau^{\alpha^{\vee}} \varepsilon^{-\alpha^{\vee} + \Lambda_0}$. We now compute $\LP(x)$ and $\LP(y)$ using Prop. \ref{prop:LP-algorithm}. Following the previous notation, for $\LP(x)$ we have $l = 1, k=-1$ which gives $t = -1$ and $j = 0 = \pm\frac{l-1}{2}$. Therefore $t \leq 0$, $t \leq \Phi^+(w_0\alpha) - r - 1 = -1$, and $-r \leq t \leq -1$, so we have $\LP(x) = \{\tau^{-\alpha^{\vee}}, \tau^{-\alpha^{\vee}}s_1, \tau^{-\alpha^{\vee}}s_0 \} = \{s_1s_0, s_1s_0s_1, s_1\}$ from steps 3 and 5 respectively.

For $y = \varepsilon^{4\delta + \Lambda_0}$, we have $k = 0, r = 0, l = 1, j = 0 = \pm\frac{l-1}{2}$, and so $t = 0$. We therefore have $t \leq 0$ and $t \leq \Phi^+(w_0\alpha) - r - 1 = 0$, so from step 3 we have $\LP(y) = \{\tau^{0\alpha^{\vee}}, \tau^{0\alpha^{\vee}}s_1\} = \{e, s_1\}$.

Hence $w_y\LP(y) = \{e, s_1\}$ and so we have a non-empty intersection $\LP(x) \cap w_y\LP(y) = \{s_1\}$. Hence, by Thms. \ref{bigone} and \ref{demlen}, we should expect $x*y = xy$, as computed by Muthiah and Pusk\'as. We have $M_{x,y} = \{(s_1, s_1)\}$ with $d(s_1 \Rightarrow w_ys_1) = 0$, and we compute the Demazure product using the notation of Defn. \ref{def:gen-dem-prod}:

\vbls[-1]

\begin{align*}
    x * y\ 
    =\ w_x uv^{-1} \varepsilon^{vu^{-1}\mu_x + \mu_y - v\wt(p)} 
    &= s_0 s_1 s_1 s_1^{-1} \varepsilon^{s_1 s_1^{-1} (-\alpha^{\vee} + \Lambda_0) + (4\delta + \Lambda_0) - s_1(0)} \\
    &= s_0 s_1 \varepsilon^{-\alpha^{\vee} + 4\delta + 2\Lambda_0} \\
    &= s_0 s_1 \varepsilon^{-\alpha^{\vee} + \Lambda_0} \varepsilon^{4\delta + \Lambda_0}\
    =\ xy.
\end{align*}

\subsection{Non-Length Additive Pair} Now let $x = \varepsilon^{\Lambda_0}s_0s_1s_0, y = \varepsilon^{\Lambda_0}s_0$ as per \cite{Muthiah-Puskas24}, section 6.2. In their notation, they compute the following:

\begin{equation*} T_x T_y = qT_{xy} + (q-1)T_{\pi^{2\Lambda_0 -\delta - \tilde{\alpha}_0}s_0} \end{equation*}

And so $T_x T_y \equiv -T_{\pi^{2\Lambda_0 -\delta - \tilde{\alpha}^{\vee}_0}s_0} \mod q$. Hence, following Conjecture \ref{conj:mp} and converting to our notation, we wish to show that $x * y = s_0\varepsilon^{2\Lambda_0 - \delta - \tilde{\alpha_0}^{\vee}}$. 

\vbls

First we rewrite these elements as before, giving $x = s_1\tau^{-2\alpha^{\vee}}\varepsilon^{2\alpha^{\vee} - 4\delta + \Lambda_0}$ and $y = s_1\tau^{-\alpha^{\vee}}\varepsilon^{\alpha^{\vee} - \delta + \Lambda_0}$. For $x$, we have $w_0 = s_1, r = -2, k = 1, l = 1$ giving $t = 2$ and $j = 0 = \pm\frac{l-1}{2}$. By Prop. \ref{prop:LP-algorithm}, we find that $\LP(x) = \{\tau^{2\alpha^{\vee}}, \tau^{2\alpha^{\vee}}s_1, \tau^{2\alpha^{\vee}}s_1s_0\} = \{s_0s_1s_0s_1, s_0s_1s_0, s_0s_1\}$. For $y$, we have $w_0 = s_1, r = -1, k = 1, l = 1$ giving $t = 1$ and $j = 0 = \pm\frac{l-1}{2}$, and hence $\LP(y) = \{\tau^{\alpha^{\vee}}, \tau^{\alpha^{\vee}}s_1, \tau^{\alpha^{\vee}}s_1s_0\} = \{s_0s_1, s_0, e\}$, and so $w_y\LP(y) = \{s_1, e, s_0\}$. 

\vbls

We now have $\LP(x) \cap w_y\LP(y) = \emptyset$, and so a shortest path will have length at least one. It is easy to see that $s_0s_1 \Rightarrow s_0$ is a path of length exactly one, and therefore minimal. Hence $(s_0s_1, w_y^{-1}s_0) = (s_0s_1, e) \in M_{x,y}$, with $\wt(s_0s_1 \Rightarrow s_0) = \tilde{\alpha}_1^{\vee} = \alpha^{\vee}$. Using Defn. \ref{def:gen-dem-prod} with this choice of distance minimising pair, we find that:

\vbls[-0.5]

\begin{align*}
    x*y
    = w_xuv^{-1}\varepsilon^{vu^{-1}\mu_x + \mu_y - v\wt(p)}
    &= s_1 \tau^{-2\alpha^{\vee}} \tau^{\alpha^{\vee}} e \varepsilon^{\tau^{-\alpha^{\vee}}(2\alpha^{\vee} - 4\delta + \Lambda_0) + \alpha^{\vee} - \delta + \Lambda_0 - e(\alpha^{\vee})} \\
    &= s_1\tau^{-\alpha^{\vee}}\varepsilon^{\alpha^{\vee} - 2\delta + 2\Lambda_0} \\
    &= s_0\varepsilon^{-\tilde{\alpha_0}^{\vee} - \delta + 2\Lambda_0}.
\end{align*}

This aligns with the Muthiah and Pusk\'as' result, taking the $q = 0$ term in their Hecke product. It can also be verified that $\ell(x*y) = \ell(x) + \ell(y) - 1$, an example of Thm. \ref{demlen}.

\vbls[2]
\bibliography{references}{}
\bibliographystyle{amsalpha-edited.bst}

\end{document}